\newlength{\XHeight}
\newlength{\XWidth}
\setlist[itemize,1]{leftmargin=\dimexpr 26pt-.1in}
\newtheorem{PARA}{}[section]
\newtheorem{theorem}[PARA]{Theorem}
\newtheorem{corollary}[PARA]{Corollary}
\newtheorem{lemma}[PARA]{Lemma}
\newtheorem{proposition}[PARA]{Proposition}
\newtheorem{definition}[PARA]{Definition}
\newtheorem{definition-proposition}[PARA]{Definition-Proposition}
\newtheorem{definition-lemma}[PARA]{Definition-Lemma}
\newtheorem{question}[PARA]{Question}
\theoremstyle{definition}
\newtheorem{remark}[PARA]{Remark}
\theoremstyle{theorem}
\newtheorem{example}[PARA]{Example}
\newtheorem{examples}[PARA]{Examples}
\newcommand{\para}{\begin{PARA}\rm}
\newcommand{\arap}{\end{PARA}\rm}
\newcommand{\dfn}{\begin{definition}\rm}
\newcommand{\nfd}{\end{definition}\rm}
\newcommand{\rmk}{\begin{remark}\rm}
\newcommand{\kmr}{\end{remark}\rm}
\newcommand{\xmpl}{\begin{example}\rm}
\newcommand{\lpmx}{\end{example}\rm}
\newcommand{\cA}{\mathcal{A}}
\newcommand{\cC}{\mathcal{C}}
\DeclareMathOperator{\hatotimes}{{\hat{\otimes}}}
\renewcommand{\H}{{\mathbb{H}}}
\newcommand{\N}{{\mathbb{N}}}
\newcommand{\R}{{\mathbb{R}}}
\newcommand{\Z}{{\mathbb{Z}}}
\newcommand{\coker}{\mathrm{ coker }}  
\newcommand{\colim}{\mathrm{ colim}\, }  
\newcommand{\ev}{\mathrm{ev}}
\newcommand{\Top}{\mathrm{Top}}
\DeclareMathOperator{\Int}{Int}
\newcommand{\Hom}{\mathrm{Hom}}
\newcommand{\eps}{{\varepsilon}}
\newcommand{\om}{{\omega}}
\newcommand{\Om}{{\Omega}}
\def\NABLA#1{{\mathop{\nabla\kern-.5ex\lower1ex\hbox{$#1$}}}}
\def\Nabla#1{\nabla\kern-.5ex{}_{#1}}
\def\Tabla#1{\Tilde\nabla\kern-.5ex{}_{#1}}
\renewcommand{\Tilde}{\widetilde}
\newcommand{\p}{{\partial}}
\newcommand{\la}{\langle}
\newcommand{\ra}{\rangle}
\newcommand{\wh}{\widehat}
\newcommand{\ol}{\overline}
\newcommand{\wt}{\widetilde}
\newcommand{\into}{\hookrightarrow}
\newcommand{\bk}{\mathbf{k}}
\newcommand{\pt}{\mathrm{pt}}
\definecolor{darkgreen}{rgb}{0.12, 0.3, 0.17}
\definecolor{burntorange}{rgb}{0.8, 0.33, 0.0}
\definecolor{chromeyellow}{rgb}{1.0, 0.65, 0.0}
\definecolor{darkorange}{rgb}{1.0, 0.55, 0.0}
\definecolor{flame}{rgb}{0.89, 0.35, 0.13}
\definecolor{lightgray}{rgb}{0.75, 0.75, 0.75}
\newcommand{\boldbeta}{{\boldsymbol{\beta}}}
\newcommand{\boldlambda}{{\boldsymbol{\lambda}}}
\newcommand{\boldmu}{{\boldsymbol{\mu}}}
\newcommand{\boldeta}{{\boldsymbol{\eta}}}
\newcommand{\boldeps}{{\boldsymbol{\eps}}}
\newcommand{\boldc}{{\boldsymbol{c}}}
\newcommand{\boldp}{{\boldsymbol{p}}}
\newcommand{\boldB}{{\boldsymbol{B}}}
\newcommand{\boldS}{{\boldsymbol{S}}}
\newcommand{\boldzeta}{{\boldsymbol{\zeta}}}
\newcommand{\alex}{\color{magenta}}
\begin{document}

\title{Topological Frobenius algebras}
\author{Kai Cieliebak}
\address{Universit\"at Augsburg \newline Universit\"atsstrasse 14, D-86159 Augsburg, Germany}
\email{kai.cieliebak@math.uni-augsburg.de}
\author{Alexandru Oancea}
\address{
Universit\'e de Strasbourg \newline 
Institut de recherche math\'ematique avanc\'ee, IRMA \newline
Strasbourg, France}
\email{oancea@unistra.fr}
\date{\today}


\begin{abstract}
We define the notions of unital/counital/biunital infinitesimal anti-symmetric bialgebras and coFrobenius bialgebras and discuss their algebraic properties. We also define the notion of a graded 2D open-closed TQFT. These structures arise in Rabinowitz Floer homology, loop space homology, quantum homology, and the homology of finite dimensional manifolds. The underlying vector spaces, which are typically infinite dimensional, belong to a class of topological vector spaces known as Tate vector spaces.
\end{abstract}

\maketitle

{\tableofcontents}

\section{Introduction}\label{sec:intro}

In this paper we introduce three new kinds of algebraic structures
and illustrate them with examples from
symplectic topology, string topology, and manifold topology.  

Since our main examples are infinite dimensional, we work in the category of {\em ($\Z$-)graded Tate vector spaces} $A$ over a discrete field $\bk$. Tate vector spaces are a class of topological vector spaces originally introduced by Lefschetz under the name ``locally linearly compact vector spaces''~\cite{Lefschetz-book}. They are studied extensively in~\cite{CO-Tate}, and we recall their definition and basic properties in~\S\ref{sec:Tate}. Most relevant for our purposes are the following facts:
(1) If $A$ is a graded Tate vector space, then so is its topological dual $A^\vee$ and the canonical map $A\to A^{\vee\vee}$ is an isomorphism.
(2) Graded Tate vector spaces carry two natural tensor products $\otimes^*$ and $\otimes^!$ satisfying
$$
  (A\otimes^* B)^\vee\simeq A^\vee\otimes^!B^\vee\quad\text{and}\quad (A\otimes^! B)^\vee\simeq A^\vee\otimes^*B^\vee.
$$

The first algebraic structure is that of {\em unital infinitesimal anti-sym\-metric bialgebra} $(A,\boldmu,\boldlambda,\boldeta)$ consisting of an associative product $\boldmu:A\otimes^* A\to A$ with unit $\boldeta\in A$ and a coassociative coproduct $\boldlambda:A\to A\otimes^! A$ satisfying the {\sc (unital infinitesimal relation)} and {\sc (unital anti-symmetry)} (Definition~\ref{defi:secondary-unital}). This structure appears on reduced symplectic homology and reduced loop homology~\cite{CHO-reducedSH}, where the {\sc (unital infinitesimal relation)} is the correct general form of a relation conjectured by D.~Sullivan in~\cite{Sullivan-open-closed}. Our {\sc (unital infinitesimal relation)} simultaneously generalizes the defining relation of infinitesimal bialgebras of Aguiar~\cite{Aguiar} and that of unital infinitesimal bialgebras of Loday-Ronco~\cite{Loday-Ronco}, see Remark~\ref{rmk:ncIIB-biblio} for a detailed discussion of the bibliography. Its {\em counital} version $(A,\boldmu,\boldlambda,\boldeps)$ (where $\boldeps:A\to\bk$ is the counit, see Definition~\ref{defi:secondary-counital}) appears on reduced symplectic cohomology and reduced loop cohomology, where $\boldmu$ is a generalization of the loop cohomology product from~\cite{Goresky-Hingston}. Its {\em biunital} version $(A,\boldmu,\boldlambda,\boldeta,\boldeps)$ (Definition~\ref{defi:secondary-biunital}) appears on Rabinowitz Floer homology and Rabinowitz loop homology~\cite{CHO-PD} as a consequence of the biunital coFrobenius bialgebra structure that follows.  

The second algebraic structure is that of
\emph{biunital coFrobenius bialgebra} ($A,\boldmu,\boldlambda,\boldeta,\boldeps)$ consisting of an associative product $\boldmu$ with unit $\boldeta$ and a coassociative coproduct $\boldlambda$ with counit $\boldeps$ satisfying the {\sc (biunital coFrobenius relation)} and {\sc (symmetry)} (Definition~\ref{defi:biunital-coFrobenius-bialgebra}). This is a graded and infinite dimensional version of the notion of a symmetric Frobenius algebra (see~\cite{Abrams,Kock} for background on symmetric Frobenius algebras and their equivalence to $1+1$ dimensional TQFTs). This structure appears on the homology of a closed oriented manifold (\S\ref{sec:manifolds}), on quantum homology of a closed symplectic manifold (\S\ref{sec:quantum}), on Rabinowitz Floer homology of a Liouville fillable contact manifold, and on Rabinowitz loop homology of a closed manifold~\cite{CHO-PD}. The case of even dimensional closed oriented manifolds and that of closed symplectic manifolds is classical, our contribution in finite dimension is to define the graded structure and to give new examples if the dimension is odd, and also, more importantly, to introduce the formalism of Tate vector spaces which is able to accommodate our infinite dimensional examples.

We give several equivalent characterizations of a biunital coFrobenius bialgebra structure, show that it induces that of a biunital infinitesimal anti-symmetric bialgebra (Proposition~\ref{prop:biunital-coFrob-bialg}), and prove that it is isomorphic to the induced biunital coFrobenius bialgebra on its dual (Theorem~\ref{thm:algebraic_Poincare_duality}). This is an algebraic version of the Poincar\'e duality theorem for Rabinowitz Floer homology in~\cite{CHO-PD}. We also prove that biunital coFrobenius bialgebras  can be equivalently characterized as graded symmetric Frobenius algebras (Proposition~\ref{prop:cyclic-graded-algebra}).

The third algebraic structure is that of
{\em graded 2D open-closed TQFT}, which we define in~\S\ref{sec:TQFT} building on the notion of a biunital coFrobenius bialgebra. This structure appears on the cohomology of closed finite dimensional manifolds/submanifolds (\S\ref{sec:manifolds}), on quantum homology of monotone symplectic manifolds/Lagrangian submanifolds (\S\ref{sec:quantum}), and on Rabinowitz Floer homology of Liouville fillable contact manifolds/exact Lagrangian fillable Legendrian submanifolds~\cite{CHO-PD}. The ungraded case has been extensively studied, and our contribution is again to define the graded structure and to give new examples in odd dimensions and in infinite dimensions. 
One particular feature of a graded 2D open-closed TQFT is that the so-called \emph{Cardy condition} only makes sense when the degree of the coproduct for closed strings is twice the degree of the coproduct for open strings, see Definition~\ref{defi:gradedTQFT}. In the case of a half-dimensional closed oriented submanifold $Z$ of a closed oriented manifold $M$ the Cardy condition is equivalent to the equality of the Euler classes of the normal bundle and of the tangent bundle of $Z$. This condition can be interpreted as an algebraic counterpart of being Lagrangian in manifold topology.  

To our knowledge, the notion of a graded open-closed TQFT has not been considered before in the literature. This is somewhat surprising because, as explained in~\S\ref{sec:manifolds}, this structure appears naturally on the cohomology rings $(H^*M,H^*Z)$ of a closed oriented manifold $M$ with closed oriented submanifold $Z\subset M$. The graded open-closed TQFT in 
Theorem~\ref{thm:open-closed-RFH} associated to Rabinowitz Floer homology of a Liouville fillable contact manifold $\p V$ with exact oriented Lagrangian fillable Legendrian submanifold $\p L\subset \p V$ extends the graded open-closed TQFT on the manifold pair $\p L\subset\p V$. We view this as the contact analogue of the fact that the open-closed TQFT in quantum homology of a closed monotone symplectic manifold $M$ with closed oriented monotone Lagrangian submanifold $L$ extends the open-closed TQFT on the manifold pair $L\subset M$.

TQFT-like structures have appeared before in symplectic topology. An early landmark reference written from the perspective of closed Gromov-Witten invariants is Manin's book~\cite{Manin-book}. M.~Schwarz has constructed a Frobenius algebra structure on the Floer homology of a closed symplectic manifold~\cite{Schwarz95}, whereas P.~Seidel~\cite{Seidel07} and A.~Ritter~\cite{Ritter} have constructed a noncompact open-closed TQFT structure on the symplectic homology and wrapped Floer homology of a Liouville domain and an exact Lagrangian submanifold with Legendrian boundary. Note that the first case is finite dimensional, whereas the infinite dimensional second case carries only part of the TQFT operations. An open-closed TQFT structure on quantum homology of monotone Lagrangians is implicit in the work of Biran-Cornea~\cite{Biran-Cornea-rigidity-uniruling,Biran-Cornea-Lagrangian-topology}, as we explain in~\S\ref{sec:quantum}. Our Theorem~\ref{thm:open-closed-RFH} provides a large class of infinite dimensional examples carrying full (graded) open-closed TQFT structures. We refer to the paper of Moore-Segal~\cite{Moore-Segal} for a comprehensive list of references on open-closed TQFT structures outside of symplectic topology. In particular, we consider it interesting to study our structure in relation with the work of  Costello~\cite{Costello}, Godin~\cite{Godin} and Wahl-Westerland~\cite{Wahl-Westerland}. 

The paper is organized as follows. In~\S\ref{sec:Tate} we recall the basic facts about Tate vector spaces and extend them to the graded setting. 
In~\S\ref{sec:inf-bialg}--\S\ref{sec:TQFT} we introduce and discuss the algebraic structures in the setting of graded Tate vector spaces. 
In~\S\ref{sec:manifolds}--\S\ref{sec:RFH} we explain how these structures appear in manifold homology, quantum homology, symplectic homology, and Rabinowitz Floer homology. 
In~\S\ref{sec:spheres} we illustrate them in the example of loop space homology of odd-dimensional spheres.
The Appendix contains a discussion of signs for compatibility of maps with operations in a graded setting.

{\bf Acknowledgements}.
This paper is a split-off from our collaboration with Nancy Hingston on Poincar\'e duality. Without her far reaching vision this could not have come to being. We would like to thank Nathalie Wahl and Jonathan Laurent Clivio for their explanations on signs in TQFT. We have also benefited from discussions with Pavel Safronov. We would also like to thank the Institute for Advanced Study, Princeton, for its support in the academic year 2021/22, during which much of this work was completed. 
A.O. was partially funded by ANR grants ENUMGEOM 18-CE40-0009 and COSY 21-CE40-0002, and by a Fellowship of the University of Strasbourg Institute for Advanced Study within the French national programme ``Investment for the future" (IdEx-Unistra).

\section{Graded Tate vector spaces}\label{sec:Tate}

In this section we introduce graded Tate vector spaces, their duality theory, and their tensor products. We base our presentation on~\cite{CO-Tate}.

\subsection*{Tate vector spaces}
These spaces
were introduced by Lefschetz~\cite[II]{Lefschetz-book} under the name of {\em locally linearly compact vector spaces}, and were from the onset designed as linear analogues of locally compact groups. They were used by Tate~\cite{Tate68} in his treatment of residues of differentials on curves, and later revived by Beilinson-Feigin-Mazur~\cite{BFM91} and Beilinson-Drinfeld~\cite{BD04} under the name of {\em Tate vector spaces} with a different, though equivalent, definition. See also Rojas~\cite{Rojas}. We will use the latter terminology.

We fix a discrete field $\bk$, i.e., a field equipped with the discrete topology. 
A {\em linearly topologized vector space} is a $\bk$-vector space $A$
with a Hausdorff topology which is translation invariant and has a
basis of open neighbourhoods of $0$ consisting of linear subspaces. 
Linearly topologized vector spaces form a category $\Top$ whose morphisms are
continuous linear maps.
The kernel and cokernel of a morphism $f:A\to B$ are defined as
$$
  \ker(f) = \{v\in A\mid f(v)=0\}, \qquad 
  \coker(f) = B/\ol{f(A)}. 
$$
In particular, quotients in $\Top$ should always be taken by closed
linear subspaces. Note that an open linear subspace $U$ is also closed
because its complement can be written as a union of translates of $U$.

A special and important class of linearly topologized vector spaces are the discrete ones, i.e., those for which $\{0\}$ is an open set. The continuity condition is automatically satisfied for linear maps between discrete vector spaces, and topological linear algebra coincides in this context with linear algebra. However, away from discrete vector spaces the two notions differ and this difference is essential for our purposes.  

\begin{definition} The {\em completion} of a linearly topologized vector space $A$ is
$$
   \wh A := \lim_{U\in\mathcal{U}} A/U,
$$
where $\mathcal{U}$ denotes the collection of open linear subspaces in
$A$. Here the inverse limit is topologized as a subset of the product $\prod_{U\in\mathcal{U}} A/U$, and each $A/U$ is discrete with respect to the quotient topology.
\end{definition}

The canonical map $A \to \wh A$ is always injective and its image is dense in $\wh A$.
The space $A$ is called {\em complete} if the canonical map $A \to \wh A$ is an
isomorphism. Any discrete vector space is complete because the inverse system
$A/U$, $U\in\mathcal{U}$, has a maximal element equal to $A$.

\begin{definition}[Beilinson-Feigin-Mazur]  
A linear subspace $L\subset A$ is called \emph{linearly bounded} if $\dim(L/L\cap U)<\infty$ for each open
  linear subspace $U\subset A$. 
\end{definition}

\begin{definition} \label{defi:Tate}
A linearly topologized vector space is \emph{linearly compact} if it is complete and linearly bounded. 
It is a \emph{Tate vector space} if it is complete and admits an open linearly compact subspace.
\end{definition}

\begin{examples} \label{ex:Tate}
(i) A linearly topologized vector space is finite dimensional if and only if it is discrete and linearly compact. 

(ii) The vector space $\bk[[t]]$ of formal power series, with a basis of neighborhoods of $0$ given by $t^n\bk[[t]]$, $n\ge 0$, is linearly compact.

(iii) The vector space $\bk[t^{-1},t]]$ of Laurent power series, with a basis of neighborhoods of $0$ given by $t^n\bk[[t]]$, $n\in\Z$, is Tate. It is neither discrete nor linearly compact. 
\end{examples}

\begin{proposition}
A linearly topologized vector space $A$ is Tate iff it is a topological direct sum $A= L\oplus D$, with $L$ linearly compact and $D$ discrete. 
\qed
\end{proposition}

\begin{remark}
Lefschetz gives in~\cite[(II.27.1)]{Lefschetz-book} a different equivalent definition of linear compactness in terms of the finite intersection property for linear varieties. His definition resonates well with other instances of compactness from general topology and provides by analogy a good intuition for linearly topologized vector spaces. The equivalence of the Lefschetz definition with Definition~\ref{defi:Tate} is proved in~\cite{CO-Tate}. The advantage of the latter is that it is more effective, e.g., it applies in a straightforward way to the examples~\ref{ex:Tate}.      
\end{remark}

\subsection*{Duality}
For linearly topologized vector spaces $A,B$ we denote by $\Hom(A,B)$ the space of continuous linear maps $A\to B$. Following Lefschetz~\cite[(II.28.1)]{Lefschetz-book}, we equip it with the {\em compact-open topology}, i.e., the linear topology whose neighbourhood basis of the origin is given by the linear subspaces $\{f\in \Hom(A,B)\mid f(L)\subset V\}$ for $L\subset A$ linearly compact and $V\subset B$ linear open. 
An important special case of this is the topological dual $A^\vee=\Hom(A,\bk)$, whose neighbourhood
basis of the origin is given by the linear subspaces
$$
   L^\perp=\{\alpha\in A^\vee\mid \alpha|_L=0\},\qquad
   L\subset A \text{ linearly compact}.
$$ 
The association $A\mapsto A^\vee$ is functorial: a continuous linear map $f:A\to B$ induces a continuous linear map $f^\vee:B^\vee\to A^\vee$, $f^\vee\beta=\beta\circ f$. 

\begin{theorem}[Lefschetz-Tate duality, Lefschetz~{\cite[(II.28.2-29.1)]{Lefschetz-book}}, Rojas~{\cite[Theorem~1.25]{Rojas}}]\label{thm:Tate-duality}\quad 

(a) If $A$ is discrete, then $A^\vee$ is linearly compact. 

(b) If $A$ is linearly compact, then $A^\vee$ is discrete.
  
(c) If $A$ is locally linearly compact, then so is $A^\vee$ and the canonical map $A\to
  A^{\vee\vee}$ is a topological isomorphism. \qed
\end{theorem}

The last reflexivity property of Tate vector spaces plays a fundamental role in phrasing the axioms of a TQFT in~\S\ref{sec:TQFT}.

\subsection*{Tensor products}
Let $A,B$ be linearly topologized vector spaces over the discrete field $\bk$. Beilinson describes in~\cite[\S1.1]{Beilinson} two topologies on the algebraic tensor product $A\otimes B$, called the \emph{$*$ topology} and the \emph{$!$ topology}. We will subsequently consider the completed tensor products with respect to these topologies.\footnote{Beilinson also describes in~\cite[\S1.1]{Beilinson} an intermediate topology, called the $^\leftarrow$ topology, which we will not need here. We refer to~\cite{CO-Tate} for further details.} 

\begin{definition}\label{def:*top-alex}
A linear subspace $Q\subset A\otimes B$ is open in the {\em $*$
  topology} iff it satisfies the following conditions:
\begin{enumerate}[label=(\roman*)]
\item there exist open linear subspaces $U\subset A$ and $V\subset B$
  such that $U\otimes V\subset Q$;  
\item for each $a\in A$ there exists an open linear subspace $V\subset
  B$ such that $a\otimes V\subset Q$;
\item for each $b\in B$ there exists an open linear subspace $U\subset
  A$ such that $U\otimes b\subset Q$.
\end{enumerate}
\end{definition}

\begin{definition}\label{def:!top}
A linear subspace $Q\subset A\otimes B$ is open in the {\em $!$ topology} iff there exists open linear subspaces $U\subset A$ and $V\subset B$ such that $U\otimes B + A\otimes V\subset Q$.
\end{definition}

We denote $A\otimes^* B$ and $A\otimes^!B$ the tensor product $A\otimes B$ endowed with the $*$ topology, respectively the $!$ topology. We denote $A\hatotimes^*B$ and $A\hatotimes^!B$ their respective completions.

The $!$ topology is coarser than the $*$ topology. In particular, the identity map is continuous as a map $A\otimes^* B\to A\otimes^! B$, and it induces a continuous linear map 
$$
A\hatotimes^* B\to A\hatotimes^! B.
$$
The algebraic tensor product $A\otimes B$ (devoid of topology) is endowed with a canonical bilinear map $\pi:A\times B\to A\otimes B$ and is characterized by the following universal property: for any vector space $C$ and any bilinear map $\phi:A\times B\to C$ there is a unique linear map $\phi^\otimes:A\otimes B\to C$ such that $\phi=\phi^\otimes\circ\pi$. 
$$
\xymatrix{
A\times B\ar[r]^-\phi \ar[d]_-\pi & C \\
A\otimes B\ar[ur]_-{\phi^\otimes} &  
}
$$ 
The $*$ topology on $A\otimes B$ is designed to render this correspondence topological:

\begin{lemma}\label{lem:*topology-universal}
The $*$ topology on $A\otimes B$ is uniquely characterized by any one of the following two conditions. 

(a) It is the finest linear topology such that the canonical bilinear map $\pi:A\times B\to A\otimes B$
is continuous. 

(b) For each linearly topologized space $C$ the assignment $\phi^\otimes\mapsto \phi=\phi^\otimes\circ\pi$ defines a linear bijection
\begin{equation}\label{eq:lin-bilin}
  \pi^*:\Hom(A\otimes^*B,C) \to B(A\times B,C),
\end{equation}
with $B(A\times B,C)$ the space of continuous bilinear maps $A\times B\to C$.
\qed
\end{lemma}

The $!$ topology has a somewhat dual characterization. 

\begin{proposition}\label{prop:characterization!topologyTate}
If $A$ and $B$ are Tate, then the $!$ topology is the coarsest topology on $A\otimes B$ such that the linear map 
$$
\beta^\otimes:A\otimes B\to B(A^\vee\times B^\vee,\bk),\qquad a\otimes b \mapsto \big( (\varphi,\psi)\mapsto \varphi(a)\psi(b)\big)
$$
is continuous. \qed
\end{proposition}

Next, we collect the main properties of the two completed tensor products. We begin with commutativity and associativity. 

\begin{proposition} \label{prop:comm-ass}
For linearly topologized vector spaces $A,B,C$ we have canonical isomorphisms
\begin{gather*}
  A\hatotimes^*B\simeq B\hatotimes^*A, \qquad
  (A\hatotimes^*B)\hatotimes^*C \simeq A\hatotimes^*(B\hatotimes^*C), \cr
  A\hatotimes^!B\simeq B\hatotimes^!A, \qquad
  (A\hatotimes^!B)\hatotimes^!C \simeq A\hatotimes^!(B\hatotimes^!C).
\end{gather*}
Moreover, the identity induces a continuous linear map
$$
   A\hatotimes^*(B\hatotimes^!C) \to (A\hatotimes^*B)\hatotimes^!C.
$$
\qed
\end{proposition}

\begin{proposition} \label{prop:tensor_discrete}
(a) If $A$ and $B$ are discrete, then both topologies on $A\otimes B$ coincide and 
$$
A\otimes B = A\hatotimes^*B=A\hatotimes^!B  = \Hom(B^\vee,A) = \Hom(A^\vee,B)
$$
is discrete.

(b) If $A$ and $B$ are linearly compact, then both topologies on $A\otimes B$ coincide and 
$$
A\hatotimes^*B=A\hatotimes^!B=\Hom(B^\vee,A)=\Hom(A^\vee,B)
$$
is linearly compact. \qed
\end{proposition}

Note that $A\otimes^*B$ is in general not complete, hence not linearly compact. For example $\bk[[t]]\otimes^*\bk[[s]]$ is not complete, but $\bk[[t]\hatotimes^*\bk[[s]]=\bk[[t,s]]$ is complete and linearly compact. 

\begin{proposition}[Lefschetz-Tate duality with monoidal structure~I]  \label{prop:duality*!}
Assume that $A$ and $B$ are both linearly compact, or both discrete. Then we have topological isomorphisms
$$
(A\hatotimes^* B)^\vee\simeq A^\vee\hatotimes^! B^\vee,\qquad (A\hatotimes^! B)^\vee\simeq A^\vee\hatotimes^*B^\vee.
$$
\qed
\end{proposition}

The vector spaces involved in this statement are either discrete or linearly compact. Remarkably, a much more general result holds. 

\begin{theorem}[Lefschetz-Tate duality with monoidal structure~II]   \label{thm:duality*!-Tate}
For Tate vector spaces $A,B$ we have topological isomorphisms
$$
(A\hatotimes^* B)^\vee\simeq A^\vee\hatotimes^! B^\vee,\qquad (A\hatotimes^! B)^\vee\simeq A^\vee\hatotimes^*B^\vee.
$$
\qed
\end{theorem}

\begin{remark}
Upgrading Proposition~\ref{prop:duality*!} to Theorem~\ref{thm:duality*!-Tate} requires to work outside the category of Tate spaces. The fundamental reason is that, if $A$ and $B$ are Tate, then $A\hatotimes^! B\simeq \Hom(A^\vee,B)$ need not be Tate, and $A\hatotimes^* B$ need not be Tate either, cf.~\cite{CO-Tate}. The correct setup is to consider the categories of ind-linearly compact spaces and pro-discrete spaces, which are respectively preserved under the $\hatotimes^*$ and the $\hatotimes^!$ tensor products, and which are monoidal dual to each other~\cite{CO-Tate,Esposito-Penkov22,Esposito-Penkov23}.
\end{remark}

\begin{remark}
The * and ! tensor product topologies are analogues of the projective and injective tensor product topologies for Banach spaces, see~\cite[\S2-3]{Ryan}. 
\end{remark}

\subsection*{Graded Tate vector spaces}
A {\em ($\Z$-)graded linearly topologized vector space} is a direct sum $A=\bigoplus_{i\in\Z}A_i$ of linearly topologized vector spaces. It is a \emph{graded Tate vector space} if $A_i$ is Tate for each $i\in\Z$. The morphism spaces in the category of graded linearly topologized vector spaces are by definition  
\begin{align*}
  \Hom(A,B) &= \bigoplus_{k\in\Z}\Hom_k(A,B),\quad
  \Hom_k(A,B) &= \prod_i \Hom(A_i,B_{i+k}).
\end{align*}
The degree of a homogeneous element $a\in A$ is denoted $|a|$. A linear map $\phi:A\to B$ is \emph{homogeneous of degree $k$} if $|\phi(a)|=|a|+k$ for homogeneous elements $a\in A$. A homogeneous map of degree $k$ is therefore a collection $\phi=(\phi_i)$ with $\phi_i\in\Hom(A_i,B_{i+k})$. The degree of a homogeneous linear map $\phi$ is denoted $|\phi|$. 

The \emph{dual} of a graded linearly topologized vector space is defined to be  
$$
  A^\vee = \bigoplus_{i\in\Z}A^\vee_i,\qquad A^\vee_i=\Hom(A_{-i},\bk).
$$
The $*$ and $!$ tensor products of graded linearly topologized vector spaces are the graded linearly topologized vector spaces 
\begin{gather*}
A\otimes^* B = \bigoplus_{k\in\Z}(A\otimes^* B)_k,\qquad 
  (A\otimes^* B)_k = \bigoplus_{i+j=k}A_i\hatotimes^* B_j, \cr
A\otimes^! B = \bigoplus_{k\in\Z}(A\otimes^! B)_k,\qquad 
  (A\otimes^! B)_k = \prod_{i+j=k}A_i\hatotimes^! B_j.
\end{gather*}
Note the degree-wise completions, and the appearance of a direct sum in $\otimes^*$ versus a direct product in $\otimes^!$. 

\begin{remark}\label{rmk:discrete-infinite-degrees}
While the * and ! tensor products coincide for ungraded discrete vector spaces, they may be different for graded discrete vector spaces. A typical example is the following: let $A$ and $B$ be finite dimensional in each degree, but supported in infinitely many positive and negative degrees. In this case $(A\otimes^* B)_k=\oplus_{i+j=k}A_i\otimes B_j$ and $(A\otimes^! B)_k=\prod_{i+j=k}A_i\otimes B_j$. This example will be relevant in~\S\ref{sec:spheres}.  
\end{remark}

The results of the previous three subsections carry over in an obvious way to the graded setting. 
To avoid duplications, we will write $\heartsuit$ for any of the symbols $*$ and $!$, and $\bar\heartsuit$ for the opposite symbol $!$ or $*$. For example, in this notation the graded analogue of Proposition~\ref{prop:comm-ass} reads

\begin{proposition} \label{prop:comm-ass-graded}
For graded linearly topologized vector spaces $A,B,C$ and $\heartsuit=*$ or $!$ we have canonical isomorphisms
\begin{gather*}
  A\otimes^\heartsuit B\simeq B\otimes^\heartsuit A, \qquad
  (A\otimes^\heartsuit B)\otimes^\heartsuit C \simeq A\otimes^\heartsuit(B\otimes^\heartsuit C).
\end{gather*}
Moreover, the identity induces a continuous linear map
$$
   A\otimes^*(B\otimes^!C) \to (A\otimes^*B)\otimes^!C.
$$
\qed
\end{proposition}

The graded analogue of Theorem~\ref{thm:duality*!-Tate} (derived using the fact that the dual of a topological direct sum is a topological direct product and vice versa~\cite{CO-Tate}) reads

\begin{theorem}[Graded Lefschetz-Tate duality with monoidal structure]   \label{thm:duality*!-Tate-graded}
For graded Tate vector spaces $A,B$ we have topological isomorphisms
$$
\iota:A^\vee\otimes^\heartsuit B^\vee\stackrel\simeq\longrightarrow (A\otimes^{\bar\heartsuit} B)^\vee, \quad \heartsuit=* \mbox{ or } !
$$
\qed
\end{theorem}

We will usually specify the kind of tensor product that we consider for vector spaces, e.g., $A\otimes^*B$, or $A\otimes^\heartsuit B$. In contrast, whenever we write a tensor product of maps we will simply use the symbol $\otimes$ without further decorations since the domain and target can be inferred from the context, e.g., $f\otimes g$.

\subsection*{Conventions} 
We describe in this section sign conventions in the context of graded linearly topologized vector spaces $A,B,C$. 

0. {\it Identity map.} The identity map of $A$ is denoted $1:A\to A$. 

1. {\it Twist.} The twist $\tau:A\otimes^\heartsuit A\to A\otimes^\heartsuit A$ is induced by the componentwise graded twist $\tau(a\otimes b)=(-1)^{|a||b|}b\otimes a$. 

2. {\it Product.} We call \emph{product} a continuous linear map 
$$
\boldmu:A\otimes^* A\to A.
$$ 
The degree of $\boldmu$ is denoted $|\boldmu|$. 
We say that $\boldmu$ is \emph{commutative} if 
$$
\boldmu \tau = (-1)^{|\boldmu|}\boldmu.
$$
We say that $\boldmu$ is \emph{associative} if 
$$
\boldmu(\boldmu\otimes 1) = (-1)^{|\boldmu|}\boldmu(1\otimes \boldmu).
$$
An element $\eta\in A$ is called a \emph{unit} for $\boldmu$ if
$$
(-1)^{|\boldmu|} \boldmu(\boldeta\otimes 1) = 1 = \boldmu(1\otimes \boldeta).
$$

3. {\it Coproduct.} We call \emph{coproduct} a continuous linear map 
$$
\boldlambda:A\to A\otimes^! A.
$$ 
The degree of $\boldlambda$ is denoted $|\boldlambda|$. 
We say that $\boldlambda$ is \emph{cocommutative} if 
$$
\tau \boldlambda = (-1)^{|\boldlambda|}\boldlambda. 
$$
We say that $\boldlambda$ is \emph{coassociative} if 
$$
(\boldlambda\otimes 1)\boldlambda = (-1)^{|\boldlambda|}(1\otimes \boldlambda)\boldlambda. 
$$
A map $\boldeps:A\to \bk$ is called a \emph{counit} for $\boldlambda$ if 
$$
(\boldeps\otimes 1)\boldlambda = 1 = (-1)^{|\boldlambda|}(1\otimes\boldeps)\boldlambda.
$$

4. {\it Duality.} We have a canonical degree $0$ linear continuous pairing 
$$
\langle \cdot,\cdot\rangle :A^\vee\otimes^* A\to \bk,\qquad \langle f,a\rangle = f(a).
$$ 
Given a graded map $A\stackrel\varphi\longrightarrow B$, the graded \emph{dual map} $B^\vee\stackrel{\varphi^\vee}\longrightarrow A^\vee$ is defined by $\langle \varphi^\vee g, a \rangle = (-1)^{|g|  |\varphi|}\langle g, \varphi(a)\rangle$. 
Given graded maps $A\stackrel \varphi\longrightarrow B \stackrel \psi\longrightarrow C$, we have 
$$
(\psi\circ \varphi)^\vee = (-1)^{|\varphi|  |\psi|}\varphi^\vee \circ \psi^\vee. 
$$
The canonical isomorphism 
$$
\iota:A^\vee\otimes^\heartsuit B^\vee\stackrel\simeq\longrightarrow (A\otimes^{\bar\heartsuit} B)^\vee
$$ 
is  
determined by the degree-wise map $(f\otimes g)(a\otimes b)= (-1)^{|g|  |a|} f(a)g(b)$. 
Note that $\iota\tau=\tau^\vee\iota:A^\vee\otimes^\heartsuit A^\vee\to(A\otimes^{\bar\heartsuit} A)^\vee$. 

Given graded maps $A\stackrel \varphi \longrightarrow A'$ and $B\stackrel \psi\longrightarrow B'$, the graded map $\varphi\otimes \psi:A\otimes^\heartsuit B\to A'\otimes^\heartsuit B'$ is defined by $(\varphi\otimes \psi)(a\otimes b) = (-1)^{|\psi|  |a|} \varphi(a)\otimes \psi(b)$. 
We have 
$$
(\varphi\otimes \psi)^\vee = \varphi^\vee\otimes \psi^\vee. 
$$
In this formula, if the domain of $\varphi\otimes\psi$ is $A\otimes^\heartsuit B$, then the domain of $\varphi^\vee\otimes\psi^\vee$ is $A^\vee\otimes^{\bar\heartsuit} B^\vee$. 
The formula could be written more precisely $\iota^{-1}(\varphi\otimes \psi)^\vee\iota =  \varphi^\vee\otimes \psi^\vee$, but we omit for readability the mention of the canonical identification $\iota$.   

5. {\it Dualizing products and coproducts.} 
\renewcommand{\theenumi}{\roman{enumi}}
\begin{enumerate}
\item Given a (coassociative) (cocommutative) (counital) coproduct $\boldlambda:A\to A\otimes^! A$, the map $\boldlambda^\vee=A^\vee\otimes^* A^\vee\to A^\vee$ obtained by precomposing the dual of $\boldlambda$ with the canonical map $\iota$ is an (associative) (commutative) (unital) product on $A^\vee$. If $\boldeps$ is the counit of $\boldlambda$ then $\boldeps^\vee$ is the unit of $\boldlambda^\vee$. 
\item
Given an (associative) (commutative) (unital) product $\boldmu:A\otimes^* A\to A$, the map $\boldmu^\vee:A^\vee\to A^\vee\otimes^! A^\vee$ obtained by postcomposing the dual of $\boldmu$ with the inverse $\iota^{-1}$ is a (coassociative) (cocommutative) (counital) coproduct on $A^\vee$. If $\boldeta$ is the unit of $\boldmu$ then $\boldeta^\vee$ is the counit of $\boldmu^\vee$. 
\item
We have 
$$
\boldlambda^{\vee\vee}=\boldlambda,\qquad \boldmu^{\vee\vee}=\boldmu
$$
via the canonical isomorphism $A\to A^{\vee\vee}$, $a\mapsto a^{\vee\vee}$ defined on $f\in A^\vee$ by $\la a^{\vee\vee},f\ra=(-1)^{|a|}f(a)$.
\end{enumerate}

\begin{remark}
That products are defined on the $*$ tensor product is natural in view of Lemma~\ref{lem:*topology-universal} and the fact that they model bilinear 2-to-1 operations. That coproducts take values in the $!$ tensor product is motivated by Theorem~\ref{thm:duality*!-Tate} and the previous dualization scheme: any coproduct is dual to a product defined on the $*$ tensor product of the duals.    
\end{remark}

6. {\it Shift.} We denote $A[1]$ the graded linearly topologized vector space given by $A[1]_i=A_{i+1}$. We denote 
$$
s:A\longrightarrow A[1],\qquad \omega:A[1]\to A
$$
the canonical maps of degrees $|s|=-1$, $|\om|=1$ induced by the identity on $A$. These maps are inverse to each other. 

Given a linear map $\varphi:A^{\otimes^* k}\to A^{\otimes^! \ell}$, $k,\ell\ge 0$, we denote its \emph{shift} by 
$$
\overline \varphi : A[1]^{\otimes^* k}\to A[1]^{\otimes^! \ell}, \qquad \overline \varphi = s^{\otimes \ell}\circ \varphi\circ \omega^{\otimes k}.
$$

The notions of associativity, commutativity, coassociativity, cocommutativity are invariant under shifts. 

The existence of a unit and that of a counit is a property that is also invariant under shift: 

(i) If $\boldeta\in A$ is a unit for $\boldmu$, then 
$$
\overline \boldeta = (-1)^{|\mu|}s\boldeta
$$
is a unit for $\overline \boldmu$. 

(ii) If $\boldeps:A\to \bk$ is a counit for $\boldlambda$, then 
$$
\overline \boldeps = (-1)^{|\boldlambda|}\boldeps \omega
$$
is a counit for $\overline \boldlambda$. 

7. {\it Shifts and duals.} The shift and the dual commute up to a sign given by the degree of the operation. If $\boldmu$ is a product with unit $\boldeta$ and $\boldlambda$ is a coproduct with counit $\boldeps$, we have 
$$
{\overline \boldmu}^\vee = (-1)^{|\boldmu|} \overline{\boldmu^\vee},\qquad {\overline\boldeta}^\vee = (-1)^{|\boldeta|}\overline{\boldeta^\vee},
$$ 
$$
{\overline\boldlambda}^\vee = (-1)^{|\boldlambda|}\overline{\boldlambda^\vee},\qquad
{\overline \boldeps}^\vee=(-1)^{|\boldeps|}\overline{\boldeps^\vee}.
$$
Here we use that $s^\vee=\om$ and $\om^\vee=s$ under the canonical identification $A[1]^\vee=A^\vee[-1]$.

\section{Infinitesimal anti-symmetric bialgebras}\label{sec:inf-bialg}

\begin{definition} \label{defi:secondary-unital} 
A \emph{unital infinitesimal anti-symmetric bialgebra} 
is a graded Tate vector space $A$ endowed with a product $\boldmu:A\otimes^* A\to A$, a coproduct $\boldlambda:A\to A\otimes^! A$ and an element $\boldeta\in A$ which satisfy the following relations:
\begin{itemize}
\item {\sc (unit)} the element $\boldeta$ is the unit for the product $\boldmu$.
\item {\sc (associativity)} the product $\boldmu$ is associative. 
\item {\sc (coassociativity)} the coproduct $\boldlambda$ is coassociative. 
\item {\sc (unital infinitesimal relation)} 
\begin{align*}
\boldlambda\boldmu = (-1)^{|\boldlambda||\boldmu|} ((1\otimes\boldmu)(\boldlambda\otimes 1) & + (\boldmu\otimes 1)(1\otimes\boldlambda)) \\
& - (-1)^{|\boldmu|}(\boldmu\otimes \boldmu)(1\otimes \boldlambda\boldeta\otimes 1).
\end{align*}
\item {\sc (unital anti-symmetry)} 
\begin{align*}
&(-1)^{|\boldmu| (|\boldlambda|+1)} (1\otimes\boldmu) (\tau\boldlambda\otimes 1)  
+ (-1)^{|\boldlambda| (|\boldmu|+1)}(\boldmu\tau\otimes 1)(1\otimes\boldlambda) \\
& \hspace{5cm}- (-1)^{|\boldlambda|+|\boldmu|}(\boldmu\tau\otimes \boldmu)(1\otimes\boldlambda\boldeta\otimes 1) \\
&= (-1)^{|\boldlambda| |\boldmu|} \tau(1\otimes\boldmu\tau)(\boldlambda\otimes 1) 
- (-1)^{(|\boldlambda|+1)(|\boldmu|+1)} \tau (\boldmu\otimes 1)(1\otimes\tau\boldlambda) \\ 
& \hspace{5cm} - (-1)^{|\boldmu|}\tau(\boldmu\otimes \boldmu\tau)(1\otimes\boldlambda\boldeta\otimes 1).
\end{align*}
\end{itemize}
\end{definition}

The {\sc (unital infinitesimal)} and {\sc (unital anti-symmetry)} relations are to be understood as relations between maps $A\otimes^* A\to A\otimes^! A$. 
For example, the map $(\boldmu\otimes 1)(1\otimes\boldlambda)$ is the composition
$$
  A\otimes^* A \stackrel{1\otimes\boldlambda}\longrightarrow A\otimes^*(A\otimes^!A) \longrightarrow (A\otimes^*A)\otimes^!A \stackrel{\boldmu\otimes 1}\longrightarrow A\otimes^! A,
$$
where the middle map comes from Proposition~\ref{prop:comm-ass-graded}.

Evaluating the {\sc (unital anti-symmetry)} relation on $\boldeta\otimes \boldeta$ we obtain in particular
$$
\tau \boldlambda\boldeta = (-1)^{|\boldlambda|}\boldlambda\boldeta. 
$$

\begin{remark}\label{rmk:double} The product $\boldmu$ and the coproduct $\boldlambda$ define a product $\boldsymbol{m}$ on $A\oplus A^\vee$ such that $(A,\boldmu)$ and $(A^\vee,\boldlambda^\vee)$ embed as subalgebras into $(A\oplus A^\vee,\boldsymbol{m})$, much like in~\cite{CO-cones}.
Assuming that $\boldlambda\boldeta=0$, it is proved in~\cite[Appendix~A]{LO} that the above conditions are equivalent to associativity of the product $\boldsymbol{m}$ and the fact that the latter is unital with unit $(\boldeta,0)$. Zhelyabin~\cite{Zhelyabin1997} proves this in the case where $\boldmu$ and $\boldlambda$ are even. In Zhelyabin's terminology, this is the notion of a graded associative D-bialgebra, or graded associative bialgebra in the sense of Drinfeld. The algebra $(A\oplus A^\vee,\boldsymbol{m})$ is called the \emph{Drinfeld double}.
\end{remark}

The notion of a unital infinitesimal anti-symmetric bialgebra is invariant under shift. 

\begin{remark}
{\sc (unital anti-symmetry)} is equivalent to the antisymmetry
$$
   \tau\boldS\tau = -(-1)^{|\boldS|}\boldS
$$
of the degree $|\boldS|=|\boldmu|+|\boldlambda|$ operation $\boldS:A\otimes^* A\to A\otimes^! A$ defined by
$$
   \boldS = (\boldmu\otimes 1)(1\otimes\tau\boldlambda) - (-1)^{|\boldmu|}(1\otimes\boldmu)(\tau\boldlambda\otimes 1). 
$$
\end{remark}

\begin{remark}[Bibliographical note] \label{rmk:ncIIB-biblio} 
There are two main instances of the previous definition, given by the vanishing or non-vanishing of $\boldlambda\boldeta$. 

Under the standing assumption $\boldlambda\boldeta=0$, and in the absence of the grading requirements, of linear topologies, and of unital anti-symmetry, this kind of structure was studied by Aguiar under the name ``infinitesimal bialgebra", see~\cite[Definition~2.1]{Aguiar}.\footnote{The corresponding object had appeared for the first time in the literature in the work of Joni-Rota~\cite{Joni-Rota} under the name of ``infinitesimal coalgebra", and in the work of Ehrenborg-Readdy~\cite{Ehrenborg-Readdy} under the name ``Newtonian coalgebra".}
Without unital anti-symmetry it has appeared for example in~\cite{Merkulov-Vallette,Dotsenko-Shadrin-Vallette}, whereas unital anti-symmetry appears to be present in~\cite{Kaufmann07,Rivera-Wang}.

Under the standing assumption $\boldlambda\boldeta=\boldeta\otimes \boldeta$, and again in the absence of the grading requirements, of linear topologies, and of unital anti-symmetry, this kind of structure was introduced under the name ``unital infinitesimal bialgebra" by Loday-Ronco, who proved a Cartier-Milnor-Moore rigidity theorem for such bialgebras~\cite[Theorem~2.6]{Loday-Ronco}. See also~\cite[\S5.2]{Livernet2006} and~\cite[\S5]{Foissy-Malvenuto-Patras}. 

We prove in~\cite{CHO-reducedSH} that reduced symplectic homology, which is defined for a large class of Weinstein domains including disc cotangent bundles, always carries the structure of a unital infinitesimal anti-symmetric bialgebra. In that case the topology is discrete and we stay strictly within the realm of linear algebra. 
\end{remark}

It is useful to give a graphical interpretation of the {\sc (unital infinitesimal relation)} and of {\sc (unital anti-symmetry)}. Let us represent $\boldmu$ and $\boldlambda$ in the form of {\sf Y}-shaped graphs, 
with the inputs depicted in clockwise order with respect to the output for $\boldmu$, and the outputs depicted in counterclockwise order with respect to the input for $\boldlambda$. See Figure~\ref{fig:mu-and-lambda}.
\begin{figure}
\begin{center}
\includegraphics[width=.7\textwidth]{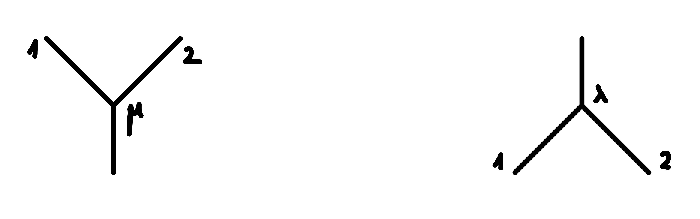}
\caption{The product $\boldmu$ and the coproduct $\boldlambda$.}
\label{fig:mu-and-lambda} 
\end{center}
\end{figure}
Then the {\sc (unital infinitesimal relation)} takes the form depicted in Figure~\ref{fig:infinitesimal-schematic}, and {\sc (unital anti-symmetry)} takes the form depicted in Figure~\ref{fig:4-term-new-schematic}.

\begin{figure}
\begin{center}
\includegraphics[width=\textwidth]{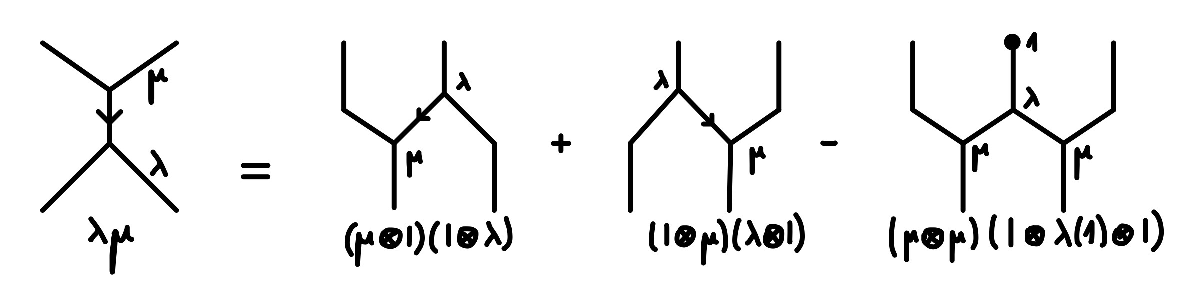}
\caption{The unital infinitesimal relation.}
\label{fig:infinitesimal-schematic} 
\end{center}
\end{figure}

\begin{figure}
\begin{center}
\includegraphics[width=\textwidth]{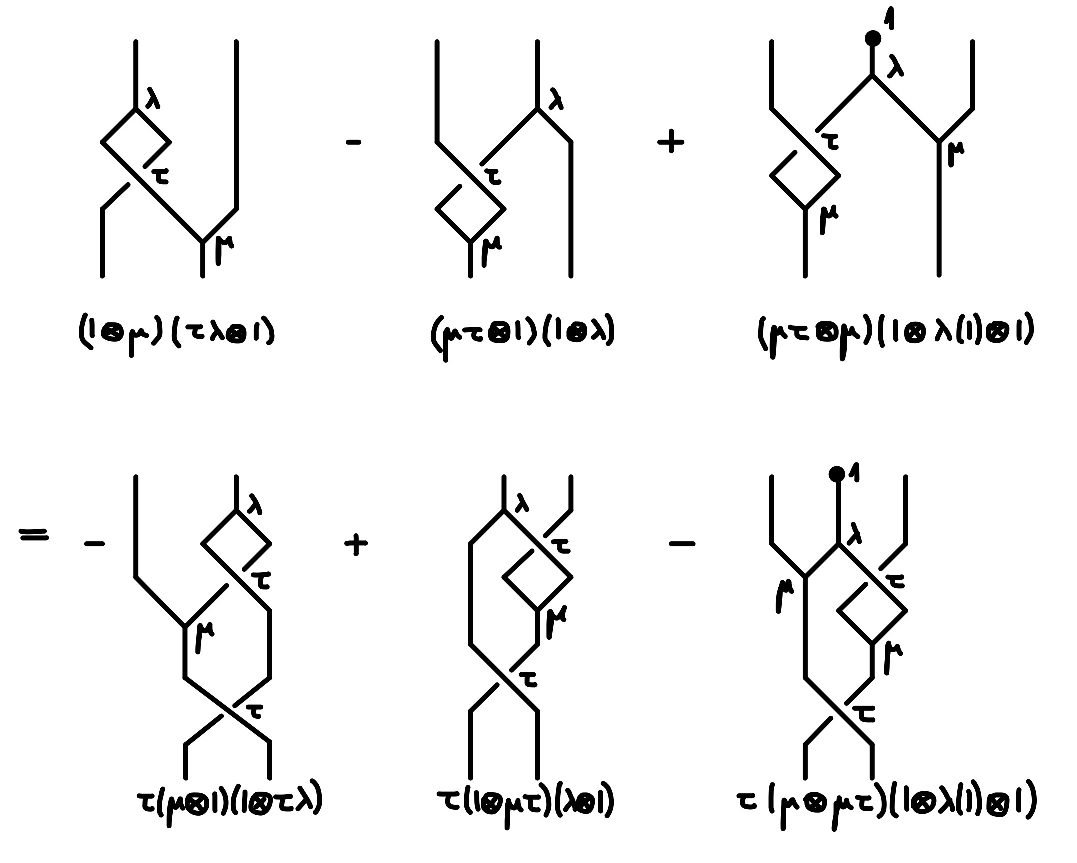}
\caption{The unital anti-symmetry relation.}
\label{fig:4-term-new-schematic} 
\end{center}
\end{figure}

\begin{remark}[The commutative and cocommutative case]\label{rem:ccIIB}
If $\boldmu$ is commutative and $\boldlambda$ is cocommutative,
$$
   \boldmu\tau = (-1)^{|\boldmu|}\boldmu\quad\text{and}\quad
   \tau\boldlambda = (-1)^{|\boldlambda|}\boldlambda,
$$ 
then {\sc (unital anti-symmetry)} is a consequence of the {\sc (unital infinitesimal relation)}. To see this, simply observe that the unital infinitesimal relation transforms the left hand side of the unital anti-symmetry relation to $(-1)^{|\boldmu|+|\boldlambda|}\boldlambda\boldmu$ and the right hand side to $(-1)^{|\boldmu|}\tau\boldlambda\boldmu$, so the two sides are equal. 
\end{remark}

\begin{definition} \label{defi:secondary-counital} 
A \emph{counital infinitesimal anti-symmetric bialgebra} 
is a graded Tate vector space $A$ endowed with a product $\boldmu:A\otimes^* A\to A$, a coproduct $\boldlambda:A\to A\otimes^! A$ and an element $\boldeps\in A^\vee$ which satisfy the following relations:
\begin{itemize}
\item {\sc (counit)} the element $\boldeps$ is the counit for the coproduct $\boldlambda$.
\item {\sc (associativity)} the product $\boldmu$ is associative. 
\item {\sc (coassociativity)} the coproduct $\boldlambda$ is coassociative. 
\item {\sc (counital infinitesimal relation)} 
$$
\boldlambda\boldmu = (-1)^{|\boldlambda| |\boldmu|} \Big((1\otimes\boldmu)(\boldlambda\otimes 1) + (\boldmu\otimes 1)(1\otimes\boldlambda)\Big) - (-1)^{|\boldlambda|}(1\otimes\boldeps\boldmu\otimes 1)(\boldlambda\otimes \boldlambda).
$$
\item {\sc (counital anti-symmetry relation)} 
\begin{align*}
&(-1)^{|\boldmu| (|\boldlambda|+1)} (1\otimes\boldmu) (\tau\boldlambda\otimes 1)  
+ (-1)^{|\boldlambda| (|\boldmu|+1)}(\boldmu\tau\otimes 1)(1\otimes\boldlambda) \\
& \hspace{5cm}- (-1)^{|\boldlambda|+|\boldmu|}(1\otimes\boldeps\boldmu\otimes 1)(\tau\boldlambda\otimes \boldlambda) \\
&= (-1)^{|\boldlambda| |\boldmu|} \tau(1\otimes\boldmu\tau)(\boldlambda\otimes 1) 
- (-1)^{(|\boldlambda|+1)(|\boldmu|+1)} \tau (\boldmu\otimes 1)(1\otimes\tau\boldlambda) \\ 
& \hspace{5cm} - (-1)^{|\boldlambda|}(1\otimes\boldeps\boldmu\otimes 1)(\boldlambda\otimes \tau\boldlambda)\tau.
\end{align*}
\end{itemize}
\end{definition}

Again, these last two relations are to be understood as relations between maps $A\otimes^* A\to A\otimes^! A$
using Proposition~\ref{prop:comm-ass-graded}.

Evaluating $\boldeps\otimes\boldeps$ on the {\sc (anti-symmetry)} relation we obtain in particular
$$
\boldeps\boldmu\tau = (-1)^{|\boldmu|}\boldeps\boldmu. 
$$

\begin{remark} \label{rmk:double-coalgebra} The product $\boldmu$ and the coproduct $\boldlambda$ define a coproduct $\boldsymbol{c}$ on $A\oplus A^\vee$ such that the projections to $(A,\boldlambda)$ and $(A^\vee,\boldmu^\vee)$ are coalgebra maps.
Assume that $\boldeps\boldmu=0$. The above conditions are equivalent to coassociativity of the coproduct $\boldsymbol{c}$ and the fact that it is counital with counit $\boldeps\circ \mathrm{proj}_1$. This is a consequence of Remark~\ref{rmk:double} together with the duality between the corresponding structures. We could rightfully call this ``graded coassociative D-bialgebra", or ``graded coassociative bialgebra in the sense of Drinfeld". The coalgebra $(A\oplus A^\vee,\boldsymbol{c})$ is called the \emph{Drinfeld double}.
\end{remark}

The notion of counital infinitesimal anti-symmetric bialgebra is invariant under shift. 

The notions of unital-, resp. counital infinitesimal anti-symmetric bialgebra are dual to each other: 

(i) If $(A,\boldmu,\boldlambda,\boldeta)$ is a unital infinitesimal anti-symmetric bialgebra, then its dual $(A^\vee,\boldlambda^\vee,\boldmu^\vee,\boldeta^\vee)$ is a counital infinitesimal anti-symmetric bialgebra.

(ii) If $(A,\boldmu,\boldlambda,\boldeps)$ is a counital infinitesimal anti-symmetric bialgebra, then its dual $(A^\vee,\boldlambda^\vee,\boldmu^\vee,\boldeps^\vee)$ is a unital infinitesimal anti-symmetric bialgebra. 

The verification is straightforward, using the identity
\begin{equation}\label{eq:move-tau}
  \tau(1\otimes\boldmu\tau)(\boldlambda\otimes 1)=(\boldmu\otimes 1)(1\otimes\tau\boldlambda)\tau. 
\end{equation}

\begin{definition} \label{defi:secondary-biunital} 
A \emph{biunital infinitesimal anti-symmetric bialgebra} 
is a graded Tate vector space $A$ endowed with a product $\boldmu:A\otimes^* A\to A$, a coproduct $\boldlambda:A\to A\otimes^! A$, and elements $\boldeta\in A$ and $\boldeps\in A^\vee$ which satisfy the following relations:
\begin{itemize}
\item {\sc (unit)}the element $\boldeta$ is the unit for the product $\boldmu$. 
\item {\sc (counit)} the element $\boldeps$ is the counit for the coproduct $\boldlambda$.
\item {\sc (associativity)} the product $\boldmu$ is associative. 
\item {\sc (coassociativity)} the coproduct $\boldlambda$ is coassociative. 
\item {\sc (biunital infinitesimal relation)} 
$$
\boldlambda\boldmu = (-1)^{|\boldlambda| |\boldmu|} \Big((1\otimes\boldmu)(\boldlambda\otimes 1) + (\boldmu\otimes 1)(1\otimes\boldlambda)\Big) - (-1)^{|\boldlambda|}(1\otimes\boldeps\boldmu\otimes 1)(\boldlambda\otimes \boldlambda)
$$
and 
$$
(-1)^{|\boldlambda|}(1\otimes\boldeps\boldmu\otimes 1)(\boldlambda\otimes \boldlambda) = 
(-1)^{|\boldmu|}(\boldmu\otimes \boldmu)(1\otimes \boldlambda\boldeta\otimes 1).
$$
\item {\sc (biunital anti-symmetry relation)} 
\begin{align*}
&(-1)^{|\boldmu| (|\boldlambda|+1)} (1\otimes\boldmu) (\tau\boldlambda\otimes 1)  
+ (-1)^{|\boldlambda| (|\boldmu|+1)}(\boldmu\tau\otimes 1)(1\otimes\boldlambda) \\
& \hspace{5cm}- (-1)^{|\boldlambda|+|\boldmu|}(\boldmu\tau\otimes \boldmu)(1\otimes\boldlambda\boldeta\otimes 1) \\
&= (-1)^{|\boldlambda| |\boldmu|} \tau(1\otimes\boldmu\tau)(\boldlambda\otimes 1) 
- (-1)^{(|\boldlambda|+1)(|\boldmu|+1)} \tau (\boldmu\otimes 1)(1\otimes\tau\boldlambda) \\ 
& \hspace{5cm} - (-1)^{|\boldmu|}\tau(\boldmu\otimes \boldmu\tau)(1\otimes\boldlambda\boldeta\otimes 1).
\end{align*}
and 
$$
\left\{\begin{array}{rcl}
(\boldmu\tau\otimes \boldmu)(1\otimes\boldlambda\boldeta\otimes 1) & = & (1\otimes\boldeps\boldmu\otimes 1)(\tau\boldlambda\otimes\boldlambda),\\
(\boldmu\otimes \boldmu\tau)(1\otimes\boldlambda\boldeta\otimes 1) & = & (1\otimes\boldeps\boldmu\otimes 1)(\boldlambda\otimes\tau\boldlambda).
\end{array}\right.
$$
\end{itemize}
\end{definition}

The {\sc (biunital infinitesimal)} and {\sc (biunital anti-symmetry relation)} are again to be understood as relations between maps $A\otimes^* A\to A\otimes^! A$. 

This algebraic structure is invariant under shifts and is self-dual.
The additional conditions in the {\sc (infinitesimal)} and {\sc (anti-symmetry)} relations ensure that both the unital and the counital versions are satisfied.

\section{CoFrobenius bialgebras}\label{sec:coFrob}

\begin{definition} \label{defi:coFrobenius-unital-bialgebra} 
A \emph{unital coFrobenius bialgebra}
is a graded Tate vector space $A$ endowed with a product $\boldmu:A\otimes^* A\to A$, a coproduct $\boldlambda:A\to A\otimes^! A$, and an element $\boldeta\in A$ which satisfy the following relations:
\begin{itemize}
\item {\sc (unit)} the element $\boldeta$ is the unit for the product $\boldmu$.
\item {\sc (associativity)} the product $\boldmu$ is associative. 
\item {\sc (coassociativity)} the coproduct $\boldlambda$ is coassociative. 

\medskip 

\noindent Moreover, defining the \emph{copairing} by 
$$
\boldc = (-1)^{|\boldlambda||\boldmu| + |\boldmu|}\boldlambda\boldeta \in A\otimes^! A,
$$
we have: 

\item {\sc (unital coFrobenius)} 
$$
\boldlambda=(1\otimes \boldmu)(\boldc\otimes 1) = (-1)^{|\boldmu|}(\boldmu\otimes 1)(1\otimes \boldc).
$$
\item {\sc(symmetry)}
$$
\tau \boldc = (-1)^{|\boldlambda|} \boldc. 
$$
\end{itemize}
\end{definition}

The {\sc (coassociativity)} relation is actually implied by the {\sc (unital coFrobenius)} relation.

\begin{proposition}\label{prop:unital-coFrob-bialg}
Let $(A,\boldmu,\boldlambda,\boldeta)$ be a unital coFrobenius bialgebra. The following identities hold:
\begin{gather*}
  (1\otimes\boldmu\otimes 1)(\boldc\otimes\boldc) = (\boldlambda\otimes 1)\boldc
  = (-1)^{|\boldlambda|}(1\otimes\boldlambda)\boldc, \cr
  (1\otimes\boldmu)(\boldlambda\otimes 1) = (\boldmu\otimes 1)(1\otimes\boldlambda)
  = (\boldmu\otimes\boldmu)(1\otimes\boldc\otimes 1)
  = (-1)^{|\boldlambda||\boldmu|}\boldlambda\boldmu.
\end{gather*}
In particular, $(A,\boldmu,\boldlambda,\boldeta)$ is a unital infinitesimal anti-symmetric bialgebra. 
\end{proposition}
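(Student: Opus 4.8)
The plan is to treat the proposition as a graded version of the classical Frobenius‑algebra calculus: one eliminates every occurrence of $\boldlambda$ in favour of $\boldmu$ and the copairing $\boldc$ by means of the two {\sc (unital coFrobenius)} relations, after which only $\boldmu$, $\boldc$, {\sc (associativity)} and {\sc (symmetry)} remain. The single auxiliary fact that powers all the computations is the equality of the two {\sc (unital coFrobenius)} expressions for $\boldlambda$, which says that $\boldc$ is \emph{balanced}:
$$
  (\boldmu\otimes 1)(1\otimes\boldc)\;=\;(-1)^{|\boldmu|}(1\otimes\boldmu)(\boldc\otimes 1),
$$
i.e.\ $\sum x\,c_i'\otimes c_i''=\sum c_i'\otimes c_i''\,x$ in Sweedler notation $\boldc=\sum c_i'\otimes c_i''$. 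Also, applying either {\sc (unital coFrobenius)} relation to $\boldeta$ and using that $\boldeta$ is a unit for $\boldmu$ reproduces the defining relation $\boldc=(-1)^{|\boldlambda||\boldmu|+|\boldmu|}\boldlambda\boldeta$, so $\boldlambda\boldeta$ and $\boldc$ are freely interchangeable in what follows.

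\emph{The two families of identities.} For the first line I substitute $\boldlambda=(1\otimes\boldmu)(\boldc\otimes 1)$ into the outer $\boldlambda$ of $(\boldlambda\otimes 1)\boldc$ and $\boldlambda=(-1)^{|\boldmu|}(\boldmu\otimes 1)(1\otimes\boldc)$ into the outer $\boldlambda$ of $(1\otimes\boldlambda)\boldc$; bifunctoriality of $\otimes$ collapses each composite — after identifying $(\boldc\otimes 1\otimes 1)\boldc$ and $(1\otimes 1\otimes\boldc)\boldc$ with $\boldc\otimes\boldc$ up to a relabelling of the two copies of $\boldc$ — onto $(1\otimes\boldmu\otimes 1)(\boldc\otimes\boldc)$, giving the first line (the sign $(-1)^{|\boldlambda|}$ comes from the $(-1)^{|\boldmu|}$ in the second relation together with the degree $|\boldc|=|\boldlambda|-|\boldmu|$). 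For the second line I substitute the two {\sc (unital coFrobenius)} forms of $\boldlambda$ into $(1\otimes\boldmu)(\boldlambda\otimes 1)$ and $(\boldmu\otimes 1)(1\otimes\boldlambda)$ respectively, merge the two adjacent copies of $\boldmu$ by {\sc (associativity)}, and use the balanced property; both left‑hand composites, and also $\boldlambda\boldmu$ (expanded by a {\sc (unital coFrobenius)} relation), reduce to $(\boldmu\otimes\boldmu)(1\otimes\boldc\otimes 1)$. In Sweedler terms this is the chain $\boldlambda(ab)=\sum(ab)c_i'\otimes c_i''=\sum a\,c_i'\otimes c_i''\,b=\sum c_i'\otimes c_i''(ab)$, the middle equality being the balanced identity applied with $x=b$ after left multiplying the first leg by $a$.

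\emph{Conclusion.} {\sc (unit)}, {\sc (associativity)} and {\sc (coassociativity)} hold by hypothesis. Feeding the second‑line identities, together with
$$
  (\boldmu\otimes\boldmu)(1\otimes\boldlambda\boldeta\otimes 1)=(-1)^{|\boldlambda||\boldmu|+|\boldmu|}(\boldmu\otimes\boldmu)(1\otimes\boldc\otimes 1)=(-1)^{|\boldmu|}\boldlambda\boldmu,
$$
into the right‑hand side of the {\sc (unital infinitesimal relation)} turns it into $2\boldlambda\boldmu-\boldlambda\boldmu=\boldlambda\boldmu$, which is its left‑hand side. For {\sc (unital anti-symmetry)} I use its reformulation from the Remark following Definition~\ref{defi:secondary-unital}, namely the antisymmetry $\tau\boldS\tau=-(-1)^{|\boldS|}\boldS$ of $\boldS=(\boldmu\otimes 1)(1\otimes\tau\boldlambda)-(-1)^{|\boldmu|}(1\otimes\boldmu)(\tau\boldlambda\otimes 1)$: expanding the two copies of $\tau\boldlambda$ by the {\sc (unital coFrobenius)} relations rewrites $\boldS$ in terms of $\boldmu$ and a single $\boldc$, and then the required identity reduces to transposing the two legs of $\boldc$, which is exactly what {\sc (symmetry)} $\tau\boldc=(-1)^{|\boldlambda|}\boldc$ provides. (One can equally well expand all six terms of the {\sc (unital anti-symmetry)} relation in the same way and match them in pairs.)

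The conceptual content is the classical surface/bimodule calculus for Frobenius algebras, so no deep input is needed; the only genuine labour is bookkeeping the Koszul signs. I expect the {\sc (anti-symmetry)} step to be the main obstacle, since it is the single place where {\sc (symmetry)} of $\boldc$ is actually used and where the largest number of sign‑bearing transpositions occur.
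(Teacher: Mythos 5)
Your proposal is correct and follows essentially the paper's own route: the displayed identities and the collapse of the {\sc (unital infinitesimal relation)} are obtained exactly as in the paper (after substituting the coFrobenius relations all terms become $(-1)^{|\boldlambda||\boldmu|}\boldlambda\boldmu$ up to sign and cancel pairwise), and your parenthetical fallback for {\sc (unital anti-symmetry)} --- expand all six terms via the coFrobenius relations and match them using symmetry of $\boldc$, associativity and the balanced identity --- is precisely what the paper does (it first shows the three left-hand terms are mutually equal and the three right-hand terms are mutually equal via $\tau\boldlambda=(-1)^{|\boldlambda|}(\boldmu\tau\otimes 1)(1\otimes\boldc)=(-1)^{|\boldmu|+|\boldlambda|}(1\otimes\boldmu\tau)(\boldc\otimes 1)$, then one cross equality using $\boldmu(1\otimes\boldmu\tau)(\tau\otimes 1)=(-1)^{|\boldmu|}\boldmu\tau(1\otimes\boldmu)$ and \eqref{eq:move-tau}). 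The only deviation is your primary appeal to the unproven Remark recasting anti-symmetry as antisymmetry of $\boldS$; since that equivalence is not a mere reindexing (the $\boldlambda\boldeta$ terms of the six-term relation do not occur in $\boldS$), the direct expansion you already indicate is the safer, and in fact the paper's, argument.
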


The first identity is to be understood as an identity in $A^{\otimes^! 3}$, and the second one as an identity of maps $A\otimes^* A\to A\otimes^! A$. 

\begin{proof}
Coassociativity of $\lambda$ and the other identities in the proposition follow by direct computation. By the second line of equalities and the definition of $\boldc$, all the terms in the {\sc (unital infinitesimal relation)} are equal and cancel pairwise. 
For {\sc (unital anti-symmetry)} one first checks the identities
$$
\tau\boldlambda = (-1)^{|\boldlambda|}(\boldmu\tau\otimes 1)(1\otimes\boldc)
= (-1)^{|\boldmu|+|\boldlambda|}(1\otimes\boldmu\tau)(\boldc\otimes 1). 
$$
These imply that the three terms on the left hand side of the {\sc (unital anti-symmetry)} relation are equal, as are the three terms on the right hand side. To show that the terms on the left hand side equal those on the right hand side, we first use associativity of $\boldmu$ to verify the identity
$$
  \boldmu(1\otimes\boldmu\tau)(\tau\otimes 1) = (-1)^{|\boldmu|}\boldmu\tau(1\otimes\boldmu).
$$
Using this and the {\sc (unital coFrobenius)} relation we compute
\begin{align*}
  (\boldmu\otimes 1) & (1\otimes\tau\boldlambda)
  = (-1)^{|\boldlambda|}(\boldmu\otimes 1)\Bigl(1\otimes[(\boldmu\tau\otimes 1)(1\otimes\boldc)]\Bigr) \cr
  &= (-1)^{|\boldlambda|}(\boldmu\otimes 1)(1\otimes\boldmu\tau\otimes 1)(1\otimes 1\otimes\boldc) \cr
  &= (-1)^{|\boldlambda|}\Bigl([\boldmu(1\otimes\boldmu\tau)]\otimes 1\Bigr)(1\otimes 1\otimes\boldc) \cr
  &= (-1)^{|\boldmu|+|\boldlambda|}\Bigl([\boldmu\tau(1\otimes\boldmu)(\tau\otimes 1)]\otimes 1\Bigr)(1\otimes 1\otimes\boldc) \cr
  &= (-1)^{|\boldmu|+|\boldlambda|}(\boldmu\tau\otimes 1)(1\otimes\boldmu\otimes 1)(\tau\otimes 1\otimes 1)(1\otimes 1\otimes\boldc) \cr
  &= (-1)^{|\boldmu|+|\boldlambda|}(\boldmu\tau\otimes 1)(1\otimes\boldmu\otimes 1)(1\otimes 1\otimes\boldc)\tau \cr
  &= (-1)^{|\boldlambda|}(\boldmu\tau\otimes 1)(1\otimes\boldlambda)\tau \cr
  &= (-1)^{|\boldlambda|}\tau(1\otimes\boldmu)(\tau\boldlambda\otimes 1),
\end{align*}
where the last equality follows from~\eqref{eq:move-tau}. This proves that the first term on the left hand side of the {\sc (unital anti-symmetry)} relation equals the second term on the right hand side, and thus concludes the proof of {\sc (unital anti-symmetry)}.
\end{proof}

The notion of a unital coFrobenius bialgebra is invariant under shifts. For the proof we need to determine the correct shift for the copairing $\boldc$. Given the shifted product $\ol\boldmu = s\boldmu (\omega\otimes\omega)$ and coproduct $\ol\boldlambda=(s\otimes s)\boldlambda \omega$, we recall that the unit for $\ol\boldmu$ is $\ol\boldeta=(-1)^{|\boldmu|}s\boldeta$. With this one computes 
$$
\ol\boldc = (-1)^{|\ol\boldlambda||\ol\boldmu|+|\ol\boldmu|}\ol\boldlambda\ol\boldeta = (-1)^{|\boldlambda|}(s\otimes s)\boldc.
$$ 
The invariance of the structure under shifts now follows by a straightforward computation.

\begin{definition} \label{defi:coFrobenius-counital-bialgebra} 
A \emph{counital coFrobenius bialgebra}
is a graded Tate vector space $A$ endowed with a product $\boldmu:A\otimes^* A\to A$, a coproduct $\boldlambda:A\to A\otimes^! A$, and an element $\boldeps\in A^\vee$ which satisfy the following relations:
\begin{itemize}
\item {\sc (counit)} the element $\boldeps$ is the counit for the coproduct $\boldlambda$.
\item {\sc (coassociativity)} the coproduct $\boldlambda$ is coassociative. 
\item {\sc (associativity)} the product $\boldmu$ is associative. 

\medskip 

\noindent Moreover, defining the \emph{pairing} by 
$$
\boldp = (-1)^{|\boldlambda|}\boldeps\boldmu:A\otimes^* A\to \bk,
$$
we have: 

\item {\sc (counital coFrobenius)} 
$$
\boldmu = (-1)^{|\boldmu|  |\boldlambda| + |\boldlambda|}(\boldp\otimes 1)(1\otimes \boldlambda) = (-1)^{|\boldmu|  |\boldlambda|}(1\otimes \boldp)(\boldlambda\otimes 1).
$$
\item {\sc (symmetry)}
$$
\boldp \tau  = (-1)^{|\boldmu|} \boldp. 
$$
\end{itemize}
\end{definition}

The {\sc (associativity)} relation is actually implied by the {\sc (counital coFrobenius)} relation.

Just like its unital counterpart, the notion of a counital coFrobenius bialgebra is invariant under shifts. Again, for the proof we need to determine the correct shift for the pairing $\boldp$. Given the shifted product $\ol\boldmu = s\boldmu (\omega\otimes\omega)$ and coproduct $\ol\boldlambda=(s\otimes s)\boldlambda \omega$, the counit for $\ol\boldlambda$ is $\ol\boldeps=(-1)^{|\boldlambda|}\boldeps\omega$. With this one computes 
$$
\ol\boldp = (-1)^{|\ol\boldlambda|}\ol\boldeps\, \ol\boldmu = (-1)^{|\boldlambda|+1}\boldp (\omega\otimes\omega).
$$ 
The invariance of the structure under shifts now follows by a straightforward computation. 

The notions of unital/counital coFrobenius bialgebras are dual to each other.
Dualizing Proposition~\ref{prop:unital-coFrob-bialg} yields

\begin{proposition}\label{prop:counital-coFrob-bialg}
Let $(A,\boldmu,\boldlambda,\boldeps)$ be a counital coFrobenius bialgebra. The following identities hold:
$$ 
(-1)^{|\boldp| |\boldmu|} (\boldp\otimes\boldp)(1\otimes\boldlambda\otimes 1) = \boldp(\boldmu\otimes 1) = 
 (-1)^{|\boldmu|} \boldp(1\otimes\boldmu), 
 $$
\begin{align*} 
(\boldmu\otimes 1)(1\otimes\boldlambda) 
  & = (1\otimes\boldmu) (\boldlambda\otimes 1) \\
  & = (-1)^{|\boldlambda||\boldmu|} (1\otimes\boldp\otimes 1)(\boldlambda\otimes\boldlambda)
  = (-1)^{|\boldlambda||\boldmu|}\boldlambda\boldmu.
\end{align*}
In particular, $(A,\boldmu,\boldlambda,\boldeps)$ is a counital infinitesimal anti-symmetric bialgebra.
\qed
\end{proposition}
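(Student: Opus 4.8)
The plan is to deduce this from Proposition~\ref{prop:unital-coFrob-bialg} by duality, rather than redo the computation. Since $(A,\boldmu,\boldlambda,\boldeps)$ is a counital coFrobenius bialgebra, its dual $(A^\vee,\boldlambda^\vee,\boldmu^\vee,\boldeps^\vee)$ is a unital coFrobenius bialgebra, with product $\boldlambda^\vee$, coproduct $\boldmu^\vee$ and unit $\boldeps^\vee$. First I would identify the copairing of this dual structure with $\boldp^\vee$, regarded via $\iota^{-1}$ as a map $R\to A^\vee\otimes A^\vee$: using $|\boldeps|=-|\boldlambda|$ together with the sign rule $(\psi\varphi)^\vee=(-1)^{|\varphi||\psi|}\varphi^\vee\psi^\vee$, one checks that
\[
  (-1)^{|\boldmu^\vee||\boldlambda^\vee|+|\boldlambda^\vee|}\boldmu^\vee\boldeps^\vee
  = (-1)^{|\boldlambda|+|\boldlambda||\boldmu|}\boldmu^\vee\boldeps^\vee = \boldp^\vee,
\]
so that the copairing $\boldc$ attached to $(A^\vee,\boldlambda^\vee,\boldmu^\vee,\boldeps^\vee)$ is exactly $\boldp^\vee$. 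Feeding this into Proposition~\ref{prop:unital-coFrob-bialg} then produces two lines of identities among $\boldmu^\vee$, $\boldlambda^\vee$ and $\boldp^\vee$.

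The second step is to dualize those identities term by term. The only ingredients are $(\psi\varphi)^\vee=(-1)^{|\varphi||\psi|}\varphi^\vee\psi^\vee$, $(\varphi\otimes\psi)^\vee=\varphi^\vee\otimes\psi^\vee$, the identity $1^\vee=1$, and the involutivity $\boldmu^{\vee\vee}=\boldmu$, $\boldlambda^{\vee\vee}=\boldlambda$, $\boldp^{\vee\vee}=\boldp$; the relevant degrees are $|\boldmu^\vee|=|\boldmu|$, $|\boldlambda^\vee|=|\boldlambda|$, $|\boldp^\vee|=|\boldp|=|\boldmu|-|\boldlambda|$. For instance, dualizing $(\boldmu^\vee\otimes 1)\boldp^\vee$ gives $(-1)^{|\boldp||\boldmu|}\boldp(\boldmu\otimes 1)$, and dualizing $(\boldlambda^\vee\otimes\boldlambda^\vee)(1\otimes\boldp^\vee\otimes 1)$ gives $(1\otimes\boldp\otimes 1)(\boldlambda\otimes\boldlambda)$ (the sign being $(-1)^{2|\boldp||\boldlambda|}=1$). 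Carrying this out on each term, collecting the common sign and rescaling by a $\pm1$, I expect to land precisely on
\[
  (-1)^{|\boldp||\boldmu|}(\boldp\otimes\boldp)(1\otimes\boldlambda\otimes 1) = \boldp(\boldmu\otimes 1) = (-1)^{|\boldmu|}\boldp(1\otimes\boldmu)
\]
and on
\[
  (\boldmu\otimes 1)(1\otimes\boldlambda) = (1\otimes\boldmu)(\boldlambda\otimes 1)
  = (-1)^{|\boldlambda||\boldmu|}(1\otimes\boldp\otimes 1)(\boldlambda\otimes\boldlambda)
  = (-1)^{|\boldlambda||\boldmu|}\boldlambda\boldmu,
\]
which are the asserted identities.

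For the last assertion I would argue identically: by the final sentence of Proposition~\ref{prop:unital-coFrob-bialg}, the dual $(A^\vee,\boldlambda^\vee,\boldmu^\vee,\boldeps^\vee)$ is a unital infinitesimal anti-symmetric bialgebra, and since the unital and counital notions are dual to each other, dualizing back shows that $(A,\boldmu,\boldlambda,\boldeps)$ is a counital infinitesimal anti-symmetric bialgebra.

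The only genuine obstacle is the sign bookkeeping, and within that the crucial point is to pin down the identity $\boldc=\boldp^\vee$ for the dual structure; once that is in place every remaining step is formal, because all maps occurring in Proposition~\ref{prop:unital-coFrob-bialg} are built solely from $\boldmu$, $\boldlambda$ and $\boldc$, so dualizing turns them mechanically into the corresponding expressions in $\boldlambda^\vee$, $\boldmu^\vee$, $\boldp^\vee$ — and hence, after a second dualization, in $\boldmu$, $\boldlambda$, $\boldp$.
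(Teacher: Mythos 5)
Your proposal is correct and matches the paper's own argument: the paper proves Proposition~\ref{prop:counital-coFrob-bialg} precisely by dualizing Proposition~\ref{prop:unital-coFrob-bialg} (introducing it with ``Dualizing Proposition~\ref{prop:unital-coFrob-bialg} yields'' and stating it with \qed), relying on the fact that the dual of a counital coFrobenius bialgebra is a unital one. Your identification of the dual copairing as $\boldp^\vee$ and the subsequent sign bookkeeping are accurate, so you have simply made explicit the computation the paper leaves to the reader.
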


The first identity is to be understood as an identity of maps $A^{\otimes^* 3}\to \bk$, and the second one as an identity of maps $A\otimes^* A\to A\otimes^! A$.

\begin{definition} \label{defi:biunital-coFrobenius-bialgebra} 
A \emph{biunital coFrobenius bialgebra}  
is a graded Tate vector space $A$ endowed with a product $\boldmu:A\otimes^* A\to A$, a coproduct $\boldlambda:A\to A\otimes^! A$, and elements $\boldeta\in A$, $\boldeps\in A^\vee$ which satisfy the following relations:
\begin{itemize}
\item {\sc (unit)} the element $\boldeta$ is the unit for the product $\boldmu$.
\item {\sc (counit)} the element $\boldeps$ is the counit for the coproduct $\boldlambda$. 
\item {\sc (associativity)} the product $\boldmu$ is associative. 
\item {\sc (coassociativity)} the coproduct $\boldlambda$ is coassociative. 

\medskip 

\noindent Moreover, defining the \emph{copairing} by 
$$
\boldc = (-1)^{|\boldlambda||\boldmu| + |\boldmu|}\boldlambda\boldeta\in A\otimes^! A,
$$
and the \emph{pairing} by 
$$
\boldp = (-1)^{|\boldlambda|}\boldeps\boldmu:A\otimes^* A\to\bk,
$$
we have: 

\item {\sc (biunital coFrobenius)} 
$$
\boldlambda=(1\otimes \boldmu)(\boldc\otimes 1) = (-1)^{|\boldmu|}(\boldmu\otimes 1)(1\otimes \boldc)
$$
and
$$
\boldmu = (-1)^{|\boldmu|  |\boldlambda| + |\boldlambda|}(\boldp\otimes 1)(1\otimes \boldlambda) = (-1)^{|\boldmu|  |\boldlambda|}(1\otimes \boldp)(\boldlambda\otimes 1).
$$
\item {\sc (symmetry)}
$$
\tau \boldc = (-1)^{|\boldlambda|} \boldc
$$
and
$$
\boldp \tau  = (-1)^{|\boldmu|} \boldp. 
$$
\end{itemize}
\end{definition}

\begin{proposition}\label{prop:coFrob-criterion}
\quad 

(i) Let $(A,\boldmu,\boldlambda,\boldeta)$ be a unital coFrobenius bialgebra such that the coproduct $\boldlambda$ is counital with counit $\boldeps$. Then $(A,\boldmu,\boldlambda,\boldeta,\boldeps)$ is a biunital coFrobenius bialgebra.  

(ii) Let $(A,\boldmu,\boldlambda,\boldeps)$ be a counital coFrobenius bialgebra such that the product $\boldmu$ is unital with unit $\boldeta\in A$. Then $(A,\boldmu,\boldlambda,\boldeta,\boldeps)$ is a biunital coFrobenius bialgebra. 
\end{proposition}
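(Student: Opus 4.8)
My plan is to prove (i) and then deduce (ii) from it by duality. For (i), I would first observe that of the defining properties of a biunital coFrobenius bialgebra, the (unit), (counit), (associativity) and (coassociativity) relations hold by hypothesis — (coassociativity) being in any case implied by (unital coFrobenius) — and so do the first of the two (biunital coFrobenius) relations, $\boldlambda=(1\otimes\boldmu)(\boldc\otimes1)=(-1)^{|\boldmu|}(\boldmu\otimes1)(1\otimes\boldc)$, and the first of the two (symmetry) relations, $\tau\boldc=(-1)^{|\boldlambda|}\boldc$. So two statements remain to be proved: the second (biunital coFrobenius) relation $\boldmu=(-1)^{|\boldmu||\boldlambda|}(1\otimes\boldp)(\boldlambda\otimes1)=(-1)^{|\boldmu||\boldlambda|+|\boldlambda|}(\boldp\otimes1)(1\otimes\boldlambda)$, with $\boldp=(-1)^{|\boldlambda|}\boldeps\boldmu$; and the second (symmetry) relation $\boldp\tau=(-1)^{|\boldmu|}\boldp$.

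The coFrobenius relation for $\boldmu$ should follow immediately from Proposition~\ref{prop:unital-coFrob-bialg}, which supplies the identities $(1\otimes\boldmu)(\boldlambda\otimes1)=(\boldmu\otimes1)(1\otimes\boldlambda)=(-1)^{|\boldlambda||\boldmu|}\boldlambda\boldmu$. Substituting these, the definition of $\boldp$, and the counit relations $(\boldeps\otimes1)\boldlambda=1=(-1)^{|\boldlambda|}(1\otimes\boldeps)\boldlambda$, one computes
\begin{align*}
(1\otimes\boldp)(\boldlambda\otimes1)&=(-1)^{|\boldlambda|}(1\otimes\boldeps)(1\otimes\boldmu)(\boldlambda\otimes1)=(-1)^{|\boldlambda|+|\boldlambda||\boldmu|}(1\otimes\boldeps)\boldlambda\boldmu=(-1)^{|\boldlambda||\boldmu|}\boldmu,\\
(\boldp\otimes1)(1\otimes\boldlambda)&=(-1)^{|\boldlambda|}(\boldeps\otimes1)(\boldmu\otimes1)(1\otimes\boldlambda)=(-1)^{|\boldlambda|+|\boldlambda||\boldmu|}(\boldeps\otimes1)\boldlambda\boldmu=(-1)^{|\boldlambda|+|\boldlambda||\boldmu|}\boldmu,
\end{align*}
which is exactly the asserted relation.

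The symmetry of $\boldp$ is the substantive step, and I expect it to be the main obstacle, because of the Koszul sign bookkeeping. The plan is to first record the two snake identities
$$
(1\otimes\boldp)(\boldc\otimes1)=1,\qquad (\boldp\otimes1)(1\otimes\boldc)=(-1)^{|\boldlambda|+|\boldmu|}1,
$$
which come at once from the (unital coFrobenius) relation together with the counit relations — for instance $(1\otimes\boldp)(\boldc\otimes1)=(-1)^{|\boldlambda|}(1\otimes\boldeps)(1\otimes\boldmu)(\boldc\otimes1)=(-1)^{|\boldlambda|}(1\otimes\boldeps)\boldlambda=1$. Since $A$ is free and finite dimensional, these identities say precisely that $\widehat\boldp:A\to A^\vee$, $a\mapsto\boldp(a\otimes\,\cdot\,)$, is an isomorphism whose inverse is $\phi\mapsto\pm(\phi\otimes1)\boldc$; equivalently, $\boldp$ is the unique pairing standing in this relation to the copairing $\boldc$, which is the graded form of the classical fact that in a Frobenius algebra the Frobenius form is recovered from the Casimir element. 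I would then choose a homogeneous basis $\{e_i\}$ of $A$, write $\boldc=\sum_i e_i\otimes f_i$ — so that $\{f_i\}$ is again a basis, by the second snake identity — and feed the symmetry $\tau\boldc=(-1)^{|\boldlambda|}\boldc$, i.e. $\sum_i(-1)^{|e_i||f_i|}f_i\otimes e_i=(-1)^{|\boldlambda|}\sum_i e_i\otimes f_i$, into the two snake identities to obtain $\sum_i\boldp(x\otimes f_i)e_i=\pm\sum_i\boldp(f_i\otimes x)e_i$ for all $x$; comparing coefficients in the basis $\{e_i\}$ and then using that the $f_i$ span $A$ gives $\boldp(y\otimes x)=(-1)^{|\boldmu|+|x||y|}\boldp(x\otimes y)$, i.e. $\boldp\tau=(-1)^{|\boldmu|}\boldp$. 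The sign tracking is made manageable by the observation that in a nonzero term $\boldp(x\otimes f_i)$ the degrees of $e_i$ and $f_i$ are forced by those of $x$, $\boldmu$ and $\boldlambda$, so all the relevant signs can be extracted from the sums.

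Finally, (ii) should follow from (i) by dualizing: if $(A,\boldmu,\boldlambda,\boldeps)$ is a counital coFrobenius bialgebra whose product $\boldmu$ has unit $\boldeta$, then its dual $(A^\vee,\boldlambda^\vee,\boldmu^\vee,\boldeps^\vee)$ is a unital coFrobenius bialgebra whose coproduct $\boldmu^\vee$ is counital with counit $\boldeta^\vee$; by part (i), $(A^\vee,\boldlambda^\vee,\boldmu^\vee,\boldeps^\vee,\boldeta^\vee)$ is a biunital coFrobenius bialgebra, and dualizing once more — using that biunital coFrobenius bialgebras are self-dual and that $\boldmu^{\vee\vee}=\boldmu$, $\boldlambda^{\vee\vee}=\boldlambda$, $\boldeta^{\vee\vee}=\boldeta$, $\boldeps^{\vee\vee}=\boldeps$ — returns the biunital coFrobenius bialgebra $(A,\boldmu,\boldlambda,\boldeta,\boldeps)$.
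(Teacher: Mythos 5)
Your proposal is correct, and its central computation is exactly the paper's: the paper proves (i) by establishing the {\sc (counital coFrobenius)} relation from the identity $(\boldmu\otimes 1)(1\otimes\boldlambda)=(-1)^{|\boldmu||\boldlambda|}\boldlambda\boldmu$ of Proposition~\ref{prop:unital-coFrob-bialg} together with counitality, which is precisely your first display (the paper writes out only the $(\boldp\otimes 1)(1\otimes\boldlambda)$ half and says the other follows similarly). The two places where you genuinely diverge are worth recording. First, the paper's proof says nothing about the remaining axiom $\boldp\tau=(-1)^{|\boldmu|}\boldp$, implicitly treating it as not needing comment; you supply an argument for it via the snake identities $(1\otimes\boldp)(\boldc\otimes 1)=1$, $(\boldp\otimes 1)(1\otimes\boldc)=(-1)^{|\boldlambda|+|\boldmu|}$ and the symmetry of $\boldc$, and this argument is sound: writing $\boldc=\sum_i e_i\otimes f_i$, in any nonzero term the degree constraints $|e_i|+|x|\equiv|\boldlambda|+|\boldmu|$, $|f_i|\equiv|x|$ pin down all Koszul signs, and comparing the two resulting expressions for $x$ in the basis $\{f_i\}$ (a basis since it generates a free module of the same finite rank) gives exactly $\boldp\tau=(-1)^{|\boldmu|}\boldp$; so your proof is in this respect more complete than the one in the paper. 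Second, for (ii) the paper simply runs the mirror-image computation using Proposition~\ref{prop:counital-coFrob-bialg}, whereas you dualize part (i) using the duality between unital and counital coFrobenius bialgebras, Lemma~\ref{lem:biunital-coFrob-invariance}(i) and the double-dual identities of \S\ref{sec:conventions}; both routes are legitimate under the standing freeness and finite-dimensionality assumption, the paper's being marginally more self-contained and yours avoiding a repetition of the sign bookkeeping.
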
 

\begin{proof}
For (i) we need to prove the {\sc (counital coFrobenius)} relation.
Using $(\boldmu\otimes 1)(1\otimes\boldlambda)=(-1)^{|\boldmu||\boldlambda|}\boldlambda\boldmu$ from Proposition~\ref{prop:unital-coFrob-bialg}, we compute
\begin{align*}
  (\boldp\otimes 1)(1\otimes\boldlambda)
  &= (-1)^{|\boldlambda|}(\boldeps\boldmu\otimes 1)(1\otimes\boldlambda)
  = (-1)^{|\boldmu||\boldlambda|+|\boldlambda|}(\boldeps\otimes 1)\boldlambda\boldmu \cr
  &= (-1)^{|\boldmu||\boldlambda|+|\boldlambda|}\boldmu.
\end{align*}
This proves the first equality in the {\sc (counital coFrobenius)} relation, and the second one follows similarly.
The proof of (ii) is analogous, using Proposition~\ref{prop:counital-coFrob-bialg}.
\end{proof}

\begin{proposition}\label{prop:biunital-coFrob-bialg}
Let $(A,\boldmu,\boldlambda,\boldeta,\boldeps)$ be a biunital coFrobenius bialgebra.
Then in addition to the ones in Propositions~\ref{prop:unital-coFrob-bialg} and~\ref{prop:counital-coFrob-bialg} the following identities hold:
\begin{equation*}
\boldp(\boldmu\otimes 1) = (-1)^{|\boldmu|}\boldp(1\otimes\boldmu),\qquad
(\boldlambda\otimes 1)\boldc = (-1)^{|\boldlambda|}(1\otimes\boldlambda)\boldc,\qquad
\end{equation*}
\begin{align*}
  (-1)^{|\boldlambda|}\boldeps &= \boldp(1\otimes\boldeta) = (-1)^{|\boldmu|}\boldp(\boldeta\otimes 1), \cr
  (-1)^{|\boldlambda||\boldmu|+|\boldmu|}\boldeta &= (\boldeps\otimes 1)\boldc = (-1)^{|\boldlambda|}(1\otimes\boldeps)\boldc.
\end{align*}
\begin{equation*}
  (1\otimes\boldp)(\boldc\otimes 1)
  = (-1)^{|\boldlambda|+|\boldmu|}(\boldp\otimes 1)(1\otimes\boldc)=1.
\end{equation*}
In particular, $(A,\boldmu,\boldlambda,\boldeta,\boldeps)$ is a biunital infinitesimal anti-symmetric bialgebra. 
\qed
\end{proposition}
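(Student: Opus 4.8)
The plan is to derive everything from Propositions~\ref{prop:unital-coFrob-bialg} and~\ref{prop:counital-coFrob-bialg}. The starting point is the observation that the {\sc (biunital coFrobenius)} and {\sc (symmetry)} axioms are exactly the conjunction of the {\sc (unital coFrobenius)} and {\sc (symmetry)} axioms for $(A,\boldmu,\boldlambda,\boldeta)$ with the {\sc (counital coFrobenius)} and {\sc (symmetry)} axioms for $(A,\boldmu,\boldlambda,\boldeps)$, the copairing $\boldc$ and the pairing $\boldp$ being given by the same formulas throughout. Hence $(A,\boldmu,\boldlambda,\boldeta)$ is a unital coFrobenius bialgebra and $(A,\boldmu,\boldlambda,\boldeps)$ is a counital coFrobenius bialgebra, and all identities of those two propositions (and of their proofs) are available. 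In particular $\boldp(\boldmu\otimes 1)=(-1)^{|\boldmu|}\boldp(1\otimes\boldmu)$ is part of Proposition~\ref{prop:counital-coFrob-bialg} and $(\boldlambda\otimes 1)\boldc=(-1)^{|\boldlambda|}(1\otimes\boldlambda)\boldc$ is part of Proposition~\ref{prop:unital-coFrob-bialg}, so the first two displayed identities require nothing further.

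Next I would treat the identities $(-1)^{|\boldlambda|}\boldeps=\boldp(1\otimes\boldeta)=(-1)^{|\boldmu|}\boldp(\boldeta\otimes 1)$ and $(-1)^{|\boldlambda||\boldmu|+|\boldmu|}\boldeta=(\boldeps\otimes 1)\boldc=(-1)^{|\boldlambda|}(1\otimes\boldeps)\boldc$ by inserting the definitions $\boldp=(-1)^{|\boldlambda|}\boldeps\boldmu$ and $\boldc=(-1)^{|\boldlambda||\boldmu|+|\boldmu|}\boldlambda\boldeta$ and then applying the {\sc (unit)} axiom $(-1)^{|\boldmu|}\boldmu(\boldeta\otimes 1)=1=\boldmu(1\otimes\boldeta)$, respectively the {\sc (counit)} axiom $(\boldeps\otimes 1)\boldlambda=1=(-1)^{|\boldlambda|}(1\otimes\boldeps)\boldlambda$; one checks the prefactors come out as stated. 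For the identity $(1\otimes\boldp)(\boldc\otimes 1)=(-1)^{|\boldlambda|+|\boldmu|}(\boldp\otimes 1)(1\otimes\boldc)=1$ I would post-compose the relation $\boldlambda=(1\otimes\boldmu)(\boldc\otimes 1)$ with $1\otimes\boldeps$: by $\boldeps\boldmu=(-1)^{|\boldlambda|}\boldp$ the right-hand side becomes $(-1)^{|\boldlambda|}(1\otimes\boldp)(\boldc\otimes 1)$, while the {\sc (counit)} axiom turns the left-hand side into $(-1)^{|\boldlambda|}$; post-composing $\boldlambda=(-1)^{|\boldmu|}(\boldmu\otimes 1)(1\otimes\boldc)$ with $\boldeps\otimes 1$ handles the second half in the same way.

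It remains to see that $(A,\boldmu,\boldlambda,\boldeta,\boldeps)$ is a biunital infinitesimal anti-symmetric bialgebra. The {\sc (unital infinitesimal relation)} and {\sc (unital anti-symmetry)} hold by Proposition~\ref{prop:unital-coFrob-bialg}, the {\sc (counital infinitesimal relation)} and {\sc (counital anti-symmetry)} by Proposition~\ref{prop:counital-coFrob-bialg}, so all that is missing are the three ``gluing'' identities of Definition~\ref{defi:secondary-biunital}, each equating a $\boldlambda\boldeta$-shaped correction term with an $\boldeps\boldmu$-shaped one. The gluing identity in the {\sc (biunital infinitesimal relation)} I would obtain by equating the two expressions for $(\boldmu\otimes 1)(1\otimes\boldlambda)$ furnished by the two propositions --- $(\boldmu\otimes\boldmu)(1\otimes\boldc\otimes 1)$ from Proposition~\ref{prop:unital-coFrob-bialg} and $(-1)^{|\boldlambda||\boldmu|}(1\otimes\boldp\otimes 1)(\boldlambda\otimes\boldlambda)$ from Proposition~\ref{prop:counital-coFrob-bialg} --- and then substituting the definitions of $\boldc$ and $\boldp$. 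For {\sc (biunital anti-symmetry)}: since the left-hand sides of the unital and counital anti-symmetry relations have identical first two summands and, by the argument in the proof of Proposition~\ref{prop:unital-coFrob-bialg} (and the dual argument for the counital relation), each has its three summands mutually equal, the two third summands must agree --- which is precisely $(\boldmu\tau\otimes\boldmu)(1\otimes\boldlambda\boldeta\otimes 1)=(1\otimes\boldeps\boldmu\otimes 1)(\tau\boldlambda\otimes\boldlambda)$. The same comparison applied to the right-hand sides yields the second gluing identity up to conjugation by $\tau$; bringing it into the stated form $(\boldmu\otimes\boldmu\tau)(1\otimes\boldlambda\boldeta\otimes 1)=(1\otimes\boldeps\boldmu\otimes 1)(\boldlambda\otimes\tau\boldlambda)$ uses the symmetries $\tau\boldc=(-1)^{|\boldlambda|}\boldc$ and $\boldp\tau=(-1)^{|\boldmu|}\boldp$ (equivalently $\tau\boldlambda\boldeta=(-1)^{|\boldlambda|}\boldlambda\boldeta$ and $\boldeps\boldmu\tau=(-1)^{|\boldmu|}\boldeps\boldmu$) together with associativity of $\boldmu$. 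I expect this last step --- and more generally the Koszul sign bookkeeping involved in tracking how a twist $\tau$ passes through $\boldmu$, $\boldlambda$, $\boldc$, $\boldp$ --- to be the only real obstacle; no structural input beyond the identities already in hand is needed, each new identity being a short rearrangement of them.
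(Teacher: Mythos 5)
Your proposal is correct, and it coincides with what the paper intends: Proposition~\ref{prop:biunital-coFrob-bialg} is stated with no written proof (the \qed signals a routine direct computation), and your derivation --- reading off the first identities from Propositions~\ref{prop:unital-coFrob-bialg} and~\ref{prop:counital-coFrob-bialg}, evaluating the definitions of $\boldp$ and $\boldc$ against the unit/counit axioms, composing the coFrobenius relations with $\boldeps\otimes 1$ and $1\otimes\boldeps$, and extracting the three gluing identities of Definition~\ref{defi:secondary-biunital} by comparing the unital and counital expressions for $(\boldmu\otimes 1)(1\otimes\boldlambda)$ and the correction terms in the two anti-symmetry relations --- is exactly that verification. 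The one step you flagged, removing the $\tau$-conjugation in the second anti-symmetry gluing, does close: it reduces to the identity $(1\otimes\boldmu\tau)(\boldlambda\otimes 1)=(-1)^{|\boldlambda|+|\boldmu|}(\boldmu\otimes 1)(1\otimes\tau\boldlambda)$, which follows directly from the two halves of the {\sc (unital coFrobenius)} relation together with the symmetry of $\boldc$, so no further input is needed.
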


It follows from the preceding discussion in this section that the notion of a coFrobenius bialgebra is self-dual
and invariant under shifts. This is summarized in the following lemma, where part (iii) is immediate from the definitions. 

\begin{lemma}\label{lem:biunital-coFrob-invariance}
Let $(A,\boldmu,\boldlambda,\boldeta,\boldeps)$ be a biunital coFrobenius bialgebra with copairing $\boldc$ and pairing $\boldp$. Then:

(i)
$(A^\vee,\boldlambda^\vee,\boldmu^\vee,\boldeps^\vee,\boldeta^\vee)$ is a biunital coFrobenius bialgebra with copairing $\boldp^\vee$ and pairing $\boldc^\vee$.

(ii) $(A[1],\ol\boldmu,\ol\boldlambda,\ol\boldeta,\ol\boldeps)$ is a biunital coFrobenius bialgebra with copairing $\ol\boldc$ and pairing $\ol\boldp$, where
\begin{alignat*}{3}
  &\ol\boldmu = s\boldmu (\omega\otimes\omega),\qquad 
  &&\ol\boldlambda = (s\otimes s)\boldlambda \omega,\cr
  &\ol\boldeta = (-1)^{|\boldmu|}s\boldeta,\qquad 
  &&\ol\boldeps = (-1)^{|\boldlambda|}\boldeps\omega,\cr
  &\ol\boldc = (-1)^{|\boldlambda|}(s\otimes s)\boldc,\qquad
  &&\ol\boldp = (-1)^{|\boldlambda|+1}\boldp (\omega\otimes\omega).
\end{alignat*}
(iii) For any $m,\ell\in\Z$, the tuple $(A,(-1)^m\boldmu,(-1)^\ell\boldlambda,(-1)^m\boldeta,\break (-1)^\ell\boldeps)$ is a biunital coFrobenius bialgebra with copairing $(-1)^{m+\ell}\boldc$ and pairing $(-1)^{m+\ell}\boldp$.
\qed
\end{lemma}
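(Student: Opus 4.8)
The plan is to reduce all three parts to facts already established in this section, so that nothing beyond sign bookkeeping remains. Part~(iii) is immediate from Definition~\ref{defi:biunital-coFrobenius-bialgebra}: the data $\boldmu,\boldlambda,\boldeta,\boldeps$ enter each clause multilinearly and with unchanged degrees, replacing them by $(-1)^m\boldmu,(-1)^\ell\boldlambda,(-1)^m\boldeta,(-1)^\ell\boldeps$ multiplies $\boldc=(-1)^{|\boldlambda||\boldmu|+|\boldmu|}\boldlambda\boldeta$ and $\boldp=(-1)^{|\boldlambda|}\boldeps\boldmu$ each by $(-1)^{m+\ell}$, and every relation acquires only an even power of $(-1)^m$ and of $(-1)^\ell$, which disappears. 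I would simply record this.

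For part~(i) the plan is to combine the duality between unital and counital coFrobenius bialgebras with Proposition~\ref{prop:coFrob-criterion}. A biunital coFrobenius bialgebra is in particular a unital coFrobenius bialgebra, so by the already noted duality its dual $(A^\vee,\boldlambda^\vee,\boldmu^\vee,\boldeta^\vee)$ is a counital coFrobenius bialgebra. Since $\boldlambda$ is counital with counit $\boldeps$, the product $\boldlambda^\vee$ is unital with unit $\boldeps^\vee$ by the dualization conventions of \S\ref{sec:conventions}, so Proposition~\ref{prop:coFrob-criterion}(ii) upgrades $(A^\vee,\boldlambda^\vee,\boldmu^\vee,\boldeta^\vee)$ to the biunital coFrobenius bialgebra $(A^\vee,\boldlambda^\vee,\boldmu^\vee,\boldeps^\vee,\boldeta^\vee)$. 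It then remains to identify its copairing and pairing; unwinding their defining formulas for this structure and using the contravariant composition rule $(\psi\varphi)^\vee=(-1)^{|\varphi||\psi|}\varphi^\vee\psi^\vee$ together with the degree identities $|\boldeta|=-|\boldmu|$ and $|\boldeps|=-|\boldlambda|$ shows they equal $\boldp^\vee$ and $\boldc^\vee$, as claimed. (Finite-dimensionality enters here so that $\iota:A^\vee\otimes A^\vee\to(A\otimes A)^\vee$ is an isomorphism and $\boldmu^{\vee\vee}=\boldmu$, $\boldlambda^{\vee\vee}=\boldlambda$.)

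Part~(ii) follows the same template with shift in place of duality. The notion of unital coFrobenius bialgebra is shift-invariant, so $(A[1],\ol\boldmu,\ol\boldlambda,\ol\boldeta)$ is a unital coFrobenius bialgebra, with copairing the already-computed $\ol\boldc=(-1)^{|\boldlambda|}(s\otimes s)\boldc$; likewise the counital notion is shift-invariant, giving the counital coFrobenius bialgebra $(A[1],\ol\boldmu,\ol\boldlambda,\ol\boldeps)$ with pairing $\ol\boldp=(-1)^{|\boldlambda|+1}\boldp(\omega\otimes\omega)$, and $\ol\boldlambda$ is counital with counit $\ol\boldeps=(-1)^{|\boldlambda|}\boldeps\omega$ by the conventions of \S\ref{sec:conventions}. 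Proposition~\ref{prop:coFrob-criterion}(i) then gives that $(A[1],\ol\boldmu,\ol\boldlambda,\ol\boldeta,\ol\boldeps)$ is a biunital coFrobenius bialgebra, whose copairing and pairing are then forced to be $\ol\boldc$ and $\ol\boldp$ by the formulas already computed above. All the other tuples in the statement consist of the maps displayed there, so nothing further is needed.

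The only step with any friction is the sign bookkeeping: in part~(i), confirming that the copairing and pairing of the dual really are $\boldp^\vee$ and $\boldc^\vee$ requires tracking the interaction of the twist $\tau$ and the canonical map $\iota$ under dualization, via $\iota\tau=\tau^\vee\iota$, $(\varphi\otimes\psi)^\vee=\varphi^\vee\otimes\psi^\vee$, and $s^\vee=\omega$, $\omega^\vee=s$; in part~(ii), the corresponding computations of $\ol\boldc$ and $\ol\boldp$ — which are carried out in the text preceding the lemma — use the analogous bookkeeping for $s,\omega$ and the degree-dependent change of $\tau$ on $A[1]$ recorded in \S\ref{sec:conventions}. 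None of this introduces a new idea; everything is forced by the conventions of \S\ref{sec:conventions}.
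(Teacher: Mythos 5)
Your proposal is correct and follows the same route the paper intends: the paper states the lemma with no separate proof, declaring that it ``follows from the preceding discussion'' (duality of the unital/counital coFrobenius notions, the shift-invariance computations giving $\ol\boldc$ and $\ol\boldp$, Proposition~\ref{prop:coFrob-criterion}), with part~(iii) immediate from the definitions, and your argument assembles exactly these ingredients. The sign checks you indicate for identifying the dual copairing and pairing with $\boldp^\vee$ and $\boldc^\vee$ (via $(\psi\varphi)^\vee=(-1)^{|\varphi||\psi|}\varphi^\vee\psi^\vee$ and $|\boldeta|=-|\boldmu|$, $|\boldeps|=-|\boldlambda|$) do work out, so nothing is missing.
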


\begin{remark}[on terminology]
The term ``coFrobenius" refers to the fact that, in the unital case, the structure involves the product $\boldmu$ and the copairing $\boldc:\bk\to A\otimes^! A$, whereas the definition of Frobenius algebras classically involves a product and a pairing $A\otimes^* A\to \bk$. Counital coFrobenius bialgebras are the dual notion, and involve a coproduct $\boldlambda$ and a pairing $\boldp$. 

The term ``coFrobenius"  was introduced previously by Lin~\cite{I-peng-lin} in an ungraded setting involving a coproduct $\boldlambda$ and a pairing $\boldp$ assumed to be nondegenerate. See also~\cite{Iovanov}. 

Our terminology ``bialgebras" instead of ``algebras" or ``coalgebras" emphasizes the role played by both product and coproduct. It also echoes the bialgebra terminology from~\S\ref{sec:inf-bialg}.

In the case that $|\boldmu|$ and $|\boldlambda|$ are both even, the notion of a biunital coFrobenius bialgebra agrees with that of a symmetric Frobenius algebra in the literature (see e.g.~\cite{Abrams,Kock} for background on symmetric Frobenius algebras and their equivalence to $1+1$ dimensional TQFTs).
\end{remark} 

\begin{remark}[Direct sums]\label{rem:direct-sum}
All the algebraic structures in this and the previous section admit obvious direct sum constructions. E.g. the direct sum of two biunital coFrobenius bialgebras $(A_i,\boldmu_i,\boldlambda_i,\boldeta_i,\boldeps_i)$, $i=1,2$ is $A=A_1\oplus A_2$ with the operations defined for $a_i,b_i\in A_i$ by
\begin{gather*}
  \boldmu\bigl((a_1,a_2)\otimes(b_1,b_2)\bigr) = \bigl(\boldmu_1(a_1\otimes b_1),\boldmu_2(a_2\otimes b_2)\bigr), \cr
  \boldlambda(a_1,a_2) = \boldlambda_1(a_1) + \boldlambda_2(a_2) \in A_1\otimes A_1 + A_2\otimes A_2\subset A\otimes A, \cr
  \boldeta = (\boldeta_1,\boldeta_2), \qquad
  \boldeps(a_1,a_2) = \boldeps_1(a_1) + \boldeps_2(a_2).
\end{gather*}
\end{remark}

\begin{remark}[Involutivity]\label{rem:involutivity}
All the algebraic structures considered in this paper involve a product $\boldmu:A\otimes^* A\to A$ and a coproduct $\boldlambda:A\to A\otimes^! A$. Let us call such a structure {\em involutive} if the canonical map $A\otimes^* A\to A\otimes^! A$ is a (topological) isomorphism and
$$
  \boldmu\boldlambda=0.
$$
(i) The map $A\otimes^* A\to A\otimes^! A$ is an isomorphism if $A$ is linearly compact or discrete (Proposition~\ref{prop:tensor_discrete}). 

(ii) If $2\neq 0$ in the field $\bk$ and the map $A\otimes^* A\to A\otimes^! A$ is an isomorphism, then involutivity holds if $\boldmu$ and $\boldlambda$ are commutative resp.~cocommutative of opposite parity. This applies for example to the structures on symplectic homology and symplectic cohomology in the closed string case (resp.~loop homology and loop cohomology for free loops). In the open string (resp.~based loop) case the structures need not be involutive, see Remark~\ref{rem:involutivity-spheres}.

(iii) For a unital coFrobenius bialgebra with $A\otimes^* A\to A\otimes^! A$ an isomorphism, involutivity is equivalent to $\boldmu\boldc=0$. (One implication follows from $\boldc=\pm\boldlambda\boldeta$, the other one from the unital coFrobenius relation and associativity of $\boldmu$). Under the same hypothesis, for a counital coFrobenius bialgebra, involutivity is equivalent to $\boldp\boldlambda=0$. 

(iv) For Rabinowitz Floer homology, the map $A\otimes^* A\to A\otimes^! A$ is usually not a topological isomorphism, so the above definition of involutivity does not apply. However, the construction in~\cite[\S8]{CO-Tate} shows that involutivity holds for the operations restricted to suitable finite action intervals, so it holds in a ``filtered'' sense.
\end{remark}

\section{Algebraic Poincar\'e duality}\label{sec:algebraic-PD}
  
{\bf Permutation group action. }
The left action of the permutation group $S_n$ on $A^{\otimes n}$ is defined by
$$
   \rho(a_1\otimes\cdots\otimes a_n) = \eps(\rho,a)a_{\rho^{-1}(1)}\otimes\cdots\otimes a_{\rho^{-1}(n)},
$$
where $\eps(\rho,a)$ is the sign for reordering $a_1\cdots a_n$ to $a_{\rho^{-1}(1)}\cdots a_{\rho^{-1}(n)}$ according to the degrees in $A$. For example, the transposition $\tau=(12)$ and the cyclic permutation $\sigma=(123)$ act by
$$
   \tau(a\otimes b) = (-1)^{|a||b|}b\otimes a,\qquad
   \sigma(a\otimes b\otimes c) = (-1)^{(|a|+|b|)|c|}c\otimes a\otimes b. 
$$
Denoting general transpositions by $\tau_{ij}$, we have in $S_3$ the relations
\begin{equation}\label{eq:perm-relations}
    \sigma=\tau_{12}\tau_{23},\qquad \sigma^{-1}=\sigma^2=\tau_{23}\tau_{12}.
\end{equation}
The $S_n$ action extends canonically to $A^{\otimes^* n}$ and $A^{\otimes^! n}$.

\begin{lemma}
(i) In a unital coFrobenius bialgebra $(A,\boldmu,\boldlambda,\boldeta)$ with copairing $\boldc$, the operation 
$$
\boldbeta=(1\otimes \boldmu\otimes 1)(\boldc\otimes \boldc):\bk\to A\otimes^! A\otimes^! A
$$
is cyclically symmetric, i.e. 
$$
   \sigma\boldbeta=\boldbeta.
$$
(ii) In a counital coFrobenius bialgebra $(A,\boldmu,\boldlambda,\boldeps)$ with pairing $\boldp$, the operation 
$$
\boldB=(\boldp\otimes\boldp)(1\otimes\boldlambda\otimes 1):A\otimes^* A\otimes^* A\to \bk
$$
is cyclically symmetric, i.e. 
$$
  \boldB\sigma=\boldB.
$$
\end{lemma}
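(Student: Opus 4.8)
The plan is to establish (i) by a short three-step chain of identities and to obtain (ii) by the analogous computation. Part (i) is the coalgebraic counterpart of the classical fact that in a symmetric Frobenius algebra the trilinear form $a\otimes b\otimes c\mapsto\boldeps(abc)$ is cyclically invariant, and accordingly the proof should use only coassociativity — which is already built into the coFrobenius relation — together with the {\sc (symmetry)} of the copairing, $\tau\boldc=(-1)^{|\boldlambda|}\boldc$.

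First I would record the two alternative expressions for $\boldbeta$ furnished by Proposition~\ref{prop:unital-coFrob-bialg}:
$$
\boldbeta=(1\otimes\boldmu\otimes 1)(\boldc\otimes\boldc)=(\boldlambda\otimes 1)\boldc=(-1)^{|\boldlambda|}(1\otimes\boldlambda)\boldc.
$$
Next I would prove the elementary sign identity
$$
\sigma\,(\boldlambda\otimes 1)=(1\otimes\boldlambda)\,\tau\colon A\otimes A\longrightarrow A\otimes A\otimes A,
$$
where $\tau$ on the right is the twist of $A\otimes A$. This is a direct check on a homogeneous $x\otimes y$: on the left $(\boldlambda\otimes 1)(x\otimes y)=\boldlambda(x)\otimes y$ and then $\sigma$ moves $y$ in front of $\boldlambda(x)$, which has total degree $|x|+|\boldlambda|$, producing the sign $(-1)^{(|x|+|\boldlambda|)|y|}$; on the right $\tau$ contributes $(-1)^{|x||y|}$ and $1\otimes\boldlambda$ contributes $(-1)^{|\boldlambda||y|}$, and $(|x|+|\boldlambda|)|y|=|x||y|+|\boldlambda||y|$. (Using $\sigma=\tau_{12}\tau_{23}$ from~\eqref{eq:perm-relations} one can also read this off from functoriality of the symmetric monoidal structure.) With these two ingredients, (i) is the chain
$$
\sigma\boldbeta=\sigma(\boldlambda\otimes 1)\boldc=(1\otimes\boldlambda)\tau\boldc=(-1)^{|\boldlambda|}(1\otimes\boldlambda)\boldc=\boldbeta,
$$
the outer equalities being the two expressions for $\boldbeta$, the middle-left the sign identity, and the middle-right the {\sc (symmetry)} relation.

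For (ii) I would run the mirror computation. By Proposition~\ref{prop:counital-coFrob-bialg},
$$
\boldB=(\boldp\otimes\boldp)(1\otimes\boldlambda\otimes 1)=(-1)^{|\boldp||\boldmu|}\boldp(\boldmu\otimes 1)=(-1)^{|\boldp||\boldmu|+|\boldmu|}\boldp(1\otimes\boldmu),
$$
and a computation of the same type as above yields $(\boldmu\otimes 1)\sigma^{-1}=\tau(1\otimes\boldmu)$ (the $|\boldmu|$-dependent signs cancel in pairs). Hence
$$
\boldB\sigma^{-1}=(-1)^{|\boldp||\boldmu|}\boldp(\boldmu\otimes 1)\sigma^{-1}=(-1)^{|\boldp||\boldmu|}\boldp\tau(1\otimes\boldmu)=(-1)^{|\boldp||\boldmu|+|\boldmu|}\boldp(1\otimes\boldmu)=\boldB,
$$
using the {\sc (symmetry)} relation $\boldp\tau=(-1)^{|\boldmu|}\boldp$; since $\sigma$ has order $3$ this gives $\boldB\sigma=\boldB$. (Alternatively, (ii) is formally the statement dual to (i): apply (i) to the unital coFrobenius bialgebra $A^\vee$, whose copairing is $\boldp^\vee$, identify $\boldbeta_{A^\vee}$ with a sign times $\boldB^\vee$, and dualize.)

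The only genuine labor is the Koszul-sign bookkeeping: verifying that $\sigma(\boldlambda\otimes 1)=(1\otimes\boldlambda)\tau$ carries no leftover sign, and checking that the signs imported from Propositions~\ref{prop:unital-coFrob-bialg}/\ref{prop:counital-coFrob-bialg} and from the {\sc (symmetry)} relations compose to exactly $+1$ along both chains. Conceptually there is no obstacle.
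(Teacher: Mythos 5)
Your proof of (i) is exactly the paper's argument: the same auxiliary sign identity $\sigma(\boldlambda\otimes 1)=(1\otimes\boldlambda)\tau$, the same two expressions for $\boldbeta$ from Proposition~\ref{prop:unital-coFrob-bialg}, and the same chain of equalities using $\tau\boldc=(-1)^{|\boldlambda|}\boldc$. For (ii) the paper simply says the proof is analogous, and your mirror computation (with $(\boldmu\otimes 1)\sigma^{-1}=\tau(1\otimes\boldmu)$, Proposition~\ref{prop:counital-coFrob-bialg}, and $\boldp\tau=(-1)^{|\boldmu|}\boldp$) is a correct instance of precisely that analogous argument.
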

    
\begin{proof}
(i) A short computation yields
$$
   \sigma(\boldlambda\otimes 1) = (1\otimes\boldlambda)\tau.
$$
Using this together with $\tau\boldc=(-1)^{|\lambda|}\boldc$ and the relations
$$
   \boldbeta = (\boldlambda\otimes 1)\boldc = (-1)^{|\boldlambda|}(1\otimes\boldlambda)\boldc
$$
we find
$$
  \sigma\boldbeta
  = \sigma(\boldlambda\otimes 1)\boldc  
  = (1\otimes\boldlambda)\tau\boldc
  = (-1)^{|\boldlambda|}(1\otimes\boldlambda)\boldc
  = \boldbeta.
$$
The proof of (ii) is analogous.
\end{proof}

\begin{remark} 
(i) In a unital coFrobenius bialgebra\break $(A,\boldmu,\boldlambda,\boldeta)$ such that $\boldlambda$ is cocommutative, we have in addition
$$
   \tau_{12}\boldbeta = (-1)^{|\boldlambda|}\boldbeta.
$$
So in this case $\boldbeta:\bk\to A\otimes^! A\otimes^! A$ is equivariant with respect to $S_3$ acting on $\bk$ by the sign representation if $|\boldlambda|$ is odd, and by the trivial representation if $|\boldlambda|$ is even.

(ii) In a counital coFrobenius bialgebra $(A,\boldmu,\boldlambda,\boldeps)$ such that $\boldmu$ is commutative, we have in addition 
$$
   \boldB\tau_{12} = (-1)^{|\boldmu|}\boldB.
$$
So in this case $\boldB:A\otimes^* A\otimes^* A\to \bk$ is equivariant with respect to $S_3$ acting on $\bk$ by the sign representation if $|\boldmu|$ is odd, and by the trivial representation if $|\boldmu|$ is even.
\end{remark}

In the sequel we will only be concerned with biunital coFrobenius bialgebras. 

Recall the evaluation map $\ev:A^\vee\otimes^* A\to \bk$. We define
the \emph{coevaluation map} $\ev^\vee:\bk\to A\otimes^! A^\vee$ by dualizing $\ev$. 

(i) Given a copairing $\boldc:\bk\to A\otimes^! A$ we denote 
$$
\vec\boldc:A^\vee\to A
$$
the map $\vec\boldc=(\ev\otimes 1)(1\otimes\boldc)$, i.e. $\vec\boldc(f)=(-1)^{|f||\boldc|}(f\otimes 1)\boldc$. 
We say that \emph{$\boldc$ is perfect} if $\vec \boldc$ is a topological isomorphism. 

(ii) Given a pairing $\boldp:A\otimes^* A\to \bk$ we denote 
$$
\vec\boldp:A\to A^\vee
$$
the unique map such that $\boldp=\ev(\vec\boldp\otimes 1)$, i.e. $\vec\boldp(a)(b)=\boldp(a\otimes b)$. 
We say that \emph{$\boldp$ is perfect} if $\vec\boldp$ is a topological isomorphism. 

\begin{lemma} \label{lem:cp-perfect}
Let $\boldp$, $\boldc$ be a pairing and a copairing of opposite degrees $|\boldp|=-|\boldc|$ such that
$$
(1\otimes \boldp)(\boldc\otimes 1) = 1 = (-1)^{|\boldp||\boldc|}(\boldp\otimes 1)(1\otimes\boldc). 
$$ 
Then 
$$
\vec\boldc\vec\boldp=1, \qquad \vec\boldc\vec\boldp=1.
$$
In particular, $\boldc$ and $\boldp$ are both perfect. 
\qed
\end{lemma}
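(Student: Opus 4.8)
The plan is to establish directly the two identities $\vec\boldc\,\vec\boldp=1_A$ and $\vec\boldp\,\vec\boldc=1_{A^\vee}$ by unwinding the definitions of $\vec\boldc$ and $\vec\boldp$ and substituting the two hypotheses one at a time. Perfectness of $\boldc$ and $\boldp$ then follows at once, since these two identities exhibit $\vec\boldc$ and $\vec\boldp$ as mutually inverse, hence bijective, maps. Throughout I would use a Sweedler-type notation $\boldc=\sum_i c_i'\otimes c_i''$ with $|c_i'|+|c_i''|=|\boldc|$, together with the two formulas $\vec\boldp(a)(b)=\boldp(a\otimes b)$ and $\vec\boldc(f)=(-1)^{|f||\boldc|}\sum_i f(c_i')\,c_i''$, both read off from the definitions and the tensor/duality conventions of Section~\ref{sec:conventions}.

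First I would prove $\vec\boldc\,\vec\boldp=1_A$. Evaluating on $a\in A$ yields
$$\vec\boldc\,\vec\boldp(a)=(-1)^{(|a|+|\boldp|)|\boldc|}\sum_i\boldp(a\otimes c_i')\,c_i'',$$
while expanding the second hypothesis $(-1)^{|\boldp||\boldc|}(\boldp\otimes 1)(1\otimes\boldc)=1_A$ on $a$ yields $\sum_i\boldp(a\otimes c_i')\,c_i''=(-1)^{|\boldp||\boldc|+|\boldc||a|}\,a$. Substituting, the accumulated signs square to $+1$ and one reads off $\vec\boldc\,\vec\boldp(a)=a$.

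Next I would prove $\vec\boldp\,\vec\boldc=1_{A^\vee}$. For $f\in A^\vee$ and $b\in A$ one computes
$$\vec\boldp\,\vec\boldc(f)(b)=(-1)^{|f||\boldc|}\sum_i f(c_i')\,\boldp(c_i''\otimes b).$$
The key bookkeeping point is that a summand is nonzero only when $|c_i'|=-|f|$, and for such $i$ the relation $|\boldp|=-|\boldc|$ forces $(-1)^{|f||\boldc|}=(-1)^{|\boldp||c_i'|}$; this lets one move the sign inside the sum and, using $R$-linearity of $f$, rewrite the expression as $f\bigl(\sum_i(-1)^{|\boldp||c_i'|}\boldp(c_i''\otimes b)\,c_i'\bigr)$. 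By the first hypothesis $(1\otimes\boldp)(\boldc\otimes 1)=1_A$ the argument of $f$ is exactly $b$, so $\vec\boldp\,\vec\boldc(f)=f$.

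The only genuinely delicate part is that last sign conversion — turning the ``global'' prefactor $(-1)^{|f||\boldc|}$ into the ``local'' signs $(-1)^{|\boldp||c_i'|}$ — which relies on the degree constraint $|c_i'|=-|f|$ coming from $f(c_i')\neq 0$ together with $|\boldp|=-|\boldc|$; everything else is a mechanical application of the Koszul sign rule and the conventions already fixed. Once both compositions are shown to equal the identity, $\vec\boldc$ and $\vec\boldp$ are two-sided inverses of one another, hence bijective, which is exactly the claim that $\boldc$ and $\boldp$ are perfect.
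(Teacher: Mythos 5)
Your proof is correct: the paper states this lemma without an explicit proof (the \qed marks it as an immediate verification), and your element-wise computation in Sweedler notation with the Koszul sign conventions of \S\ref{sec:conventions} — including reading the statement's second displayed identity as $\vec\boldp\,\vec\boldc=1_{A^\vee}$ despite the typo — is exactly the direct check intended. Both sign manipulations are valid (the global cancellation in $\vec\boldc\,\vec\boldp=1_A$, and the conversion $(-1)^{|f||\boldc|}=(-1)^{|\boldp||c_i'|}$ on the terms with $f(c_i')\neq 0$ using $|c_i'|=-|f|$ and $|\boldp|=-|\boldc|$), and under the standing freeness and finite-dimensionality assumption the sum $\boldc=\sum_i c_i'\otimes c_i''$ is finite, so pulling $f$ inside the sum needs no further justification.
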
 

In view of the last relation in Proposition~\ref{prop:biunital-coFrob-bialg}, this implies

\begin{corollary}\label{cor:perfect}  Given $(A,\boldmu,\boldlambda,\boldeta,\boldeps)$ a biunital coFrobenius bialgebra, the associated pairing and copairing are perfect. \qed
\end{corollary}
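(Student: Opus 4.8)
The plan is to read off the statement from Lemma~\ref{lem:cp-perfect}, applied to the copairing $\boldc$ and the pairing $\boldp$ of the biunital coFrobenius bialgebra. To land in the situation of that lemma I need two things: the degree condition $|\boldp|=-|\boldc|$, and a matching of the signs appearing in the two statements.

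First I would record the relevant degrees. Since $\boldeta$ is a unit for $\boldmu$, the identity $(-1)^{|\boldmu|}\boldmu(\boldeta\otimes 1)=1$ forces $|\boldeta|=-|\boldmu|$; likewise, since $\boldeps$ is a counit for $\boldlambda$, the identity $(\boldeps\otimes 1)\boldlambda=1$ forces $|\boldeps|=-|\boldlambda|$. Hence the copairing $\boldc=(-1)^{|\boldlambda||\boldmu|+|\boldmu|}\boldlambda\boldeta$ has degree $|\boldc|=|\boldlambda|+|\boldeta|=|\boldlambda|-|\boldmu|$, while the pairing $\boldp=(-1)^{|\boldlambda|}\boldeps\boldmu$ has degree $|\boldp|=|\boldeps|+|\boldmu|=|\boldmu|-|\boldlambda|$. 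In particular $|\boldp|=-|\boldc|$, which is the degree hypothesis of Lemma~\ref{lem:cp-perfect}. For the signs, $|\boldp||\boldc|=-(|\boldlambda|-|\boldmu|)^2$, and since $n^2\equiv n\pmod 2$ for every integer $n$ this gives $(-1)^{|\boldp||\boldc|}=(-1)^{|\boldlambda|+|\boldmu|}$.

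With this in hand, the last displayed identity of Proposition~\ref{prop:biunital-coFrob-bialg}, namely
$$
(1\otimes\boldp)(\boldc\otimes 1)=(-1)^{|\boldlambda|+|\boldmu|}(\boldp\otimes 1)(1\otimes\boldc)=1,
$$
becomes exactly the hypothesis $(1\otimes\boldp)(\boldc\otimes 1)=1=(-1)^{|\boldp||\boldc|}(\boldp\otimes 1)(1\otimes\boldc)$ of Lemma~\ref{lem:cp-perfect}. That lemma then applies and shows that $\vec\boldc$ and $\vec\boldp$ are mutually inverse bijections, so $\boldc$ and $\boldp$ are perfect. I do not expect any genuine obstacle: the entire argument is degree and sign bookkeeping, the only delicate point being the reconciliation of the sign $(-1)^{|\boldp||\boldc|}$ in Lemma~\ref{lem:cp-perfect} with the sign $(-1)^{|\boldlambda|+|\boldmu|}$ in Proposition~\ref{prop:biunital-coFrob-bialg}, which reduces to the congruence $n^2\equiv n\pmod 2$.
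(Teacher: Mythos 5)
Your proof is correct and is exactly the paper's argument: the corollary is stated there as an immediate consequence of the last identity in Proposition~\ref{prop:biunital-coFrob-bialg} combined with Lemma~\ref{lem:cp-perfect}, which is precisely what you do. The only difference is that you spell out the degree and sign bookkeeping ($|\boldp|=-|\boldc|$ and $(-1)^{|\boldp||\boldc|}=(-1)^{|\boldlambda|+|\boldmu|}$) that the paper leaves implicit, and that bookkeeping is accurate.
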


\begin{definition} \label{defi:compatibility_with_products}
Let $\varphi:A\to B$ be a graded linear map between graded Tate vector spaces, and let $\boldmu_A$, $\boldmu_B$ be products on $A$ and $B$, respectively $\boldlambda_A$, $\boldlambda_B$ be coproducts on $A$ and $B$. 

(i) We say that $\varphi$ \emph{intertwines the products} if 
$$
\varphi\boldmu_A = (-1)^{|\varphi||\boldmu_A|}\boldmu_B\varphi^{\otimes 2}.
$$ 

(ii) We say that $\varphi$ \emph{intertwines the coproducts} if
$$
\varphi^{\otimes 2} \boldlambda_A = (-1)^{|\varphi||\boldlambda_A|}\boldlambda_B\varphi.
$$ 
\end{definition} 

\begin{remark}
This definition is such that the composition of two graded linear maps which intertwine products/coproducts also intertwines products/coproducts, see Appendix~\ref{app:gradings}. Note, however, that the properties of intertwining products and coproducts \emph{are not} dual to each other (i.e., they do not dualize into each other.) 
The property of intertwining the products would be invariant under shifts in the source or target provided one redefined the shifts as 
$$
\ol\boldmu_A = (-1)^{|\boldmu_A|}s\boldmu_A (s\otimes s)^{-1}
$$
and 
$$
\ol\boldlambda_A=(-1)^{|\boldlambda_A|} (s\otimes s)\boldlambda_A s^{-1}.
$$
The unitality relation would then be shift invariant if we redefined it as
$$
  \boldmu(\boldeta\otimes 1) = 1 = (-1)^{|\boldmu|}\boldmu(1\otimes\boldeta),
$$
and the relation in Lemma~\ref{lem:intertwine-unitality} would become
$$
  \boldeta_B=\varphi\boldeta_A,
$$
in agreement with Appendix~\ref{app:gradings}. 
However, with these conventions, unitality would not dualize to counitality and the notion of a biunital coFrobenius bialgebra would not be self-dual. Since we want to keep this property, we will stick to our earlier conventions for shifts.
\end{remark}

\begin{lemma}\label{lem:intertwine-unitality}
Let $\varphi:A\to B$ be a map between graded Tate vector spaces, let $\boldmu_A$, $\boldmu_B$ be products, and let $\boldlambda_A$, $\boldlambda_B$ be coproducts on $A$ and $B$. Assume that $\varphi$ is a topological isomorphism.

(i) If $\varphi$ intertwines the products and $\boldmu_A$ is unital with unit $\boldeta_A$, then $\boldmu_B$ is unital with unit 
$$
\boldeta_B=(-1)^{|\varphi|}\varphi\boldeta_A.
$$

(ii) If $\varphi$ intertwines the coproducts and $\boldlambda_B$ is counital with counit $\boldeps_B$, then $\boldlambda_A$ is counital with counit 
$$
\boldeps_A=\boldeps_B\varphi. 
$$
\qed
\end{lemma}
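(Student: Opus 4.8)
The plan is to prove each of the two statements in Lemma~\ref{lem:intertwine-unitality} by transporting the (co)unit along $\varphi$ and checking the relevant axiom directly, using the intertwining relation and bijectivity. Since the two parts are dual to each other in spirit, but — as the preceding remark stresses — not literally dual as stated, I would treat them separately rather than deducing one from the other.

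For part (i): set $\boldeta_B=(-1)^{|\varphi|}\varphi\boldeta_A$ and verify $\boldmu_B(1\otimes\boldeta_B)=1$ and $(-1)^{|\boldmu_B|}\boldmu_B(\boldeta_B\otimes 1)=1$ (recalling $|\boldmu_B|=|\boldmu_A|$ since $\varphi$ is degree-homogeneous and bijective, hence $|\varphi\boldmu_A|=|\boldmu_B\varphi^{\otimes2}|$ forces the degrees to match). The key move is: given $b\in B$, write $b=\varphi(a)$ with $a=\varphi^{-1}(b)$, then $\boldmu_B(b\otimes\boldeta_B)=(-1)^{|\varphi|}\boldmu_B(\varphi(a)\otimes\varphi\boldeta_A)$. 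Here one must be careful with the Koszul sign coming from the definition $(\varphi\otimes\varphi)(a\otimes b')=(-1)^{|\varphi||a|}\varphi(a)\otimes\varphi(b')$; applied to $a\otimes\boldeta_A$ this gives $(\varphi\otimes\varphi)(a\otimes\boldeta_A)=(-1)^{|\varphi||a|}\varphi(a)\otimes\varphi\boldeta_A$. Combining with the intertwining identity $\varphi\boldmu_A=(-1)^{|\varphi||\boldmu_A|}\boldmu_B\varphi^{\otimes2}$ and the unitality of $\boldmu_A$, namely $\boldmu_A(1\otimes\boldeta_A)=1$, the signs should collapse and yield $\boldmu_B(1\otimes\boldeta_B)=\varphi\varphi^{-1}=1$; the left-unit relation is analogous, using $(-1)^{|\boldmu_A|}\boldmu_A(\boldeta_A\otimes 1)=1$ and picking up an extra $(-1)^{|\boldmu_B|}$.

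For part (ii): set $\boldeps_A=\boldeps_B\varphi$ and verify $(\boldeps_A\otimes 1)\boldlambda_A=1$ and $(-1)^{|\boldlambda_A|}(1\otimes\boldeps_A)\boldlambda_A=1$. Here I would start from $\boldlambda_A=(-1)^{|\varphi||\boldlambda_A|}(\varphi^{\otimes2})^{-1}\boldlambda_B\varphi$ (which follows by applying $(\varphi^{\otimes2})^{-1}$ to the intertwining relation — note $(\varphi^{\otimes2})^{-1}=(\varphi^{-1})^{\otimes2}$ up to a sign I would pin down from Appendix~\ref{app:gradings}, but since $|\varphi^{-1}|=-|\varphi|$ and we compose it with $\varphi$ the degrees cancel), then compute $(\boldeps_A\otimes 1)\boldlambda_A=(\boldeps_B\varphi\otimes 1)(\varphi^{\otimes2})^{-1}\boldlambda_B\varphi$ up to sign. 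The middle composite $(\boldeps_B\varphi\otimes 1)(\varphi^{-1})^{\otimes2}$ should simplify to $(\boldeps_B\otimes\varphi^{-1})$ (again up to a Koszul sign), so the whole thing becomes $(\boldeps_B\otimes 1)\boldlambda_B\cdot(\varphi^{-1}\varphi)$ — wait, more carefully, one rearranges to land $\varphi^{-1}$ on the second tensor factor and cancel it against something; the cleanest path is to note that $(\boldeps_B\otimes 1)$ acting after $(\varphi^{-1}\otimes\varphi^{-1})$ equals $\varphi^{-1}\circ(\boldeps_B\otimes 1)\circ(1\otimes\varphi)$ up to sign, so one recovers $(\boldeps_B\otimes 1)\boldlambda_B=1$ on $B$ and is left with $\varphi^{-1}\varphi=1$. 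I would reconcile all signs against the counitality convention $(\boldeps_B\otimes 1)\boldlambda_B=1=(-1)^{|\boldlambda_B|}(1\otimes\boldeps_B)\boldlambda_B$ and the fact that $|\boldlambda_A|=|\boldlambda_B|$.

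The main obstacle will be \emph{sign bookkeeping}, not any conceptual difficulty: the paper uses the convention $(\varphi\otimes\psi)(a\otimes b)=(-1)^{|\psi||a|}\varphi(a)\otimes\psi(b)$ together with degree-shifted dual and intertwining conventions, so every rearrangement of $\varphi$, $\varphi^{\otimes2}$, $\boldeps_B$, and $\boldlambda_B$ past one another generates a Koszul sign that must be tracked precisely — particularly the interaction between $|\varphi|$, $|\varphi^{-1}|=-|\varphi|$, and $|\boldeps_B|=-|\boldlambda_B|$. I would organize the computation so that all the $\varphi$'s and their inverses are pushed to the outside and cancel in pairs, leaving the bare (co)unitality axiom for $\boldmu_B$ (resp.\ $\boldlambda_B$) in the middle; if the signs in the claimed formulas $\boldeta_B=(-1)^{|\varphi|}\varphi\boldeta_A$ and $\boldeps_A=\boldeps_B\varphi$ are correct (and they are dictated precisely so that these cancellations work, cf.\ Appendix~\ref{app:gradings}), everything goes through. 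Bijectivity of $\varphi$ is used only to write $\varphi^{-1}$ and to conclude the composites equal the identity rather than merely being one-sided.
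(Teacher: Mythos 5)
Your overall strategy---transport the (co)unit along $\varphi$, feed the unit/counit axiom through the intertwining relation, push all copies of $\varphi$ to the outside and cancel them using bijectivity---is exactly the direct verification the paper has in mind (the lemma is stated with no written proof), and it does go through. However, there is a concrete error in the degree bookkeeping on which you explicitly lean. The intertwining relations do \emph{not} force $|\boldmu_B|=|\boldmu_A|$ or $|\boldlambda_B|=|\boldlambda_A|$: comparing degrees in $\varphi\boldmu_A=(-1)^{|\varphi||\boldmu_A|}\boldmu_B\varphi^{\otimes 2}$ gives $|\varphi|+|\boldmu_A|=|\boldmu_B|+2|\varphi|$, i.e.\ $|\boldmu_B|=|\boldmu_A|-|\varphi|$, and likewise $\varphi^{\otimes 2}\boldlambda_A=(-1)^{|\varphi||\boldlambda_A|}\boldlambda_B\varphi$ forces $|\boldlambda_B|=|\boldlambda_A|+|\varphi|$ (which is also what makes $|\boldeta_B|=-|\boldmu_B|$ and $|\boldeps_A|=-|\boldlambda_A|$ come out right).

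This is not a cosmetic point, because it enters precisely the step you single out as the crux. In (i), applying the intertwining relation to $\boldeta_A\otimes 1$ and using $(-1)^{|\boldmu_A|}\boldmu_A(\boldeta_A\otimes 1)=1$ yields, after the Koszul signs are collected, $\boldmu_B(\boldeta_B\otimes 1)=(-1)^{|\boldmu_A|+|\varphi|}\cdot 1$; this agrees with the required left-unit identity $(-1)^{|\boldmu_B|}\boldmu_B(\boldeta_B\otimes 1)=1$ only because $|\boldmu_B|\equiv|\boldmu_A|+|\varphi| \pmod 2$. With your assumption $|\boldmu_B|=|\boldmu_A|$ you would be off by $(-1)^{|\varphi|}$ whenever $|\varphi|$ is odd---and the odd case is the relevant one in the paper's applications, e.g.\ $\varphi=\vec\boldp=PD$ of degree $-n$ for $n$ odd in \S\ref{sec:manifolds}, or $\vec\boldc$ in Corollary~\ref{cor:coFrob-criterion}. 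The same correction $|\boldlambda_B|=|\boldlambda_A|+|\varphi|$ (together with $|\boldeps_B|=-|\boldlambda_B|$) is what makes the second counit identity $(-1)^{|\boldlambda_A|}(1\otimes\boldeps_A)\boldlambda_A=1$ close up in (ii). Once these degree relations are corrected, your computation works exactly as planned; the remaining points you flag (e.g.\ $(\varphi^{\otimes 2})^{-1}=(-1)^{|\varphi|}(\varphi^{-1})^{\otimes 2}$) are handled correctly by your stated intention to pin them down.
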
 

\begin{proposition}\label{prop:biunital-coFrob-transpose}
Let $(A,\boldmu,\boldlambda,\boldeta,\boldeps)$ be a biunital coFrobenius bialgebra
with associated pairing $\boldp$ and copairing $\boldc$.
Then $(A,\boldmu\tau,\tau\boldlambda,(-1)^{|\boldmu|}\boldeta,$ $(-1)^{|\boldlambda|}\boldeps)$ is a biunital coFrobenius bialgebra
with associated pairing $(-1)^{|\boldlambda|}\boldp\tau$ and copairing $(-1)^{|\boldmu|}\tau\boldc$.
\end{proposition}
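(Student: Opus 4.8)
The plan is to deduce this from Proposition~\ref{prop:coFrob-criterion}(i). Throughout write $\boldmu'=\boldmu\tau$, $\boldlambda'=\tau\boldlambda$, $\boldeta'=(-1)^{|\boldmu|}\boldeta$ and $\boldeps'=(-1)^{|\boldlambda|}\boldeps$; these have the same degrees as their unprimed versions. It suffices to check that $(A,\boldmu',\boldlambda',\boldeta')$ is a unital coFrobenius bialgebra and that $\boldlambda'$ is counital with counit $\boldeps'$: Proposition~\ref{prop:coFrob-criterion}(i) then promotes this to a biunital coFrobenius bialgebra $(A,\boldmu',\boldlambda',\boldeta',\boldeps')$, and a direct sign computation identifies its associated copairing $(-1)^{|\boldlambda'||\boldmu'|+|\boldmu'|}\boldlambda'\boldeta'=(-1)^{|\boldmu|}\tau\boldc$ and pairing $(-1)^{|\boldlambda'|}\boldeps'\boldmu'=(-1)^{|\boldlambda|}\boldp\tau$, which are exactly the ones in the statement.

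The routine parts are as follows. Associativity of $\boldmu'=\boldmu\tau$ is the standard graded ``opposite product'' identity, which uses only associativity of $\boldmu$, $\tau^2=1$, and the braid relation $(\tau\otimes 1)(1\otimes\tau)(\tau\otimes 1)=(1\otimes\tau)(\tau\otimes 1)(1\otimes\tau)$; coassociativity of $\boldlambda'=\tau\boldlambda$ is its mirror image. That $\boldeta'$ is a unit for $\boldmu'$ and $\boldeps'$ a counit for $\boldlambda'$ is a one-line check: pushing $\tau$ past $\boldeta$ (resp. $\boldeps$) in the unit (resp. counit) relation for $\boldmu$ (resp. $\boldlambda$) introduces precisely the corrective sign $(-1)^{|\boldmu|}$ (resp. $(-1)^{|\boldlambda|}$). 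The {\sc (symmetry)} relation for the new copairing $\boldc'=(-1)^{|\boldmu|}\tau\boldc$ is immediate from $\tau^2=1$ and the original relation $\tau\boldc=(-1)^{|\boldlambda|}\boldc$: both $\tau\boldc'$ and $(-1)^{|\boldlambda'|}\boldc'$ equal $(-1)^{|\boldmu|}\boldc$.

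The one substantive point is the {\sc (unital coFrobenius)} relation for $(A,\boldmu',\boldlambda',\boldeta')$, i.e. $\boldlambda'=(1\otimes\boldmu')(\boldc'\otimes 1)=(-1)^{|\boldmu'|}(\boldmu'\otimes 1)(1\otimes\boldc')$. For this I would start from the identities already proved in the course of Proposition~\ref{prop:unital-coFrob-bialg}, namely $\tau\boldlambda=(-1)^{|\boldlambda|}(\boldmu\tau\otimes 1)(1\otimes\boldc)=(-1)^{|\boldmu|+|\boldlambda|}(1\otimes\boldmu\tau)(\boldc\otimes 1)$, substitute $\boldc=(-1)^{|\boldlambda|}\tau\boldc$ coming from {\sc (symmetry)}, and recognize $(-1)^{|\boldmu|}\tau\boldc$ as $\boldc'$; the accumulated signs cancel and the two desired expressions for $\boldlambda'$ drop out. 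I expect no conceptual difficulty here, only sign bookkeeping, and even that is tame because the transpose acts on the copairing (and on the pairing) merely through $\tau$, which by {\sc (symmetry)} is multiplication by a sign, so the genuinely structural identities are preserved almost by inspection. If one prefers, the whole argument can instead be run through Proposition~\ref{prop:counital-coFrob-bialg} and Proposition~\ref{prop:coFrob-criterion}(ii), proving first that $(A,\boldmu',\boldlambda',\boldeps')$ is a counital coFrobenius bialgebra with $\boldmu'$ unital.
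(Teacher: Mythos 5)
Your argument is correct, but it is organized differently from the paper's. The paper proves Proposition~\ref{prop:biunital-coFrob-transpose} by a self-contained direct verification of \emph{all} the axioms for the transposed data: associativity of $\boldmu\tau$ and coassociativity of $\tau\boldlambda$ via the permutation identities $\tau(\boldmu\otimes 1)=(1\otimes\boldmu)\sigma$, $\sigma(\boldlambda\otimes 1)=(1\otimes\boldlambda)\tau$ and the $S_3$ relations, and then \emph{both} halves of the coFrobenius relation (the one for $\tau\boldlambda$ in terms of $(-1)^{|\boldmu|}\tau\boldc$ and the one for $\boldmu\tau$ in terms of $(-1)^{|\boldlambda|}\boldp\tau$), using $\sigma(\boldc\otimes 1)=1\otimes\boldc$ and $(1\otimes\boldp)\sigma=\boldp\otimes 1$. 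You instead verify only the unital side — unit, associativity, symmetry of $\boldc'=(-1)^{|\boldmu|}\tau\boldc$, and the unital coFrobenius relation, which indeed drops out of the auxiliary identities $\tau\boldlambda=(-1)^{|\boldlambda|}(\boldmu\tau\otimes 1)(1\otimes\boldc)=(-1)^{|\boldmu|+|\boldlambda|}(1\otimes\boldmu\tau)(\boldc\otimes 1)$ from Proposition~\ref{prop:unital-coFrob-bialg} together with $\boldc=(-1)^{|\boldlambda|}\tau\boldc$ (I checked the signs: both halves come out exactly) — plus counitality of $\tau\boldlambda$ with counit $(-1)^{|\boldlambda|}\boldeps$, and then you invoke Proposition~\ref{prop:coFrob-criterion}(i) to obtain the counital coFrobenius relation and the full biunital structure for free; your identifications $(-1)^{|\boldlambda'||\boldmu'|+|\boldmu'|}\boldlambda'\boldeta'=(-1)^{|\boldmu|}\tau\boldc$ and $(-1)^{|\boldlambda'|}\boldeps'\boldmu'=(-1)^{|\boldlambda|}\boldp\tau$ are also right. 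What your route buys is economy: roughly half the verifications, by leaning on the criterion and on the identities already recorded earlier (and this reliance is not circular, since those results precede the proposition). What the paper's route buys is self-containedness and, as a by-product, the explicit permutation identities \eqref{eq:tau-sigma-mu}, \eqref{eq:tau-sigma-lambda}, $\sigma(\boldc\otimes 1)=1\otimes\boldc$, $(1\otimes\boldp)\sigma=\boldp\otimes 1$, which are reused elsewhere (e.g. in the proof of Theorem~\ref{thm:algebraic_Poincare_duality} via Corollary~\ref{cor:coFrob-criterion}). If you write your version up, do spell out the "standard graded opposite product" check for $\boldmu\tau$ (the paper does it in four lines), since in the graded setting the sign $(-1)^{|\boldmu|}$ in the associativity convention is exactly what that computation must reproduce.
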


\begin{proof}
For associativity, one first verifies the identities
\begin{equation}\label{eq:tau-sigma-mu}
  \tau(\boldmu\otimes 1)=(1\otimes\boldmu)\sigma,\qquad
  \tau(1\otimes\boldmu)=(\boldmu\otimes 1)\sigma^{-1}
\end{equation}
(where the second one is a formal consequence of the first one). Using these, we compute
\begin{align*}
  \boldmu\tau(\boldmu\tau\otimes 1)
  &= \boldmu\tau(\boldmu\otimes 1)(\tau\otimes 1) \cr
  &= (-1)^{|\boldmu|}\boldmu(\boldmu\otimes 1)\sigma(\tau\otimes 1) \cr
  &= (-1)^{|\boldmu|}\boldmu\tau(1\otimes\boldmu)\sigma^2(\tau\otimes 1) \cr
  &= (-1)^{|\boldmu|}\boldmu\tau(1\otimes\boldmu\tau)(1\otimes\tau)\sigma^2(\tau\otimes 1) \cr
  &= (-1)^{|\boldmu|}\boldmu\tau(1\otimes\boldmu\tau),
\end{align*}
where the last equality follows from the last relation in~\eqref{eq:perm-relations}.   
For unitality, one verifies
$$
  \tau(1\otimes\boldeta)=\boldeta\otimes 1
$$
and computes
\begin{align*}
\boldmu\tau(1\otimes(-1)^{|\boldmu|}\boldeta) &= (-1)^{|\boldmu|}\boldmu(\boldeta\otimes 1) = 1, \cr
\boldmu\tau((-1)^{|\boldmu|}\boldeta\otimes 1) &= (-1)^{|\boldmu|}\boldmu(1\otimes\boldeta) = (-1)^{|\boldmu|}.
\end{align*}
The proofs of coassociativity and counitality are analogous, using the identity
\begin{equation}\label{eq:tau-sigma-lambda}
  \sigma(\boldlambda\otimes 1)=(1\otimes\boldlambda)\tau.
\end{equation}
The copairing $\boldc$ is determined by
$$
  \boldc = (-1)^{|\boldlambda||\boldmu|+|\boldmu|}\boldlambda\boldeta,
$$
and applying $(-1)^{|\boldmu|}\tau$ yields the new copairing
$$
  (-1)^{|\boldmu|}\tau\boldc = (-1)^{|\boldlambda||\boldmu|+|\boldmu|}(\tau\boldlambda)\bigl((-1)^{|\boldmu|}\boldeta\bigr).
$$
The coFrobenius relation for $\boldlambda$ is
$$
  \boldlambda = (1\otimes\boldmu)(\boldc\otimes 1) = (-1)^{|\boldmu|}(\boldmu\otimes 1)(1\otimes\boldc).
$$
Applying $\tau$ to its second half yields
\begin{align*}
  \tau\boldlambda
  &= (-1)^{|\boldmu|}\tau(\boldmu\otimes 1)(1\otimes\boldc) \cr
  &= (-1)^{|\boldmu|}\tau(\boldmu\otimes 1)\sigma(\boldc\otimes 1) \cr
  &= (-1)^{|\boldmu|}(1\otimes\boldmu)\sigma^2(\boldc\otimes 1) \cr
  &= (-1)^{|\boldmu|}(1\otimes\boldmu)(1\otimes\tau)(\tau\otimes 1)(\boldc\otimes 1) \cr
  &= (1\otimes\boldmu\tau)\bigl((-1)^{|\boldmu|}\tau\boldc\otimes 1\bigr).
\end{align*}
Here the second equality follows from the identity
$$
  \sigma(\boldc\otimes 1)=1\otimes\boldc,
$$ 
the third one from~\eqref{eq:tau-sigma-mu}, and the fourth one from~\eqref{eq:perm-relations}.
This proves the first half of the coFrobenius relation for $\tau\boldlambda$, and the second half follows analogously.
The pairing $\boldp$ is determined by
$$
  \boldp = (-1)^{|\boldlambda|}\boldeps\boldmu,
$$
and applying $(-1)^{|\boldlambda|}\tau$ yields the new pairing
$$
  (-1)^{|\boldlambda|}\boldp\tau = (-1)^{|\boldlambda|}\bigl((-1)^{|\boldlambda|}\boldeps\bigr)(\boldmu\tau).
$$
The coFrobenius relation for $\boldmu$ is
$$
  (-1)^{|\boldmu||\boldlambda|}\boldmu = (1\otimes\boldp)(\boldlambda\otimes 1)
  = (-1)^{|\boldlambda|}(\boldp\otimes 1)(1\otimes\boldlambda).
$$
Applying $\tau$ to its second half yields
\begin{align*}
  (-1)^{|\boldmu||\boldlambda|}\boldmu\tau
  &= (-1)^{|\boldlambda|}(\boldp\otimes 1)(1\otimes\boldlambda)\tau \cr
  &= (-1)^{|\boldlambda|}(1\otimes\boldp)\sigma(1\otimes\boldlambda)\tau \cr
  &= (-1)^{|\boldlambda|}(1\otimes\boldp)\sigma^2(\boldlambda\otimes 1) \cr
  &= (-1)^{|\boldlambda|}(1\otimes\boldp)(1\otimes\tau)(\tau\otimes 1)(\boldlambda\otimes 1) \cr
  &= \bigl(1\otimes(-1)^{|\boldlambda|}\boldp\tau\bigr)(\tau\boldlambda\otimes 1).
\end{align*}
Here the second equality follows from the identity
$$
  (1\otimes\boldp)\sigma = \boldp\otimes 1,
$$ 
the third one from~\eqref{eq:tau-sigma-lambda}, and the fourth one from~\eqref{eq:perm-relations}.
This proves the first half of the coFrobenius relation for $\boldmu\tau$, and the second half follows analogously.
The symmetry relation is obvious.
\end{proof}

\begin{theorem}[Algebraic Poincar\'e duality] \label{thm:algebraic_Poincare_duality}
Let $(A,\boldmu,\boldlambda,\boldeta,\boldeps)$ be a biunital coFrobenius bialgebra with pairing $\boldp$ and copairing $\boldc$.
Then the mutually inverse maps
$$
\vec\boldp:A\to A^\vee,\qquad \vec\boldc:A^\vee\to A
$$
realize isomorphisms of biunital coFrobenius bialgebras
\begin{align*}
(A,\boldmu,\boldlambda,& \boldeta,\boldeps) \\
& \simeq (A^\vee,(-1)^{|\boldlambda|}\boldlambda^\vee\tau, (-1)^{|\boldmu||\boldlambda|+|\boldlambda|}\tau\boldmu^\vee,\boldeps^\vee,(-1)^{|\boldmu||\boldlambda|+|\boldlambda|+|\boldmu|}\boldeta^\vee),
\end{align*}
i.e. they intertwine the products, they intertwine the coproducts, and they preserve the units and counits. 
\end{theorem}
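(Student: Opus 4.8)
The plan is to verify that the bijection $\vec\boldp\colon A\to A^\vee$ realizes the asserted isomorphism; since $\vec\boldc=\vec\boldp^{-1}$ --- this follows from Lemma~\ref{lem:cp-perfect}, the Corollary that the pairing and copairing of a biunital coFrobenius bialgebra are perfect, and the last relation of Proposition~\ref{prop:biunital-coFrob-bialg} --- the statements for $\vec\boldc$ then hold automatically, as it is the inverse isomorphism. As a preliminary observation, the target tuple on $A^\vee$ is itself a biunital coFrobenius bialgebra: start from the dual $(A^\vee,\boldlambda^\vee,\boldmu^\vee,\boldeps^\vee,\boldeta^\vee)$ of Lemma~\ref{lem:biunital-coFrob-invariance}(i), in which the product is $\boldlambda^\vee$ (of degree $|\boldlambda|$) and the coproduct is $\boldmu^\vee$ (of degree $|\boldmu|$), apply the transpose construction of Proposition~\ref{prop:biunital-coFrob-transpose}, and then apply the sign rescaling of Lemma~\ref{lem:biunital-coFrob-invariance}(iii) with $m=|\boldlambda|$ and $\ell=|\boldmu|\,|\boldlambda|+|\boldlambda|$; matching signs reproduces precisely the product, coproduct, unit and counit of the statement, and shows in passing that the pairing and copairing of the target are the appropriate twisted duals of $\boldc$ and $\boldp$.

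The core of the argument is to check that $\vec\boldp$ intertwines the products and the coproducts, which I would do by evaluating on elements. Fix $a,b\in A$ and test the product identity on an arbitrary $c\in A$. On the one hand, $\bigl(\vec\boldp\,\boldmu(a\otimes b)\bigr)(c)=\boldp\bigl(\boldmu(a\otimes b)\otimes c\bigr)$, which by $\boldp=(-1)^{|\boldlambda|}\boldeps\boldmu$ and associativity of $\boldmu$ equals $\pm\,\boldp\bigl(a\otimes\boldmu(b\otimes c)\bigr)$. On the other hand, expanding $\bigl((-1)^{|\boldlambda|}\boldlambda^\vee\tau\bigr)\bigl(\vec\boldp(a)\otimes\vec\boldp(b)\bigr)(c)$ through the definition of $\boldlambda^\vee$ (via the map $\iota$) and writing $\boldlambda(c)=\sum_i u_i\otimes v_i$, the coFrobenius relation $\boldmu=(-1)^{|\boldmu|\,|\boldlambda|+|\boldlambda|}(\boldp\otimes 1)(1\otimes\boldlambda)$ contracts $\boldlambda(c)$ against the factor coming from $\vec\boldp(b)$ and again yields $\pm\,\boldp\bigl(a\otimes\boldmu(b\otimes c)\bigr)$. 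The coproduct identity is entirely parallel: pairing $\vec\boldp^{\otimes 2}\boldlambda(a)$ and $(-1)^{|\boldmu|\,|\boldlambda|+|\boldlambda|}\tau\boldmu^\vee\vec\boldp(a)$ against an arbitrary $b\otimes c$ through $\iota$, and using the other half of the coFrobenius relation, $\boldmu=(-1)^{|\boldmu|\,|\boldlambda|}(1\otimes\boldp)(\boldlambda\otimes 1)$, together with associativity of $\boldmu$, reduces both expressions to $\pm\,\boldp\bigl(\boldmu(a\otimes c)\otimes b\bigr)=\pm\,\boldp\bigl(a\otimes\boldmu(c\otimes b)\bigr)$. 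In both cases the identities are forced; the only point to verify is that the resulting sign is the one prescribed in Definition~\ref{defi:compatibility_with_products}.

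With the intertwining properties in hand, the units and counits follow from Lemma~\ref{lem:intertwine-unitality}. Since $\vec\boldp$ is bijective and intertwines the products and $\boldeta$ is the unit of $\boldmu$, the target product is unital with unit $(-1)^{|\boldp|}\vec\boldp\,\boldeta$; evaluating $\vec\boldp\,\boldeta$ on $c\in A$ via $\boldmu(\boldeta\otimes c)=(-1)^{|\boldmu|}c$ and $\boldp=(-1)^{|\boldlambda|}\boldeps\boldmu$ gives $\vec\boldp\,\boldeta=(-1)^{|\boldlambda|+|\boldmu|}\boldeps^\vee$, so this unit equals $\boldeps^\vee$. Dually, Lemma~\ref{lem:intertwine-unitality}(ii) applied to $\vec\boldc\colon A^\vee\to A$ --- which intertwines the coproducts, $\boldlambda$ being counital with counit $\boldeps$ --- shows that the target coproduct is counital with counit $\boldeps\circ\vec\boldc$; using the identity $(1\otimes\boldeps)\boldc=(-1)^{|\boldlambda|\,|\boldmu|+|\boldmu|+|\boldlambda|}\boldeta$ from Proposition~\ref{prop:biunital-coFrob-bialg}, together with the fact that $\boldeps\circ\vec\boldc$ is supported in the single degree where $|f|=|\boldmu|$, one identifies this counit with $(-1)^{|\boldmu|\,|\boldlambda|+|\boldlambda|+|\boldmu|}\boldeta^\vee$.

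I expect the only genuine difficulty to be the sign bookkeeping. Every step above is forced, but controlling the Koszul signs through the twist maps, the pairing $\iota$, the dual maps $\boldmu^\vee$ and $\boldlambda^\vee$, the canonical identifications $R^\vee\cong R$ and $A\cong A^{\vee\vee}$, and the degree relations $|\boldp|=|\boldmu|-|\boldlambda|=-|\boldc|$ is delicate. A practical way to organize the computation is to record once and for all the auxiliary identities $\vec\boldp\,\vec\boldc=\vec\boldc\,\vec\boldp=1$, the symmetries $\boldp\tau=(-1)^{|\boldmu|}\boldp$ and $\tau\boldc=(-1)^{|\boldlambda|}\boldc$, and the move-$\tau$ relations~\eqref{eq:move-tau}, \eqref{eq:tau-sigma-mu} and~\eqref{eq:tau-sigma-lambda}, and then feed them mechanically into the four verifications; the graphical calculus of Section~\ref{sec:inf-bialg} can also help track the combinatorics.
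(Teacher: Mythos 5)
Your proposal is correct and follows essentially the same route as the paper: reduce to $\vec\boldp$ using that intertwining is compatible with composition and $\vec\boldc=\vec\boldp^{-1}$, identify the target structure on $A^\vee$ via Lemma~\ref{lem:biunital-coFrob-invariance}(i), Proposition~\ref{prop:biunital-coFrob-transpose} and Lemma~\ref{lem:biunital-coFrob-invariance}(iii) with exactly the sign choices $m=|\boldlambda|$, $\ell=|\boldmu||\boldlambda|+|\boldlambda|$, and then verify the intertwining by direct computation. The paper leaves that last computation unwritten, so your element-level sketch (with unit/counit handled through Lemma~\ref{lem:intertwine-unitality}) just fills in the same "direct computation" the authors allude to.
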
 

\begin{proof}
The proof is by direct computation. 
Note that, since $\vec\boldc$ is the inverse of $\vec\boldp$ and the property of intertwining the products/coproducts is compatible with compositions, it is enough to prove that $\vec\boldp$ is an isomorphism of biunital coFrobenius bialgebras in order to infer the same for $\vec\boldc$. 
That the operations in $A^\vee$ in the theorem indeed define the structure of a biunital coFrobenius bialgebra can be seen as follows.
First, by Lemma~\ref{lem:biunital-coFrob-invariance}(i), $(A^\vee,\boldlambda^\vee,\boldmu^\vee,\boldeps^\vee,\boldeta^\vee)$ is a biunital coFrobenius bialgebra.
Next, by Proposition~\ref{prop:biunital-coFrob-transpose}, $(A^\vee,\boldlambda^\vee\tau,\tau\boldmu^\vee,(-1)^{|\boldlambda|}\boldeps^\vee,(-1)^{|\boldmu|}\boldeta^\vee)$ is a biunital coFrobenius bialgebra. 
Finally, by Lemma~\ref{lem:biunital-coFrob-invariance}(iii),
$$(A^\vee,(-1)^{|\boldlambda|}\boldlambda^\vee\tau, (-1)^{|\boldmu||\boldlambda|+|\boldlambda|}\tau\boldmu^\vee,\boldeps^\vee,(-1)^{|\boldmu||\boldlambda|+|\boldlambda|+|\boldmu|}\boldeta^\vee)$$
is a biunital coFrobenius bialgebra.
\end{proof}

The following is a useful criterion for a biunital coFrobenius bialgebra. 

\begin{corollary}\label{cor:coFrob-criterion}
(i) Let $(A,\boldmu,\boldlambda,\boldeta)$ be a unital coFrobenius bialgebra such that the induced map $\vec\boldc:A^\vee\to A$ is a topological isomorphism. Then 
$$
\boldeps=(-1)^{|\boldmu||\boldlambda|+|\boldlambda|+|\boldmu|}\boldeta^\vee\vec\boldc\,^{-1}
$$ 
makes $(A,\boldmu,\boldlambda,\boldeta,\boldeps)$ a biunital coFrobenius bialgebra.  

(ii) Let $(A,\boldmu,\boldlambda,\boldeps)$ be a counital coFrobenius bialgebra such that the induced map $\vec\boldp:A\to A^\vee$ is a topological isomorphism. Then 
$$
\boldeta=(-1)^{|\boldp|}{\vec\boldp}\,^{-1}(\boldeps^\vee)
$$
makes $(A,\boldmu,\boldlambda,\boldeta,\boldeps)$ a biunital coFrobenius bialgebra. 
\end{corollary}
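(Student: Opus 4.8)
We prove (i); part (ii) is the mirror statement, obtained by applying (i) to the dual unital coFrobenius bialgebra $(A^\vee,\boldlambda^\vee,\boldmu^\vee,\boldeps^\vee)$ --- whose copairing is $\boldp^\vee$ and is perfect precisely because $\vec\boldp$ is assumed bijective --- and carrying the resulting counit back through $\boldmu^{\vee\vee}=\boldmu$, $\boldlambda^{\vee\vee}=\boldlambda$; equivalently, one repeats the argument below with the roles of product/coproduct and of unit/counit interchanged, using Proposition~\ref{prop:coFrob-criterion}(ii) in place of (i). Now by Proposition~\ref{prop:coFrob-criterion}(i), to prove (i) it is enough to show that $\boldlambda$ is counital with the stated counit $\boldeps$; the pairing $\boldp=(-1)^{|\boldlambda|}\boldeps\boldmu$ and the remaining biunital coFrobenius data then come for free. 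So the whole task reduces to verifying $(\boldeps\otimes 1)\boldlambda=1$ and $(-1)^{|\boldlambda|}(1\otimes\boldeps)\boldlambda=1$.

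The plan is to feed the two forms $\boldlambda=(1\otimes\boldmu)(\boldc\otimes 1)=(-1)^{|\boldmu|}(\boldmu\otimes 1)(1\otimes\boldc)$ of the {\sc (unital coFrobenius)} relation into these. Substituting the first into $(\boldeps\otimes 1)\boldlambda$ and commuting $\boldeps$ through $\boldmu$ --- the Koszul sign being $(-1)^{|\boldmu||\boldlambda|}$ on the components where $\boldeps$ is nonzero, as $\boldeps$ pairs only with degree-$|\boldlambda|$ elements --- collapses $(\boldeps\otimes 1)\boldlambda$ to left multiplication by the element $(\boldeps\otimes 1)\boldc=(-1)^{|\boldeps||\boldc|}\vec\boldc(\boldeps)$. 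Using the unit axiom and a short sign count (with $|\boldeta|=-|\boldmu|$, $|\boldeps|=-|\boldlambda|$, $|\boldc|=|\boldlambda|-|\boldmu|$), the first identity becomes equivalent to
$$
\vec\boldc(\boldeps)=(-1)^{|\boldlambda|+|\boldmu|}\boldeta ,
$$
and, using the second form of the {\sc (unital coFrobenius)} relation together with {\sc (symmetry)} $\tau\boldc=(-1)^{|\boldlambda|}\boldc$, the second identity reduces to the same equation.

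Everything therefore hinges on $\vec\boldc(\boldeps)=(-1)^{|\boldlambda|+|\boldmu|}\boldeta$. Applying $\vec\boldc^{-1}$ and unwinding $\boldeps=(-1)^{|\boldmu||\boldlambda|+|\boldlambda|+|\boldmu|}\boldeta^\vee\vec\boldc^{-1}$, this is equivalent to the identity $\boldeta^\vee\vec\boldc^{-1}=(-1)^{|\boldmu||\boldlambda|}\vec\boldc^{-1}(\boldeta)$ in $A^\vee$, i.e. to the assertion that the perfect pairing $\boldp:=\ev(\vec\boldc^{-1}\otimes 1)$ --- for which $\vec\boldp=\vec\boldc^{-1}$ is tautologically the inverse of $\vec\boldc$, so that the relations of Lemma~\ref{lem:cp-perfect} hold automatically --- is \emph{symmetric}, $\boldp\tau=(-1)^{|\boldmu|}\boldp$, evaluated at $(\boldeta,-)$; passing from the symmetry identity to the displayed equation is again a short Koszul-sign computation. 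This symmetry of $\boldp$ is the one input not coming formally from the {\sc (unital coFrobenius)} axioms, and it is the step I expect to be the main obstacle: one must deduce it from the symmetry of the copairing $\boldc$. Since $A$ is free and finite dimensional this is elementary --- under $A^{\vee\vee}\cong A$ the relation $\tau\boldc=(-1)^{|\boldlambda|}\boldc$ says that $\vec\boldc$ is self-dual up to sign, hence so is $\vec\boldp=\vec\boldc^{-1}$, which is exactly $\boldp\tau=(-1)^{|\boldmu|}\boldp$ --- but reconciling the two symmetry conventions (a copairing of degree $|\boldc|$ carrying the sign $(-1)^{|\boldlambda|}$ versus a pairing of degree $-|\boldc|$ carrying the sign $(-1)^{|\boldmu|}$) must be done with care.

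Granting this, $\vec\boldc(\boldeps)=(-1)^{|\boldlambda|+|\boldmu|}\boldeta$, hence both counit identities hold, so $\boldlambda$ is counital with counit $\boldeps$ and Proposition~\ref{prop:coFrob-criterion}(i) finishes (i); part (ii) follows as indicated above, with the sign in $\boldeta=(-1)^{|\boldp|}\vec\boldp^{-1}(\boldeps^\vee)$ falling out of the dualization conventions of \S\ref{sec:conventions} and $\boldmu^{\vee\vee}=\boldmu$. We remark that the argument is the ``reverse'' of the counit-preservation part of the proof of Theorem~\ref{thm:algebraic_Poincare_duality}: there $\vec\boldp$ is shown to send the counit of the twisted dual structure to $\boldeps$, i.e. $\boldeps=(-1)^{|\boldmu||\boldlambda|+|\boldlambda|+|\boldmu|}\boldeta^\vee\vec\boldp$; what is needed here is precisely that establishing this equality uses only the unital coFrobenius structure on $A$ together with the bijectivity of $\vec\boldc$ --- equivalently, that $\vec\boldc^{-1}$ intertwines the coproduct $\boldlambda$ with the (sign-twisted) dual coproduct $\boldmu^\vee$, after which Lemma~\ref{lem:intertwine-unitality}(ii) applies.
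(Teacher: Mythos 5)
Your proof is correct, but it reaches counitality of $\boldlambda$ by a different route than the paper. Both arguments share the forced skeleton: once $\boldeps$ is shown to be a counit for $\boldlambda$, Proposition~\ref{prop:coFrob-criterion}(i) does the rest. The paper gets counitality structurally: it cites the computation in the proof of Theorem~\ref{thm:algebraic_Poincare_duality} showing that $\vec\boldc$ intertwines the sign-twisted dual coproduct $(-1)^{|\boldmu||\boldlambda|+|\boldlambda|}\tau\boldmu^\vee$ with $\boldlambda$ (a computation using only the unital coFrobenius data), and then transports the counit $(-1)^{|\boldmu||\boldlambda|+|\boldlambda|+|\boldmu|}\boldeta^\vee$ through the bijection $\vec\boldc$ via Lemma~\ref{lem:intertwine-unitality}(ii) --- exactly the alternative you sketch in your closing paragraph. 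You instead verify counitality directly: plugging the two forms of the {\sc (unital coFrobenius)} relation into the counit identities collapses them to $(\boldeps\otimes 1)\boldc=(-1)^{|\boldmu||\boldlambda|+|\boldmu|}\boldeta$ and $(1\otimes\boldeps)\boldc=(-1)^{|\boldmu||\boldlambda|+|\boldmu|+|\boldlambda|}\boldeta$, which {\sc (symmetry)} of $\boldc$ merges into the single condition $\vec\boldc(\boldeps)=(-1)^{|\boldlambda|+|\boldmu|}\boldeta$, and this in turn amounts to graded symmetry of the inverse pairing $\boldp_0:=\ev(\vec\boldc^{-1}\otimes 1)$ evaluated against $\boldeta$. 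The one step you flag as delicate does go through: in a homogeneous basis, $\tau\boldc=(-1)^{|\boldlambda|}\boldc$ forces $\boldp_0\tau=(-1)^{|\boldlambda|+|\boldc|}\boldp_0=(-1)^{|\boldmu|}\boldp_0$ because $|\boldc|\equiv|\boldlambda|+|\boldmu|$, and the Koszul bookkeeping (with $|\boldeps|=-|\boldlambda|$, $|\boldeta|=-|\boldmu|$) then yields exactly $\boldeta^\vee\vec\boldc^{-1}=(-1)^{|\boldmu||\boldlambda|}\vec\boldc^{-1}(\boldeta)$, so there is no gap, only unfinished (but correct) sign verification. What each approach buys: the paper's proof is shorter given that the intertwining computation of Theorem~\ref{thm:algebraic_Poincare_duality} is already in place, but it silently relies on the observation that this computation never uses the counit; your argument is self-contained, makes that independence explicit, and isolates the genuinely nontrivial input --- that the inverse of a graded-symmetric copairing is a graded-symmetric pairing with the matching sign $(-1)^{|\boldmu|}$ versus $(-1)^{|\boldlambda|}$. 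One small caution: your phrase about the Koszul sign $(-1)^{|\boldmu||\boldlambda|}$ holding ``on the components where $\boldeps$ is nonzero'' is loose (signs attach to degrees of elements, not to nonvanishing), though it is correct here because only the components of $\boldc$ with first factor of degree $|\boldlambda|$ survive. Your treatment of (ii) by dualizing (i), or equivalently by repeating the argument with Proposition~\ref{prop:coFrob-criterion}(ii), matches the paper's ``analogous'' claim.
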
 

\begin{proof}
(i) The proof of Theorem~\ref{thm:algebraic_Poincare_duality} shows that $\vec\boldc$ intertwines the coproducts $(-1)^{|\boldmu||\boldlambda|+|\boldlambda|}\tau\boldmu^\vee$ and $\boldlambda$. Since $\boldmu$ has unit $\boldeta$, the coproduct $\boldmu^\vee$ has counit $\boldeta^\vee$ and $(-1)^{|\boldmu||\boldlambda|+|\boldlambda|}\tau\boldmu^\vee$ has counit $(-1)^{|\boldmu||\boldlambda|+|\boldlambda|+|\boldmu|}\boldeta^\vee$, hence $\boldlambda$ has counit $\boldeps=(-1)^{|\boldmu||\boldlambda|+|\boldlambda|+|\boldmu|}\boldeta^\vee\vec\boldc\,^{-1}$. Now part (i) follows from Proposition~\ref{prop:coFrob-criterion}(i). The proof of part (ii) is analogous. 
\end{proof}

\begin{remark}[Dichotomy finite- vs. infinite dimensional] \qquad \label{rmk:dichotomy}

Let $(A,\boldmu,\boldlambda,\boldeta,\boldeps)$ be a biunital coFrobenius bialgebra. In view of Corollary~\ref{cor:perfect} we have an isomorphism $A\simeq A^\vee$ induced by the pairing or the copairing.
This presents us with the following dichotomy
in relation to tensor products:

(i) If $A$ is finite dimensional, then the * and ! tensor products of $A$ are finite dimensional and coincide with the algebraic tensor product. In particular, the canonical map $A\otimes^* A\to A\otimes^! A$ is a topological isomorphism.   

(ii) If $A$ is infinite dimensional, then the canonical map $A\otimes^* A\to A\otimes^! A$ is \emph{never} a topological isomorphism.
Indeed, assume first that $A=\bigoplus_{i\in\Z}A_i$ is nontrivial in infinitely many degrees. Then by duality there exist infinitely many $i\geq 0$ such that $A_i$ and $A_{|\boldc|-i}$ are nontrivial, and the map $A\otimes^* A\to A\otimes^! A$ is not an isomorphism because $(A\otimes^* A)_{|\boldc|}$ is a direct sum and $(A\otimes^! A)_{|\boldc|}$ a direct product. Otherwise, $A$ is infinite dimensional in some degree, so after shifting we may assume without loss of generality that $A$ is concentrated in degree $0$. By~\cite[Proposition~3.15]{CO-Tate} we have $A=D\oplus D^\vee$ for an infinite dimensional discrete vector space $D$. 
Write $(D\oplus D^\vee)\otimes^*(D\oplus D^\vee)=(D\otimes^*D) \oplus (D\otimes^* D^\vee)\oplus (D^\vee\otimes^* D)\oplus (D^\vee\otimes^*D^\vee)$ and observe that, while the canonical maps $D\otimes^*D\to D\otimes^!D$ and $D^\vee\otimes^*D^\vee\to D^\vee\otimes^!D^\vee$ are isomorphisms (Proposition~\ref{prop:tensor_discrete}), the map $D^\vee\otimes^* D\to D^\vee\otimes^! D$ is not an isomorphism since it can be identified with the strict inclusion $D^\vee[X]\to D^\vee[[X]]$, where $X$ is a basis of $D$~\cite[Example~5.15]{CO-Tate}. Here $D^\vee[X]=\oplus_{x\in X} D^\vee$ and $D^\vee[[X]]=\{(a_x)\in\prod_{x\in X} \mbox{ zero convergent}\}$, where $(a_x)$ is called zero convergent if $\{x\, : \, a_x\notin U\}$ is finite for each open linear subspace $U\subset D^\vee$.
\end{remark}

{\bf Equivalence to graded symmetric Frobenius algebras. }
We conclude this section by yet another equivalent description of biunital coFrobenius bialgebras which will be useful in~\S\ref{sec:manifolds}. 

\begin{definition}\label{def:graded-symm-Frob}
A {\em graded symmetric Frobenius algebra}\footnote{
In the finite dimensional case, in view of Remark~\ref{rem:graded-symm-Frob} this corresponds to the notion of a symmetric Frobenius algebra (see e.g.~\cite{Kock}), as well as to the notion of a cyclic differential graded algebra with trivial differential in~\cite{CFL}. If in addition the product is commutative it corresponds to the notion of a Poincar\'e duality algebra (see e.g.~\cite{Lambrechts-Stanley}).
Frobenius algebras go back to Frobenius~\cite{Frobenius} and the terminology was introduced by Brauer-Nesbitt~\cite{Brauer-Nesbitt}. Early references for Frobenius algebras in relation with TQFT are Dijkgraaf~\cite{Dijkgraaf},  Quinn~\cite{Quinn}, Abrams~\cite{Abrams}, Lauda-Pfeiffer~\cite{Lauda-Pfeiffer}.
}
$(A,\boldmu,\boldeta,\boldeps)$ consists of
\begin{itemize}
\item a graded Tate vector space $A$;
\item a degree zero associative product $\boldmu:A\otimes^* A\to A$ with unit $\boldeta$;
\item a linear map $\boldeps:A\to\bk$
\end{itemize}satisfying the following conditions:
\begin{enumerate}
\item the pairing $\boldp=(-1)^{|\boldeps|}\boldeps\boldmu:A\otimes^*A\to\bk$ is perfect;
\item the triple product $\boldp(\boldmu\otimes 1):A\otimes^*A\otimes^*A\to\bk$ is cyclically symmetric.
\end{enumerate}
\end{definition}

\begin{remark}\label{rem:graded-symm-Frob}
(a) The map $\boldeps$ can be recovered from $\boldp$ and $\boldeta$ via $\boldeps=(-1)^{|\boldp|}\boldp(\boldeta\otimes 1)$, so one can equivalently define a graded symmetric Frobenius algebra in terms of the data $(A,\boldmu,\boldeta,\boldp)$. 

(b) By associativity of $\boldmu$, the triple product satisfies
$$
  \boldp(\boldmu\otimes 1)
  = (-1)^{|\boldeps|}\boldeps\boldmu(\boldmu\otimes 1) 
  = (-1)^{|\boldeps|}\boldeps\boldmu(1\otimes\boldmu) 
  = \boldp(1\otimes\boldmu).
$$
(c) The cyclic symmetry (ii) is equivalent to the symmetry $\boldp=\boldp\tau$. Indeed, (ii) implies
\begin{align*}
  \boldp\tau
  &= \boldp\tau(1\otimes\boldmu)(1\otimes\boldeta\otimes 1) 
  = \boldp(\boldmu\otimes 1)\sigma^{-1}(1\otimes\boldeta\otimes 1) \cr
  &= \boldp(\boldmu\otimes 1)(1\otimes\boldeta\otimes 1) 
  = \boldp
\end{align*}
where the first and last equalities follow from unitality of $\boldmu$, the second one from equation~\eqref{eq:tau-sigma-mu}, and the third one from (ii). Conversely, $\boldp=\boldp\tau$ implies 
$$
  \boldp(\boldmu\otimes 1)\sigma^{-1}
  =\boldp\tau(1\otimes\boldmu)
  =\boldp(1\otimes\boldmu)
  =\boldp(\boldmu\otimes 1).
%
$$
where the first equality follows from equation~\eqref{eq:tau-sigma-mu}, the second one from symmetry of $\boldp$, and the third one from (b). 
\end{remark}

\begin{proposition}\label{prop:cyclic-graded-algebra}
The structure of a graded symmetric Frobenius algebra is equivalent to that of a biunital coFrobenius bialgebra. 
\end{proposition}

\begin{proof}
Suppose first that $(A,\boldmu,\boldlambda,\boldeta,\boldeps)$ is a biunital coFrobenius bialgebra. Then $|\boldeps|=-|\boldlambda|$, and $\boldp=(-1)^{|\boldeps|}\boldeps\boldmu=(-1)^{|\boldlambda|}\boldeps\boldmu$ is perfect by Lemma~\ref{lem:cp-perfect}. Cyclic symmetry (ii) follows from Remark~\ref{rem:graded-symm-Frob}(c).

Conversely, suppose that $(A,\boldmu,\boldeta,\boldeps)$ is a graded symmetric Frobenius algebra. Remark~\ref{rem:graded-symm-Frob}(c) yields the symmetry $\boldp=\boldp\tau$. 
Perfectness of $\boldp$ means that the equation $\vec\boldp(a)(b)=\boldp(a,b)$ defines a topological isomorphism $\vec\boldp:A\to A^\vee$ of degree $|\vec\boldp|=|\boldp|=|\boldeps|$. Symmetry of $\boldp$ means that 
$$
  \vec\boldp(a)(b) = (-1)^{|a||b|}\vec\boldp(b)(a). 
$$
We define the coproduct
\begin{equation}\label{eq:lambda-via-mu}
   \boldlambda := (-1)^{|\boldeps|}\bigl(\vec\boldp\boldmu(\vec\boldp\otimes\vec\boldp)^{-1}\tau\bigr)^\vee: A\to A\otimes^!A
\end{equation}
of degree $|\boldlambda|=-|\boldeps|$. The definition is chosen so that $\vec\boldp$ intertwines the products $\boldmu$ and $(-1)^{|\boldlambda|}\boldlambda^\vee\tau$, 
$$
   \vec\boldp\boldmu = (-1)^{|\boldlambda|}\boldlambda^\vee\tau(\vec\boldp\otimes\vec\boldp),
$$
as it should according to Theorem~\ref{thm:algebraic_Poincare_duality}. Tensoring with the identity and applying the evaluation map we obtain
$$
  \ev(\vec\boldp\boldmu\otimes 1)
  = (-1)^{|\boldlambda|}\ev\bigl(\boldlambda^\vee\tau(\vec\boldp\otimes\vec\boldp)\otimes 1\bigr)
  = (\boldp\otimes \boldp)(1\otimes\boldlambda\otimes 1)\sigma,
$$
  where the last equality follows by an explicit computation evaluating both sides on a tensor $a\otimes b\otimes c$. By definition of $\vec\boldp$ and associativity, the left hand side equals $\ev(\vec\boldp\boldmu\otimes 1) = \boldp(\boldmu\otimes 1) = \boldp(1\otimes\boldmu)$. Since this is cyclically symmetric, we can drop $\sigma$ on the right hand side and get
$$
  \boldp(1\otimes\boldmu) = (\boldp\otimes \boldp)(1\otimes\boldlambda\otimes 1).
$$
This is equivalent (by tensoring with the identity and applying the pairing) to the biunital coFrobenius relation $\boldmu = (1\otimes \boldp)(\boldlambda\otimes 1)$. 
Applying $(-1)^{|\boldeps|}\boldeps$ to both sides gives 
$$
  \boldp
  = (-1)^{|\boldeps|}\boldeps\boldmu
  = (-1)^{|\boldeps|}(\boldeps\otimes \boldp)(\boldlambda\otimes 1)
  = \boldp\bigl((\boldeps\otimes 1)\boldlambda\otimes 1\bigr),
$$
hence the counitality relation $(\boldeps\otimes 1)\boldlambda=1$. 
The other relations in Definition~\ref{defi:biunital-coFrobenius-bialgebra} follow from these ones by symmetry and duality.
\end{proof}

\begin{remark}
In the preceding proof, the isomorphism $\vec\boldp:A\to A^\vee$ allows us to transfer an operation on $A$ to and operation on $A^\vee$ and vice versa. Let us decorate the transfered operation by $\;\tilde{}\;$. Thus the product $\boldmu$ transfers to a product $\wt\boldmu=\vec\boldp\boldmu(\vec\boldp\otimes\vec\boldp)^{-1}$ on $A^\vee$, which in turn dualizes to a coproduct $(\wt\boldmu)^\vee$ on $A$. On the other hand, we can first dualize $\boldmu$ to a coproduct $\boldmu^\vee$ on $A^\vee$ and then transfer it to a coproduct $\wt{\boldmu^\vee}$ on $A$. One easily checks that
$$
  (\wt\boldmu)^\vee=\wt{\boldmu^\vee}.
$$
In this notation, the coproduct $\boldlambda$ takes the equivalent forms
$$
  (-1)^{|\boldeps|}\boldlambda = (\wt\boldmu\tau)^\vee = \tau(\wt\boldmu)^\vee = \tau\wt{\boldmu^\vee}.
$$
\end{remark}

Formula~\eqref{eq:lambda-via-mu} for $\boldlambda$ in terms of $\boldmu$ implies

\begin{corollary}
For a unital coFrobenius bialgebra, commutativity of $\boldmu$ is equivalent to cocommutativity of $\boldlambda$. \qed
\end{corollary}

\section{Graded 2D open-closed TQFT}\label{sec:TQFT}

In this section we define the notion of a 2D open-closed TQFT in graded Tate vector spaces.

We begin by recalling the description of an ungraded 2D open-closed TQFT in terms of generators and relations from Lauda and Pfeiffer~\cite{Lauda-Pfeiffer}. It associates to the circle a finite dimensional $\bk$-vector space $C$ and to the closed interval a finite dimensional $\bk$-vector space $A$. Its generators are the following operations:
\begin{itemize}
\item the product $ \boldmu_C:C\otimes C\to C$ and the unit $\boldeta_C:\bk\to C$;
\item the coproduct $\boldlambda_C:C\to C\otimes C$ and the counit $\boldeps_C:C\to \bk$;
\item the product $ \boldmu_A:A\otimes A\to A$ and the unit $\boldeta_A:\bk\to A$;
\item the coproduct $\boldlambda_A:A\to A\otimes A$ and the counit $\boldeps_A:A\to \bk$;
\item the {\em zipper} (or closed-open map) $\boldzeta:C\to A$;
\item the {\em cozipper} (or open-closed map) $\boldzeta^*:A\to C$.
\end{itemize}
See Figure~\ref{fig:zipper-cozipper} for the zipper and cozipper, and Figure~\ref{fig:TQFT-operations} for the other operations. Inputs are denoted by $+$ (positive punctures), outputs are denoted by $-$ (negative punctures).
\begin{figure} [ht]
\centering
\input{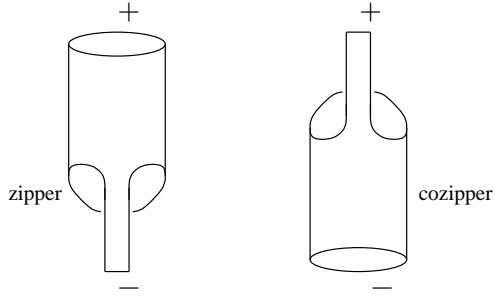}
\caption{The zipper and the cozipper.}
\label{fig:zipper-cozipper}
\end{figure}
\begin{figure} [ht]
\centering
\input{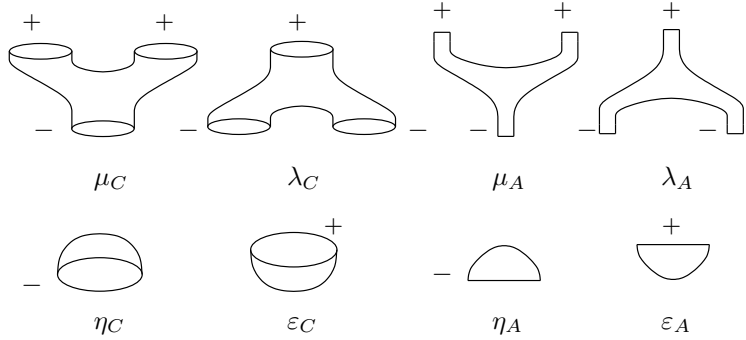}
\caption{The operations constituting a TQFT.}
\label{fig:TQFT-operations}
\end{figure}

These satisfy the following relations (in our terminology):
\renewcommand{\theenumi}{\arabic{enumi}}
\begin{enumerate}
\item $(C, \boldmu_C,\boldeta_C,\boldlambda_C,\boldeps_C)$ is a commutative and cocommutative biunital coFrobenius bialgebra, called \emph{the closed sector}. 
\item $(A, \boldmu_A,\boldeta_A,\boldlambda_A,\boldeps_A)$ is a biunital coFrobenius bialgebra, called \emph{the open sector}. 
\item The zipper is an algebra homomorphism, 
$$
   \boldmu_A(\boldzeta\otimes\boldzeta) = \boldzeta\, \boldmu_C,\qquad \boldzeta\,\boldeta_C=\boldeta_A.   
$$
\item The zipper lands in the center of $ \boldmu_A$, 
$$
   \boldmu_A(\boldzeta\otimes 1) =  \boldmu_A\tau(\boldzeta\otimes 1).  
$$
\item The cozipper is dual to the zipper via the copairings $\boldc_C=\boldlambda_C\boldeta_C$ and $\boldc_A=\boldlambda_A\boldeta_A$, 
$$
  (1\otimes\boldzeta)\boldc_C = (\boldzeta^*\otimes 1)\boldc_A.
$$
\item The {\em Cardy condition}
$$
  \boldzeta\,\boldzeta^* =  \boldmu_A\tau\boldlambda_A. 
$$
\end{enumerate}
See Figure~\ref{fig:Gysin1-new} for relation (3). The middle term in Figure~\ref{fig:Gysin1-new} expresses the TQFT philosophy: the term $\mu_A(\zeta\otimes\zeta)$ on the left equals the term $\zeta\mu_C$ on the right because they both express the operation defined by a 2-disc with two interior positive punctures and one negative boundary puncture. The Cardy condition (6) is depicted in Figure~\ref{fig:Cardy}: the two terms $\zeta\zeta^*$ and $\mu_A\tau\lambda_A$ are equal because they both express the operation defined by an annulus with one positive puncture on one of its boundary components and one negative puncture on the other boundary component. We refer to Lauda-Pfeiffer~\cite{Lauda-Pfeiffer} for a complete set of figures representing all the TQFT relations. In~\cite{Lauda-Pfeiffer} condition (5) is stated as the equivalent condition in Lemma~\ref{lem:cozipper-dual-zipper} below (without signs), and it is proved that a 2D open-closed TQFT, seen as a monoidal functor from the category of 1+1 cobordisms to the category of finite dimensional $\bk$-vector spaces, is equivalent to the algebraic structure defined by these generators and relations. 
\begin{figure} [ht]
\centering
\input{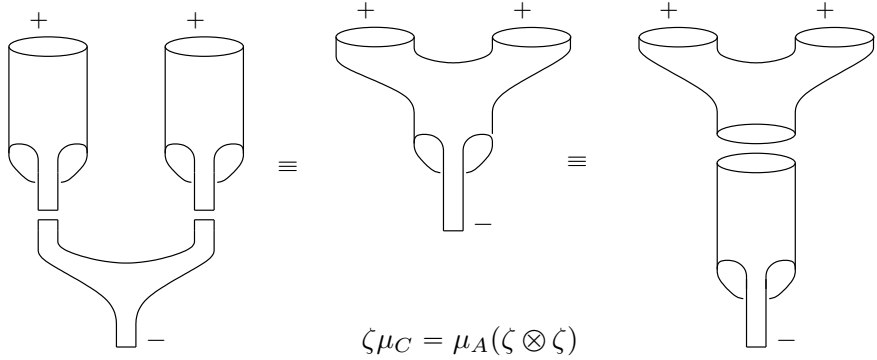}
\caption{The zipper is an algebra homomorphism.}
\label{fig:Gysin1-new}
\end{figure} 
\begin{figure} [ht]
\centering
\input{Cardy.pstex_t}
\caption{The Cardy condition.}
\label{fig:Cardy}
\end{figure}

We now generalize this description of a 2D open-closed TQFT to graded Tate vector spaces. Compared to Lauda-Pfeiffer~\cite{Lauda-Pfeiffer}, this is a two-fold generalization: we use gradings, and we change the category of underlying vector spaces from finite dimensional to Tate. 

After shifting degrees, we can and will assume that the products $ \boldmu_C$ and $ \boldmu_A$ have degree $0$. Then relations (1)--(5) determine all the other degrees in terms of the degrees of $\boldlambda_C$ and $\boldlambda_A$: 
\begin{gather*}
  | \boldmu_C| = |\boldeta_C| = | \boldmu_A| = |\boldeta_A| = |\boldzeta| = 0, \cr
  |\boldlambda_C|=|\boldc_C| = -|\boldeps_C| = -|\boldp_C|, \qquad 
  |\boldlambda_A|=|\boldc_A| = -|\boldeps_A| = -|\boldp_A|, \cr
  |\boldzeta^*| = |\boldlambda_C| - |\boldlambda_A|. 
\end{gather*}
Relation~(6) presents a double constraint: \\
(i) It yields the additional condition $|\boldzeta^*|=|\boldlambda_A|$, which combined with the previous ones implies $|\boldlambda_C|=2|\boldlambda_A|$.
Therefore, relation~(6) makes sense only under this additional condition on the degrees. \\
(ii) The target of $\tau\boldlambda_A$ is $A\otimes^! A$, and the source of $\boldmu_A$ is $A\otimes^* A$. Therefore, the expression $\boldmu_A\tau\boldlambda_A$ in relation~(6) makes sense only if the canonical map $A\otimes^* A\to A\otimes^! A$ is a topological isomorphism. By Remark~\ref{rmk:dichotomy}, this is equivalent to $A$ being finite dimensional.

\begin{definition} \label{defi:gradedTQFT}
A {\em graded 2D open-closed TQFT (in Tate vector spaces)} consists of two graded Tate vector spaces $C,A$ and the following morphisms: 
\begin{itemize}
\item the degree $0$ product $ \boldmu_C:C\otimes^* C\to C$ and the unit $\boldeta_C:\bk\to C$;
\item the coproduct $\boldlambda_C:C\to C\otimes^! C$ and the counit $\boldeps_C:C\to \bk$;
\item the degree $0$ product $ \boldmu_A:A\otimes^* A\to A$ and the unit $\boldeta_A:\bk\to A$;
\item the coproduct $\boldlambda_A:A\to A\otimes^! A$ and the counit $\boldeps_A:A\to \bk$;
\item the {\em zipper} (or closed-open map) $\boldzeta:C\to A$;
\item the {\em cozipper} (or open-closed map) $\boldzeta^*:A\to C$.
\end{itemize} 
These are required to satisfy the following signed versions of relations (1)--(6) above:
\begin{enumerate}
\item $(C, \boldmu_C,\boldeta_C,\boldlambda_C,\boldeps_C)$ is a commutative and cocommutative biunital coFrobenius bialgebra, called \emph{the closed sector}. 
\item $(A, \boldmu_A,\boldeta_A,\boldlambda_A,\boldeps_A)$ is a biunital coFrobenius bialgebra, called \emph{the open sector}. 
\item The zipper is an algebra homomorphism, 
$$
   \boldmu_A(\boldzeta\otimes\boldzeta) = \boldzeta\, \boldmu_C,\qquad \boldzeta\,\boldeta_C=\boldeta_A.   
$$
\item The zipper lands in the center of $ \boldmu_A$, 
$$
  \boldmu_A(\boldzeta\otimes 1) = \boldmu_A\tau(\boldzeta\otimes 1).  
$$
\item The cozipper is dual to the zipper via the copairings $\boldc_C=\boldlambda_C\boldeta_C$ and $\boldc_A=\boldlambda_A\boldeta_A$, 
$$
  (1\otimes\boldzeta)\boldc_C = (\boldzeta^*\otimes 1)\boldc_A.
$$
\item The {\em graded Cardy condition} 
$$
\left\{\begin{array}{ll}
\boldzeta\,\boldzeta^*=(-1)^{|\lambda_A|} \boldmu_A\tau\boldlambda_A & 
\mbox{ if } |\boldlambda_C|=2|\boldlambda_A| \mbox{ and } \dim A<\infty, 
\\
\boldzeta\,\boldzeta^*= \boldmu_A\tau\boldlambda_A=0
& \mbox{ if } |\boldlambda_C|\neq 2|\boldlambda_A| \mbox{ and } \dim A<\infty, 
\\
\boldzeta\,\boldzeta^*=0
& \mbox{ if } |\boldlambda_C|\neq 2|\boldlambda_A|  \mbox{ and } \dim A=\infty. 
\end{array}\right.
$$
\end{enumerate}
\end{definition}

\begin{remark}[on the graded Cardy condition] \label{rmk:graded_Cardy}
Assume first $|\boldlambda_C|=2|\boldlambda_A|$. If $A$ is finite dimensional, then by Corollary~\ref{cor:muAlambdaA} the graded Cardy condition implies
\begin{equation} \label{eq:muAlambdaA}
\boldmu_A\boldlambda_A = \boldmu_A(1\otimes\boldzeta\boldzeta^*\boldeta_A) = (-1)^{|\boldlambda_A|}(1\otimes\boldeps_A\boldzeta\boldzeta^*)\boldlambda_A.
\end{equation}
If $A$ is infinite dimensional we take the graded Cardy condition as the \emph{definition} of the composition $\boldmu_A\tau\boldlambda_A$, and equation~\eqref{eq:muAlambdaA} as the \emph{definition} of the composition $\boldmu_A\boldlambda_A$. 

Assume now $|\boldlambda_C|\neq 2|\boldlambda_A|$. If $A$ is finite dimensional, then by Proposition~\ref{prop:muAlambdaA} the relation $\boldmu_A\tau\boldlambda_A=0$, which is half of the graded Cardy condition, implies $\boldmu_A\boldlambda_A=0$. 
If $A$ is infinite dimensional we \emph{define} 
$$
\boldmu_A\boldlambda_A = \boldmu_A\tau\boldlambda_A=0. 
$$
\end{remark}

\begin{remark}[on involutivity in the closed sector] \label{rmk:involutivity}
Let the closed sector of a graded 2D open-closed TQFT be $(C,\boldmu_C,\boldlambda_C,\boldeta_C,\boldeps_C)$.
Recall from Remark~\ref{rem:involutivity} and Remark~\ref{rmk:dichotomy} that it is involutive iff $\dim C<\infty$ and $\boldmu_C\boldlambda_C=0$.

Assume first that $\boldmu_C$ and $\boldlambda_C$ have different parities. If $C$ is finite dimensional and $2\neq 0$ in $\bk$, then it is involutive (Remark~\ref{rem:involutivity}). 
If $C$ is infinite dimensional we \emph{define} the composition $\boldmu_C\boldlambda_C$ to be $0$. 

Assume now that $\boldmu_C$ and $\boldlambda_C$ have the same parities. If $C$ is finite dimensional then the composition $\boldmu_C\boldlambda_C$ is well-defined but it might be nonzero. If $C$ is infinite dimensional we do not know how to meaningfully define $\boldmu_C\boldlambda_C$. 
\end{remark}
 
{\bf Relation to twisted 2D open-closed TQFT. }
Consider an ungraded 2D open-closed TQFT, viewed as a functor from the category of 1+1 oriented cobordisms to the category of finite dimensional $\bk$-vector spaces. By convention, given an oriented cobordism represented by a surface $S$, we call $S^{\mathrm{in}}$ (resp.~$S^{\mathrm{out}}$) the union of the boundary arcs or circles that correspond to inputs (resp.~outputs),
and $S^{\mathrm{free}}=\p S\setminus (S^{\mathrm{in}}\cup S^{\mathrm{out}})$ the free part of the boundary.
One can obtain another functor to the category of \emph{graded} finite dimensional vector spaces by shifting the degree of each operation associated to a surface $S$ by $c\chi(S,S^{\mathrm{in}})$, where $\chi(S,S^{\mathrm{in}})$ is the Euler characteristic of the pair $(S,S^{\mathrm{in}})$ and $c\in\Z$ is a fixed integer,
and inserting corresponding signs in the composition of operations.
Another admissible shift is given by $d\chi(S,S^{\mathrm{out}})$ for some fixed integer $d\in\Z$, and yet another one by combining these two. 
This {\em twisting} procedure is described by Godin~\cite{Godin} and Wahl-Westerland~\cite[\S6.3]{Wahl-Westerland}, and had also appeared in Costello~\cite{Costello}. 

By combining the twists $(S,S^{\mathrm{in}})$ and $(S,S^{\mathrm{out}})$ in the open sector one can achieve any parity of the open sector product and coproduct. Indeed, for the half pair of pants that defines the open sector coproduct we have $\chi(S,S^{\mathrm{in}})=0$, and for the half pair of pants that defines the open sector product we have $\chi(S,S^{\mathrm{in}})=-1$ (the values are exchanged if one considers instead $\chi(S,S^{\mathrm{out}})$). In contrast, this twisting procedure always preserves the difference of parity between the closed sector product and coproduct: for both pairs of pants that define the closed sector coproduct and product we have $\chi(S,S^{\mathrm{in}})=-1$. 

It was verified by Jonathan Laurent Clivio~\cite{Clivio}
that the signs in Definition~\ref{defi:biunital-coFrobenius-bialgebra} of a biunital coFrobenius bialgebra coincide with the signs obtained by interpreting it as a twisted {\em open} TQFT. 
This discussion implies the following for the signs in Definition~\ref{defi:gradedTQFT}.
\begin{itemize}
\item The signs in the open sector can be obtained by interpreting it as a twisted open TQFT.
\item The signs in the closed sector can be obtained by interpreting it as a twisted {\em open} TQFT, but {\em not} by interpreting it as a closed TQFT if $\boldmu_C$ and $\boldlambda_C$ have opposite parity.
\end{itemize}

As a consequence, although the twisting procedure can be applied simultaneously to the closed- and to the open sector of an open-closed TQFT, it does not allow to cover all possible parities of open/closed product/coproduct starting from an ungraded open-closed TQFT. In particular, this twisting procedure does not cover our main infinite dimensional example given by Rabinowitz Floer homology, where the closed sector product and coproduct have different parities. 
This leads us to formulate the following question:

\begin{question}
How to explain from the twisted TQFT perspective the signs of a graded open-closed 2D TQFT from Definition~\ref{defi:gradedTQFT} in the case where the closed sector product and coproduct have different parities?
\end{question}

\begin{remark}
Consider a graded open-closed 2D TQFT with closed sector $C$ and open sector $A$ over a field $\bk$ of characteristic $\neq 2$. Assume further that $\boldmu_C$ is even and $\boldlambda_C$ is odd, so that, in particular, $|\boldlambda_C|\neq2|\boldlambda_A|$.\footnote{This is the setting of Rabinowitz Floer homology in~\S\ref{sec:RFH}.} Then, following Remarks~\ref{rmk:graded_Cardy} and~\ref{rmk:involutivity}, we have 
$$
\boldmu_C\boldlambda_C=0,\qquad \boldmu_A\boldlambda_A=0,\qquad \boldmu_A\tau\boldlambda_A=0,\qquad \boldzeta\boldzeta^*=0, 
$$
where these identities are postulated in the case where either $C$ or $A$ are infinite dimensional. As a consequence, the TQFT operations vanish in genus $\ge 1$,
and also for genus $0$ surfaces that
have at least two boundary components with nonempty free parts. 
The example of Rabinowitz loop homology of odd-dimensional spheres from~\S\ref{sec:spheres} shows that the operation $\boldzeta^*\boldzeta$, which corresponds to a surface with one free boundary component, can be nonzero. 
\end{remark}
  
{\bf Further properties. }
In the remainder of this section we derive a number of identities that are either conceptually important, or that are used elsewhere.
  
The following lemma restates relation (5) in terms of the pairings. 
It asserts that, up to a sign, $\boldzeta^*$ is the adjoint of $\boldzeta$ with respect to the pairings $\boldp_C$ and $\boldp_A$.

\begin{lemma}\label{lem:cozipper-dual-zipper}
Assuming relations (1) and (2), relation (5) is equivalent to the following relation in terms of the pairings $\boldp_C=(-1)^{|\boldlambda_C|}\boldeps_C\boldmu_C$ and $\boldp_A=(-1)^{|\boldlambda_A|}\boldeps_A\boldmu_A$:
$$
  \boldp_C(1\otimes\boldzeta^*) =  (-1)^{|\boldlambda_A|+|\boldlambda_C|} \boldp_A(\boldzeta\otimes 1). 
$$
\end{lemma}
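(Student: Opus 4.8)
The plan is to deduce relation~(5) from the asserted pairing identity and conversely, by ``contracting'' against the nondegenerate pairings of $C$ and $A$. Recall that after the degree normalization we have $|\boldmu_C|=|\boldmu_A|=0$, so relations~(1)--(2) force $|\boldp_C|=-|\boldlambda_C|$, $|\boldc_C|=|\boldlambda_C|$, $|\boldp_A|=-|\boldlambda_A|$, $|\boldc_A|=|\boldlambda_A|$, and $|\boldzeta^*|=|\boldlambda_C|-|\boldlambda_A|$. By the corollary following Lemma~\ref{lem:cp-perfect} the pairings and copairings of the biunital coFrobenius bialgebras $C$ and $A$ are perfect, and by Proposition~\ref{prop:biunital-coFrob-bialg} (using $|\boldmu|=0$) they satisfy the zig-zag identities $(1\otimes\boldp_C)(\boldc_C\otimes 1)=1$ and $(-1)^{|\boldlambda_C|}(\boldp_C\otimes 1)(1\otimes\boldc_C)=1$, and similarly for $A$.

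First I would introduce the contraction operator $E\colon\Hom(R,C\otimes A)\to\Hom(C\otimes A,R)$, $E[f]=(\boldp_C\otimes\boldp_A)(1_C\otimes f\otimes 1_A)$. Since $\vec\boldp_C$ and $\vec\boldp_A$ are isomorphisms (and the relevant $\iota$ are isomorphisms by finite-dimensionality), $E$ is a bijection. Hence relation~(5), which reads $(1_C\otimes\boldzeta)\boldc_C=(\boldzeta^*\otimes 1_A)\boldc_A$, holds if and only if $E[(1_C\otimes\boldzeta)\boldc_C]=E[(\boldzeta^*\otimes 1_A)\boldc_A]$, and it remains to evaluate both sides.

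For the left-hand side, since $|\boldzeta|=0$ one moves $\boldzeta$ into the second slot of $\boldp_A$ and uses $(\boldp_C\otimes 1_C)(1_C\otimes\boldc_C)=(-1)^{|\boldlambda_C|}1_C$ to collapse $\boldc_C$, which leaves $(-1)^{|\boldlambda_C|}\boldp_A(\boldzeta\otimes 1)$. For the right-hand side one uses instead the sign-free identity $(1_A\otimes\boldp_A)(\boldc_A\otimes 1_A)=1_A$ to collapse $\boldc_A$, which leaves $(-1)^{c}\boldp_C(1\otimes\boldzeta^*)$, where the sign $c$ records the Koszul signs incurred when commuting $\boldzeta^*$ (degree $|\boldlambda_C|-|\boldlambda_A|$) and $\boldc_A$ (degree $|\boldlambda_A|$) past the identity maps and past $\boldp_C$ during the collapse. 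Equating the two expressions and cancelling the common factor yields $\boldp_C(1\otimes\boldzeta^*)=(-1)^{|\boldlambda_A|+|\boldlambda_C|}\boldp_A(\boldzeta\otimes 1)$, and because $E$ is bijective this is an equivalence and not merely an implication.

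I expect the only real obstacle to be the Koszul sign bookkeeping that produces the coefficient $(-1)^{|\boldlambda_A|+|\boldlambda_C|}$; the structural content is simply that contracting against perfect pairings is invertible. As a consistency check one can instead rephrase both relations through the linearizations: unwinding the definitions of $\vec\boldc$ and $\vec\boldp$ (and using that $\iota$ is an isomorphism) shows that relation~(5) is equivalent to $\boldzeta\,\vec\boldc_C=\pm\,\vec\boldc_A\,(\boldzeta^*)^\vee$ and the pairing identity to $\vec\boldp_A\,\boldzeta=\pm\,(\boldzeta^*)^\vee\,\vec\boldp_C$; since $\vec\boldc_C=\vec\boldp_C^{-1}$ and $\vec\boldc_A=\vec\boldp_A^{-1}$ by Lemma~\ref{lem:cp-perfect} and its corollary, these two identities are manifestly equivalent, and matching the signs once more reproduces the coefficient $(-1)^{|\boldlambda_A|+|\boldlambda_C|}$.
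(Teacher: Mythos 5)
Your proposal is correct and follows essentially the same route as the paper: the paper also obtains the pairing identity by tensoring relation~(5) with identities on both sides and applying $\boldp_C\otimes\boldp_A$, and recovers~(5) by applying the tensored pairing identity to $\boldc_C\otimes\boldc_A$ — which is precisely the inverse of your contraction $E$, via the zig-zag identities of Proposition~\ref{prop:biunital-coFrob-bialg}. The only caveat is that you defer the Koszul sign bookkeeping (in fact your intermediate coefficient for $E$ of the left-hand side should be $(-1)^{|\boldlambda_C|+|\boldlambda_A||\boldlambda_C|}$ rather than $(-1)^{|\boldlambda_C|}$), but the omitted factor $(-1)^{|\boldlambda_A||\boldlambda_C|}$ appears on both sides and cancels, so the stated coefficient $(-1)^{|\boldlambda_A|+|\boldlambda_C|}$ does come out.
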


\begin{proof}
Tensoring this relation with the identity from both sides and applying it to $\boldc_C\otimes \boldc_A$ yields the relation~(5). Conversely, tensoring relation~(5) with the identity from both sides and applying to it $\boldp_C\otimes \boldp_A$ yields the relation from the statement. These computations use the fourth group of identities in Proposition~\ref{prop:biunital-coFrob-bialg}. 
\end{proof}

The following lemma shows that the cozipper satisfies a property dual to relation (3). 

\begin{lemma}\label{lem:cozipper-intertwines-coproducts}
Assuming relations (1), (2), (3) and (5), the cozipper $\boldzeta^*:A\to C$ is a coalgebra map, i.e.: 

(i) It intertwines the coproducts in the sense of Definition~\ref{defi:compatibility_with_products}(ii), 
$$
(\boldzeta^*\otimes\boldzeta^*)\boldlambda_A = (-1)^{|\boldzeta^*| |\boldlambda_A|} \boldlambda_C \boldzeta^*.
$$
(ii) It preserves the counits, 
$$
\boldeps_A=\boldeps_C\boldzeta^*.
$$
\end{lemma}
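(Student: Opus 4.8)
The strategy is to dualize relation (3) using the coFrobenius relations and the adjunction between zipper and cozipper provided by Lemma~\ref{lem:cozipper-dual-zipper}. Concretely, I would express the coproducts in terms of the pairings and products via the {\sc (counital coFrobenius)} relation from Proposition~\ref{prop:counital-coFrob-bialg}, namely $\boldlambda_A$ and $\boldlambda_C$ can be recovered from $\boldmu_A,\boldmu_C$ and the perfect pairings $\boldp_A,\boldp_C$. The fact that $\boldzeta$ is an algebra homomorphism (relation (3)) together with the adjunction $\boldp_C(1\otimes\boldzeta^*) = (-1)^{|\boldlambda_A|+|\boldlambda_C|}\boldp_A(\boldzeta\otimes 1)$ from Lemma~\ref{lem:cozipper-dual-zipper} should, after transporting everything through the coFrobenius identifications, yield that $\boldzeta^*$ intertwines the coproducts. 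The sign $(-1)^{|\boldzeta^*||\boldlambda_A|}$ will emerge from carefully tracking the Koszul signs in the definition of $\otimes$ of graded maps and in the shift/dual conventions of \S\ref{sec:conventions}.

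For part (i), the concrete steps are as follows. First, I would write $\boldlambda_C = (1\otimes\boldp_C\otimes 1)(\boldc_C\otimes\boldc_C)\cdot(\text{sign})$ or, more directly, use $\boldlambda_C\boldzeta^* = (\boldmu_C\otimes\boldmu_C)(1\otimes\boldc_C\otimes 1)\boldzeta^*$-type identities — i.e. feed $\boldzeta^*$ into one of the equivalent expressions for $\boldlambda_C$ coming from the biunital coFrobenius structure on $C$. Next, I would use the copairing version of the adjunction, relation (5) itself, $(1\otimes\boldzeta)\boldc_C = (\boldzeta^*\otimes 1)\boldc_A$, to replace a factor of $\boldc_C$ pushed through $\boldzeta^*$ by $\boldc_A$ and $\boldzeta$. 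Then relation (3), $\boldmu_A(\boldzeta\otimes\boldzeta)=\boldzeta\boldmu_C$, lets me collapse the resulting $\boldmu_C$-with-$\boldzeta$'s into $\boldzeta$-with-$\boldmu_A$, and finally reassemble, using the coFrobenius relation for $\boldlambda_A$, the expression $(\boldzeta^*\otimes\boldzeta^*)\boldlambda_A$. The centrality relation (4) may be needed to move a $\boldzeta$ factor past a transposition at one point. I would carry out this computation in a sign-agnostic way first (as in Lauda--Pfeiffer), then do one careful pass over the Koszul signs using the conventions in item 4 and item 5 of \S\ref{sec:conventions} to pin down the exponent $|\boldzeta^*||\boldlambda_A|$.

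For part (ii), the identity $\boldeps_A = \boldeps_C\boldzeta^*$ should follow by pairing relation (5), or equivalently the pairing form in Lemma~\ref{lem:cozipper-dual-zipper}, with the appropriate units. Specifically, from Proposition~\ref{prop:biunital-coFrob-bialg} one has $(-1)^{|\boldlambda_C|}\boldeps_C = \boldp_C(1\otimes\boldeta_C)$ and similarly for $A$; combining this with $\boldp_C(1\otimes\boldzeta^*)=(-1)^{|\boldlambda_A|+|\boldlambda_C|}\boldp_A(\boldzeta\otimes 1)$ and the second half of relation (3), $\boldzeta\boldeta_C=\boldeta_A$, one evaluates on $\boldeta_C$ and gets $\boldeps_C\boldzeta^* = \pm\boldp_A(\boldeta_A\otimes 1) = \pm\boldeps_A$, with all signs canceling because $|\boldzeta^*|=|\boldlambda_C|-|\boldlambda_A|$.

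The main obstacle I expect is \emph{bookkeeping of the Koszul signs} rather than conceptual: the definition of $\varphi\otimes\psi$ on graded maps carries a sign $(-1)^{|\psi||a|}$, the dual map carries $(-1)^{|g||\varphi|}$, and $\boldzeta^*$ has nonzero degree $|\boldlambda_C|-|\boldlambda_A|=|\boldlambda_A|$, so the factor $(-1)^{|\boldzeta^*||\boldlambda_A|}=(-1)^{|\boldlambda_A|^2}=(-1)^{|\boldlambda_A|}$ must be produced cleanly. I would double-check this by specializing to the ungraded case to confirm the statement reduces to the classical coalgebra-map statement of Lauda--Pfeiffer, and then restore grading.
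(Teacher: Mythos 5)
Your plan is essentially the paper's proof: the paper also sandwiches the desired identity between $\boldp_C$ and $\boldc_A$, invokes perfectness (Lemma~\ref{lem:cp-perfect}), and then chains the biunital coFrobenius relations for $C$ and $A$ with relation (3), relation (5), and its pairing reformulation (Lemma~\ref{lem:cozipper-dual-zipper}) to produce $(\boldzeta^*\otimes\boldzeta^*)\boldlambda_A$ with the Koszul sign, while (ii) is likewise obtained from the counit/pairing identities of Proposition~\ref{prop:biunital-coFrob-bialg} (the paper quotes Lemma~\ref{lem:intertwine-unitality}, but notes the direct check you describe). Two small corrections: relation (4) is not needed anywhere in the computation, and your sanity-check equality $|\boldzeta^*|=|\boldlambda_C|-|\boldlambda_A|=|\boldlambda_A|$ holds only under the Cardy degree condition $|\boldlambda_C|=2|\boldlambda_A|$, whereas the lemma and its sign $(-1)^{|\boldzeta^*||\boldlambda_A|}$ are valid for arbitrary $|\boldlambda_C|,|\boldlambda_A|$.
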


\begin{figure}
\begin{center}
\includegraphics[width=.8\textwidth]{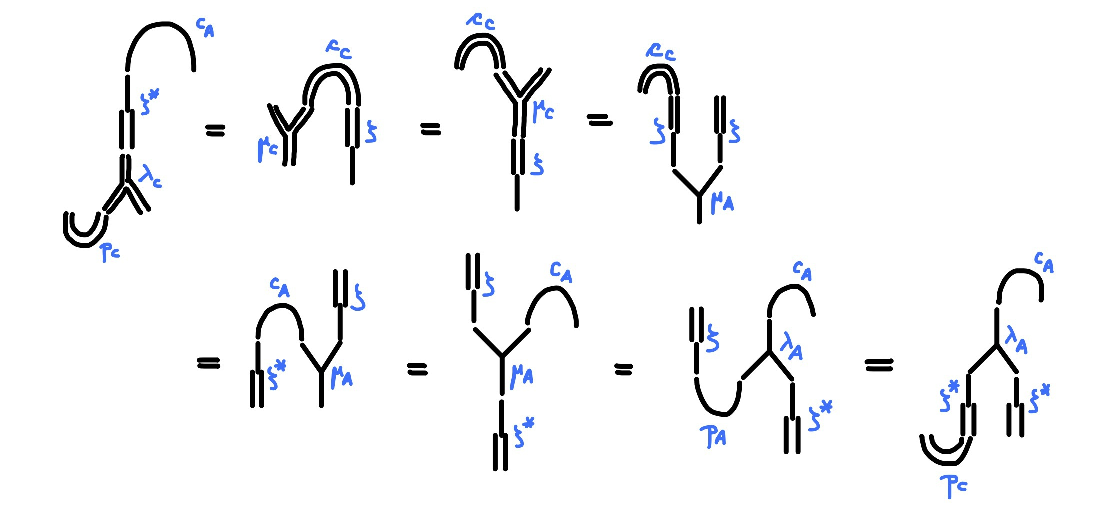}
\caption{Picture proof of Lemma~\ref{lem:cozipper-intertwines-coproducts}(i)}
\label{fig:cozipper-coproducts} 
\end{center}
\end{figure}

\begin{proof}
We give a picture proof of Lemma~\ref{lem:cozipper-intertwines-coproducts}(i) in Figure~\ref{fig:cozipper-coproducts}. In the operations depicted in that Figure  the closed string sector is represented by two lines, and the open string sector is represented by one line. Figure~\ref{fig:cozipper-coproducts} can serve as a red thread for the formal proof given below.  

To prove assertion (i) we show that $(\boldp_C\otimes 1\otimes 1)(1\otimes \boldlambda_C\boldzeta^*\otimes 1)(1\otimes \boldc_A) = (-1)^{|\boldzeta^*||\boldp_C|+|\boldzeta^*|} (\boldp_C\otimes 1\otimes 1)(1\otimes \boldzeta_*\otimes \boldzeta^*\otimes 1)(1\otimes \boldlambda_A\otimes 1)(1\otimes \boldc_A)$. Since $\boldp_C$ and $\boldc_A$ are perfect (Lemma~\ref{lem:cp-perfect}) and $|\boldzeta^*|=|\boldlambda_C|-|\boldlambda_A|$, $|\boldp_C|=-|\boldlambda_C|$, this is equivalent to the fact that $\boldzeta^*$ intertwines the coproducts. We compute
\begin{align*}
( & \boldp_C \otimes 1\otimes 1) (1\otimes \boldlambda_C\boldzeta^*\otimes 1)(1\otimes \boldc_A)\\
& = ((\boldp_C\otimes 1)(1\otimes \boldlambda_C)\otimes 1 )(1\otimes (\boldzeta^*\otimes 1)\boldc_A) \\
& = (-1)^{|\boldlambda_C|}(\boldmu_C\otimes 1)(1\otimes (1\otimes \boldzeta)\boldc_C) \\
& = (-1)^{|\boldlambda_C|} (1\otimes \boldzeta)(\boldmu_C\otimes 1)(1\otimes\boldc_C)\\
& = (-1)^{|\boldlambda_C|} (1\otimes \boldzeta)(1\otimes\boldmu_C)(\boldc_C\otimes 1) \\
& = (-1)^{|\boldlambda_C|} (1\otimes \boldzeta\boldmu_C)(\boldc_C\otimes 1) \\
& = (-1)^{|\boldlambda_C|} (1\otimes \boldmu_A(\boldzeta\otimes\boldzeta))(\boldc_C\otimes 1) \\
& = (-1)^{|\boldlambda_C|} (1\otimes\boldmu_A)((1\otimes \boldzeta)\boldc_C\otimes 1)\boldzeta \\
& = (-1)^{|\boldlambda_C|} (1\otimes\boldmu_A)((\boldzeta^*\otimes 1)\boldc_A\otimes 1)\boldzeta \\
& = (-1)^{|\boldlambda_C|} (\boldzeta^*\otimes 1)(1\otimes \boldmu_A)(\boldc_A\otimes 1)\boldzeta \\
& = (-1)^{|\boldlambda_C|} (\boldzeta^*\otimes 1)(\boldmu_A\otimes 1)(1\otimes \boldc_A)\boldzeta \\
& = (-1)^{|\boldlambda_C|+|\boldlambda_A|} (\boldzeta^*\otimes 1) ((\boldp_A\otimes1)(1\otimes\boldlambda_A)\otimes 1)(1\otimes \boldc_A)\boldzeta \\
& = (-1)^{|\boldlambda_C|+|\boldlambda_A|} (\boldzeta^*\otimes 1)(\boldp_A(\boldzeta\otimes 1)\otimes 1\otimes 1)(1\otimes\boldlambda_A\otimes 1)(1\otimes\boldc_A) \\
& = (\boldzeta^*\otimes 1)(\boldp_C(1\otimes\boldzeta^*)\otimes 1\otimes 1) (1\otimes\boldlambda_A\otimes 1)(1\otimes\boldc_A) \\
& = (-1)^{|\boldzeta^*||\boldp_C|+|\boldzeta^*|}(\boldp_C\otimes 1\otimes 1)(1\otimes\boldzeta^*\otimes\boldzeta^*\otimes1)(1\otimes\boldlambda_A\otimes 1)(1\otimes\boldc_A).
\end{align*}
Here in the 2nd and 4th equalities we use the biunital coFrobenius relations for $C$. In the 6th equality we use that $\boldzeta$ intertwines the products (relation~(3)). In the 8th equality we use relation~(5). In the 10th and 11th equality we use the biunital coFrobenius relation for $A$. In the 13th equality we use the reformulation of relation~(5) in terms of the pairings from Lemma~\ref{lem:cozipper-dual-zipper}. The sign in the last equality arises from the Koszul sign rule.

Assertion (ii) follows from Lemma~\ref{lem:intertwine-unitality}, though of course it can also be checked directly. 
\end{proof}

The previous result motivates our choice of sign $+1$ for the graded version of relation~(5). Our choice of sign for the graded version of the Cardy condition is motivated by Lemma~\ref{lem:normal-tangent} below. 

The next proposition and its corollary motivate the definition of the composition $\boldmu_A\boldlambda_A$ in Remark~\ref{rmk:graded_Cardy}.

\begin{proposition} \label{prop:muAlambdaA} Let $(A, \boldmu_A,\boldeta_A,\boldlambda_A,\boldeps_A)$ be a finite dimensional biunital coFrobenius bialgebra with $|\boldmu_A|=0$.
Then 
$$
\boldmu_A\boldlambda_A = (-1)^{|\boldlambda_A|}\boldmu_A(1\otimes\boldmu_A\tau\boldlambda_A\boldeta_A) = (1\otimes\boldeps_A\boldmu_A\tau\boldlambda_A)\boldlambda_A.
$$
\end{proposition}

\begin{proof}
Using Definition~\ref{defi:coFrobenius-unital-bialgebra}, we compute 
\begin{align*}
(-1)^{|\boldlambda_A|} \boldmu_A(1\otimes\boldmu_A\tau\boldlambda_A\boldeta_A) 
& = (-1)^{|\boldlambda_A|} \boldmu_A (1\otimes\boldmu_A)(1\otimes \tau\boldlambda_A\boldeta_A) \\
& = (-1)^{|\boldlambda_A|} \boldmu_A (1\otimes\boldmu_A) (1\otimes \tau \boldc_A)\\
& = \boldmu_A (\boldmu_A\otimes 1) (1\otimes \boldc_A)\\
& = \boldmu_A\boldlambda_A.
\end{align*}
Here the second equality uses the definition of $\boldc_A$, the third one associativity of $\boldmu_A$ and symmetry of $\boldc_A$, and the last one the biunital coFrobenius relation for $\boldlambda_A$. 
Similarly, Definition~\ref{defi:coFrobenius-counital-bialgebra} yields
\begin{align*}
(1\otimes\boldeps_A\boldmu_A\tau\boldlambda_A)\boldlambda_A
& = (1\otimes\boldeps_A\boldmu_A\tau)(1\otimes\boldlambda_A)\boldlambda_A\\
& = (-1)^{|\boldlambda_A|}(1\otimes\boldp_A\tau)(1\otimes\boldlambda_A)\boldlambda_A\\
& = (1\otimes\boldp_A)(\boldlambda_A\otimes 1)\boldlambda_A \\
& = \boldmu_A\boldlambda_A.
\end{align*}
\end{proof}

\begin{corollary}\label{cor:muAlambdaA} Let $(A, \boldmu_A,\boldeta_A,\boldlambda_A,\boldeps_A)$ be a finite dimensional biunital coFrobenius bialgebra. Assume that $A$ is the open sector of a graded 2D open-closed TQFT such that the graded Cardy relation (6) holds with $|\boldlambda_C|=2|\boldlambda_A|$. Then
$$
\boldmu_A\boldlambda_A = \boldmu_A(1\otimes\boldzeta\boldzeta^*\boldeta_A) = (-1)^{|\boldlambda_A|}(1\otimes\boldeps_A\boldzeta\boldzeta^*)\boldlambda_A.
$$
\qed
\end{corollary}

We conclude this section with an algebraic consequence of Definition~\ref{defi:gradedTQFT} which is needed in~\cite{CHO-PD}.

\begin{lemma}
In a graded 2D open-closed TQFT the following additional relations hold:
\begin{align*}
  &(a)\qquad (\boldzeta\otimes 1)\boldc_C = (-1)^{|\boldlambda_C|+|\boldlambda_A|}(1\otimes\boldzeta^*)\boldc_A, \cr
  &(b)\qquad \boldmu_C(\boldzeta^*\otimes 1) = \boldzeta^*\boldmu_A(1\otimes\boldzeta). 
\end{align*}
\end{lemma}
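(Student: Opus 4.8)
The plan is to derive both identities from relations (1)--(5) alone --- the graded Cardy condition plays no role --- and to postpone all signs to a single bookkeeping step at the end, for which it is convenient to keep the normalization $|\boldmu_C|=|\boldmu_A|=0$ of Definition~\ref{defi:gradedTQFT}, so that $|\boldzeta|=0$, $|\boldp_C|=-|\boldlambda_C|$, $|\boldp_A|=-|\boldlambda_A|$ and $|\boldzeta^*|=|\boldlambda_C|-|\boldlambda_A|$.

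For (a) I would simply apply the twist $\tau$ to relation (5), $(1\otimes\boldzeta)\boldc_C=(\boldzeta^*\otimes 1)\boldc_A$. Since in each tensor product the non-identity factor is either of degree $0$ ($\boldzeta$) or sits opposite an identity ($\boldzeta^*$), one has $\tau(1\otimes\boldzeta)=(\boldzeta\otimes 1)\tau$ and $\tau(\boldzeta^*\otimes 1)=(1\otimes\boldzeta^*)\tau$, so the twisted relation reads $(\boldzeta\otimes 1)\tau\boldc_C=(1\otimes\boldzeta^*)\tau\boldc_A$. Feeding in the {\sc (symmetry)} relations $\tau\boldc_C=(-1)^{|\boldlambda_C|}\boldc_C$ and $\tau\boldc_A=(-1)^{|\boldlambda_A|}\boldc_A$ (built into the definitions of the closed and open sectors) and rearranging yields (a) at once.

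For (b) the plan is to travel from $\boldmu_C(\boldzeta^*\otimes 1)$ to $\boldzeta^*\boldmu_A(1\otimes\boldzeta)$ along a chain of elementary moves: first expand $\boldmu_C=(1\otimes\boldp_C)(\boldlambda_C\otimes 1)$ by the {\sc (counital coFrobenius)} relation of the closed sector, so $\boldmu_C(\boldzeta^*\otimes 1)=(1\otimes\boldp_C)\bigl((\boldlambda_C\boldzeta^*)\otimes 1\bigr)$; then substitute $\boldlambda_C\boldzeta^*=(-1)^{|\boldzeta^*||\boldlambda_A|}(\boldzeta^*\otimes\boldzeta^*)\boldlambda_A$ using Lemma~\ref{lem:cozipper-intertwines-coproducts}(i); commute the left-hand copy of $\boldzeta^*$ past $\boldp_C$ to peel off a factor $\boldp_C(\boldzeta^*\otimes 1)$; rewrite $\boldp_C(\boldzeta^*\otimes 1)=(-1)^{|\boldlambda_A|+|\boldlambda_C|}\boldp_A(1\otimes\boldzeta)$, which follows from Lemma~\ref{lem:cozipper-dual-zipper} combined with the {\sc (symmetry)} of $\boldp_C$ and of $\boldp_A$ (both equal to $+1$ times their twists as $|\boldmu_C|=|\boldmu_A|=0$); and finally reassemble $\boldmu_A=(1\otimes\boldp_A)(\boldlambda_A\otimes 1)$ from the {\sc (counital coFrobenius)} relation of the open sector, using that $\boldzeta$ (being of degree $0$) passes through $\boldlambda_A\otimes 1$ so that $(1\otimes\boldp_A(1\otimes\boldzeta))(\boldlambda_A\otimes 1)=\boldmu_A(1\otimes\boldzeta)$. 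This produces $\boldmu_C(\boldzeta^*\otimes 1)=(-1)^{N}\boldzeta^*\boldmu_A(1\otimes\boldzeta)$ with $N=|\boldzeta^*||\boldlambda_A|+|\boldp_C||\boldzeta^*|+|\boldlambda_A|+|\boldlambda_C|$.

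The step I expect to require the most care is checking $N\equiv 0\pmod 2$, i.e.\ that no sign survives. Substituting $|\boldzeta^*|=|\boldlambda_C|-|\boldlambda_A|$ and $|\boldp_C|=-|\boldlambda_C|$ turns $N$ into $2|\boldlambda_C||\boldlambda_A|-|\boldlambda_A|^2-|\boldlambda_C|^2+|\boldlambda_A|+|\boldlambda_C|$, which is even because $n^2\equiv n\pmod 2$; in particular no case distinction on the degrees is needed, consistently with the fact that (b) does not invoke the Cardy condition. As a consistency check, when $|\boldlambda_C|$ and $|\boldlambda_A|$ are both even, (b) specializes to the classical $2$D-TQFT statement that the cozipper is a homomorphism of modules over the open Frobenius algebra, cf.~\cite{Lauda-Pfeiffer}. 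A picture proof in the spirit of Figure~\ref{fig:cozipper-coproducts} would render the underlying cobordism transparent and can serve as a guide through the sign bookkeeping.
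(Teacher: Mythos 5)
Your proof is correct. For part (a) you do exactly what the paper does: apply the twist to relation (5) and use the symmetry $\tau\boldc_C=(-1)^{|\boldlambda_C|}\boldc_C$, $\tau\boldc_A=(-1)^{|\boldlambda_A|}\boldc_A$ (your stated justification for the commutation rules $\tau(1\otimes\boldzeta)=(\boldzeta\otimes 1)\tau$ and $\tau(\boldzeta^*\otimes 1)=(1\otimes\boldzeta^*)\tau$ is a bit beside the point -- with the Koszul conventions these hold for maps of arbitrary degree -- but the identities are true, so this is harmless).

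For part (b) your route is genuinely different in execution, though it leans on the same key input, Lemma~\ref{lem:cozipper-intertwines-coproducts}(i). The paper tensors both sides of (b) with the identity, applies them to $1\otimes\boldc_C$, rewrites each side using the \textsc{(unital coFrobenius)} relations in $C$ and $A$ together with part (a), and then concludes by perfectness of $\boldc_C$; in other words it tests the identity against the copairing and uses nondegeneracy. You instead work dually and directly: expand $\boldmu_C=(1\otimes\boldp_C)(\boldlambda_C\otimes 1)$ via the \textsc{(counital coFrobenius)} relation, insert Lemma~\ref{lem:cozipper-intertwines-coproducts}(i), peel off $\boldp_C(\boldzeta^*\otimes 1)=(-1)^{|\boldlambda_A|+|\boldlambda_C|}\boldp_A(1\otimes\boldzeta)$ (Lemma~\ref{lem:cozipper-dual-zipper} plus symmetry of the pairings, which is legitimate since $|\boldmu_C|=|\boldmu_A|=0$ in Definition~\ref{defi:gradedTQFT}), and reassemble $\boldmu_A=(1\otimes\boldp_A)(\boldlambda_A\otimes 1)$. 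This avoids both the perfectness argument and the use of (a) inside the proof of (b), at the price of more Koszul-sign bookkeeping; I checked your bookkeeping and it is right: the total exponent $N=|\boldzeta^*||\boldlambda_A|+|\boldp_C||\boldzeta^*|+|\boldlambda_A|+|\boldlambda_C|$ reduces, using $|\boldzeta^*|=|\boldlambda_C|-|\boldlambda_A|$ and $|\boldp_C|=-|\boldlambda_C|$, to $2|\boldlambda_C||\boldlambda_A|-|\boldlambda_A|^2-|\boldlambda_C|^2+|\boldlambda_A|+|\boldlambda_C|\equiv 0 \pmod 2$. You also correctly observe that the Cardy condition is never used, in agreement with the paper.
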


Part (b) says that the cozipper $\boldzeta^*:A\to C$ intertwines the $C$-right module structures, where the right module structure $A$ is induced by the algebra map $\boldzeta:C\to A$. We depict this relation in Figure~\ref{fig:Gysin2-new}, where the middle term represents -- as in Figure~\ref{fig:Gysin1-new} -- the reason why the left and right terms coincide: they both describe the operation determined by an annulus with one interior positive puncture, one positive puncture on one boundary component and one negative puncture on the other boundary component. 

\begin{figure} [ht]
\centering
\input{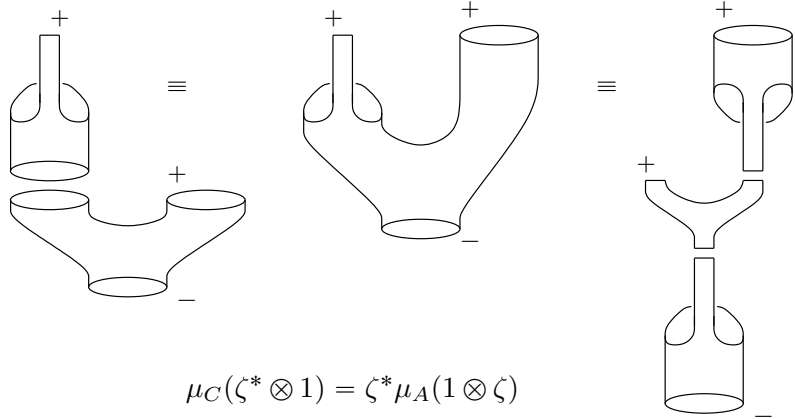}
\caption{The cozipper intertwines the $C$-right module structures.}
\label{fig:Gysin2-new}
\end{figure}

\begin{proof}
  Part (a) follows easily from relation (5) in Definition~\ref{defi:gradedTQFT} and the symmetry of $\boldc_C$ and $\boldc_A$. To prove (b) we tensor both sides with $1$ and apply them to $1\otimes\boldc_C$.
The left hand side gives
\begin{align*}
  \Bigl([\boldmu_C(\boldzeta^*\otimes 1)]\otimes 1\Bigr)(1\otimes\boldc_C) 
  &= (-1)^{|\boldc_C||\boldzeta^*|}(\boldmu_C\otimes 1)(1\otimes\boldc_C)\boldzeta^* \cr
  &= (-1)^{|\boldc_C||\boldzeta^*|}\boldlambda_C\boldzeta^* \cr
  &= (-1)^{|\boldc_C||\boldzeta^*|+|\boldlambda_A||\boldzeta^*|}(\boldzeta^*\otimes\boldzeta^*)\boldlambda_A,
\end{align*}
where the second equality follows from the {\sc (unital coFrobenius)} relation in $C$ and the third one from Lemma~\ref{lem:cozipper-intertwines-coproducts}. The right hand side gives 
\begin{align*}
  &\Bigl([\boldzeta^*\boldmu_A(1\otimes\boldzeta)]\otimes 1\Bigr)(1\otimes\boldc_C) \cr
  &= (\boldzeta^*\boldmu_A\otimes 1)(1\otimes\boldzeta\otimes 1)(1\otimes\boldc_C) \cr
  &= (-1)^{|\boldlambda_C|+|\boldlambda_A|}(\boldzeta^*\boldmu_A\otimes 1)(1\otimes 1\otimes\boldzeta^*)(1\otimes\boldc_A) \cr
  &= (-1)^{|\boldlambda_C|+|\boldlambda_A|}(\boldzeta^*\otimes\boldzeta^*)(\boldmu_A\otimes 1)(1\otimes\boldc_A) \cr
  &= (-1)^{|\boldlambda_C|+|\boldlambda_A|}(\boldzeta^*\otimes\boldzeta^*)\boldlambda_A,
\end{align*}
where the second equality follows from part (a) and the fourth one from the {\sc (unital coFrobenius)} relation in $A$. Because of $|\boldc_C|=|\boldlambda_C|$ and $|\boldzeta^*|=|\boldlambda_C|-|\boldlambda_A|$ the two sides agree, which in view of perfectness of $\boldc_C$ proves (b).
\end{proof}

\section{Homology of manifolds}\label{sec:manifolds}

In this section we show how the algebraic structures introduced in the previous sections occur on the (co)homology of oriented manifolds. Using throughout coefficients in a discrete field $\bk$, we will prove

\begin{proposition}\label{prop:noncompact-manifold}
Let $M$ be a (possibly noncompact) oriented manifold of dimension $n$. Then $H_{n-*}(M)$ is discrete and carries the natural structure of a commutative and cocommutative counital infinitesimal anti-symmetric bialgebra. Its dual is the linearly compact space $H^*(M)$ with its natural structure of a commutative and cocommutative unital infinitesimal anti-symmetric bialgebra. 
\end{proposition}

\begin{remark}
  To view the homology of a noncompact manifold as a discrete space, and its cohomology as a linearly compact space, is a perspective that was pioneered by Lefschetz in his book~\cite{Lefschetz-book}. The previous two structures are related by topological duality. Inserting Poincaré duality into the picture we obtain the commutative diagram below. In this diagram $H^*_c(M)$
denotes \emph{cohomology with compact support}, generated by cocycles with compact support, and $H_*^{BM}(M)$ denotes \emph{Borel-Moore homology}, generated by locally finite cycles. This diagram is implicit in Lefschetz' duality theorem for star- or closure-finite complexes~\cite[III.8.(41.2)]{Lefschetz-book}, where it is established without any reference to algebraic structures.  
$$
\xymatrix{
H_*(M) \ar[d]_{PD}^\simeq \ar@{<->}[rr]^{\mathrm{topological}}_{\mathrm{duality}} & & H^*(M) \ar[d]_{PD}^\simeq \\
H^*_c(M) \ar@{<->}[rr]^{\mathrm{topological}}_{\mathrm{duality}} & & H_*^{BM}(M)
}
$$  
The spaces $H_*(M)$ and $H^*_c(M)$ in the left column are discrete counital infinitesimal anti-symmetric bialgebras, and the spaces $H^*(M)$ and $H_*^{BM}(M)$ in the right column are linearly compact unital infinitesimal anti-symmetric bialgebras (all commutative and cocommutative). Poincaré duality is a topological isomorphism that preserves the algebraic structures, whereas topological duality exchanges counital with unital, and discrete with linearly compact.  
\end{remark}

For closed oriented manifolds, both homology and cohomology are finite dimensional and the following statement may be more familiar. 

\begin{proposition}\label{prop:closed-manifold}
Let $M$ be a closed oriented $n$-dimensional manifold. Then $H_{n-*}(M)$ and $H^*(M)$ carry natural structures of commutative and cocommutative biunital coFrobenius bialgebras.
\end{proposition}

\begin{proposition}\label{prop:closed-manifold-pair}
Let $M$ be a closed oriented $n$-dimensional manifold and $Z\subset M$ a closed oriented $d$-dimensional submanifold. Assume in addition the following conditions: 
\begin{itemize}
\item if $n=2d$ the normal bundle $\nu Z$ and the tangent bundle $TZ$ have the same Euler class; 
\item if $n<2d$ the normal bundle $\nu Z$ and the tangent bundle $TZ$ have vanishing Euler classes; 
\item if $n>2d$ the tangent bundle $TZ$ has vanishing Euler class.
\end{itemize}
Then $C=H^*(M)$ and $A=H^*(Z)$ fit into a graded 2D open-closed TQFT. 
\end{proposition}

\begin{remark}
(a) In the case $n=2d$ the condition on the Euler classes in Proposition~\ref{prop:closed-manifold-pair} is satisfied if $M$ is symplectic and $Z\subset M$ is Lagrangian. Indeed, in that situation the normal bundle $\nu Z$ is isomorphic to the tangent bundle $TZ$. The equality of the Euler classes can be understood as a topological avatar of being a Lagrangian submanifold, and we find it remarkable that it arises from the algebra defining an open-closed TQFT.    

(b) For $n>2d$ the Euler class of the normal bundle $\nu Z$ always vanishes for degree reasons. 
The following example of this case will be relevant later. Let $M=S^*Q$ be the unit cosphere bundle of a closed $m$-dimensional manifold $Q$, and $Z=S^*_qQ$ the unit sphere in the cotangent fiber at a point $q\in Q$. The Euler class of the tangent bundle $TZ$ vanishes with arbitrary coefficients if $m$ is even, and it vanishes with $\Z/2$-coefficients if $m$ is odd.
\end{remark}

\begin{remark} \label{rmk:TQFThomology}
By Poincaré duality $H^{-*}(M)\simeq H_{n+*}(M)$ and $H^{-*}(Z)\simeq H_{d+*}(Z)$, Proposition~\ref{prop:closed-manifold-pair} induces a graded 2D open-closed TQFT structure on shifted homology $(H_{n+*}(M),H_{d+*}(Z))$ with products given by the intersection products on $M$ and $Z$ (see below).
\end{remark}

\subsection*{Bialgebra structures}
In this subsection we will first prove Proposition~\ref{prop:closed-manifold} and then derive from it Proposition~\ref{prop:noncompact-manifold}. 

Let $M$ be a closed oriented $n$-dimensional manifold. The cohomology $H^*(M)$ is finite dimensional and carries the following natural operations:
\begin{itemize}
\item the cup product $\boldmu:H^*(M)\otimes H^*(M)\to H^*(M)$, 
$$
\boldmu(\alpha\otimes\beta)=\alpha\cup\beta,
$$ 
which is associative and commutative of degree $0$ with unit 
$$
\boldeta=1\in H^0(M);
$$
\item the linear map $\boldeps:H^*(M)\to\bk$ of degree $-n$ given by evaluation on the fundamental class $[M]\in H_n(M)$, i.e., 
$$
\boldeps(\alpha)=\la\alpha,[M]\ra.
$$ 
\end{itemize}
The associated pairing $\boldp=(-1)^n\boldeps\boldmu$, 
$$ 
\boldp(\alpha,\beta)=\la[M],\alpha\cup\beta\ra
$$ 
is perfect by Poincar\'e duality, and the triple product $\boldp(\boldmu\otimes 1)(\alpha,\beta,\gamma)=\la[M],\alpha\cup\beta\cup\gamma\ra$ is cyclically (in fact, fully) symmetric. In other words, $H^*(M)$ is a commutative graded symmetric Frobenius algebra in the sense of Definition~\ref{def:graded-symm-Frob}.

\begin{proof}[Proof of Proposition~\ref{prop:closed-manifold}] Since $H^*(M)$ is a commutative graded symmetric Frobenius algebra, Proposition~\ref{prop:cyclic-graded-algebra} implies that the operations $\boldmu,\boldeta,\boldeps$ induce on the finite dimensional space $H^*(M)$ the structure of a commutative and cocommutative biunital coFrobenius bialgebra. 

The statement about $H_{n-*}(M)$ follows by algebraic duality. 
\end{proof}

The proof of Proposition~\ref{prop:noncompact-manifold} will use the {\em doubling construction}: for a compact oriented manifold $M$ with boundary, let $\wh M=M\cup_{\p M}M^{\rm op}$ be the closed oriented manifold obtained by gluing to $M$ another copy of $M$ with opposite orientation along $\p M$. 

\begin{lemma}\label{lem:doubling}
The map $\iota_*:H_*(M)\to H_*(\wh M)$ induced by the inclusion $\iota:M\into\wh M$ is injective. 
\end{lemma}

\begin{proof}
Represent $\alpha\in H_k(M)$ by a smooth singular $k$-cycle $a$ contained in the interior of $M$. Suppose that $\iota_*\alpha=0$, so $a=\p b$ for a singular $(k+1)$-chain $b$ in $\wh M$. We can choose $b$ to be smooth and transverse to $\p M$ (i.e., each simplex in $b$ is transverse to $\p M$). After subdivision, we can then write $b=b_1+b_2$ as the sum of two $(k+1)$-chains with $b_1\subset M$ and $b_2\subset M^{\rm op}$. Let $I:\wh M\to\wh M$ be the involution exchanging $M$ and $M^{\rm op}$. Then $I(b_2)\subset M$ and $\p I(b_2)=-\p b_2$, hence $\wt b=b_1-I(b_2)\subset M$ and $\p\wt b=\p b_1+\p b_2=a$, and therefore $\alpha=0$ in $H_*(M)$.  
\end{proof}

\begin{proof}[Proof of Proposition~\ref{prop:noncompact-manifold}]
Let $M$ be an oriented $n$-dimensional manifold which is not necessarily compact. 
It can be written as a union $M=\cup_{k\in\N}M_k$ of compact submanifolds with boundary with $M_k\subset \Int M_{k+1}$ for all $k$. Since each $H_*(M_k)$ is finite dimensional, $H_*(M)=\colim H_*(M_k)$ is discrete and its dual $H^*(M)=\lim H^*(M_k)$ is linearly compact. 

By the discussion earlier in this section, $H_*(M)$ carries the following natural operations:
\begin{itemize}
\item the intersection product $\boldmu:H_*(M)\otimes H_*(M)\to H_*(M)$, which is associative and commutative of degree $-n$;
\item the homology coproduct $\boldlambda=\Delta_*:H_*(M)\to H_*(M)\otimes H_*(M)$, which is coassociative and cocommutative of degree $0$ with counit $\boldeps:H_0(M)\to\bk$.
\end{itemize}
Here the intersection product can be defined in two equivalent ways: directly in terms of transverse intersections of smooth cycles, or indirectly in terms of the cup product and Poincar\'e duality applied to the $M_k$ in an exhaustion $M=\cup_{k\in\N}M_k$ as above.

We need to show that these operations satisfy the {\sc (counital infinitesimal relation)} from Definition~\ref{defi:secondary-counital}. For this, it suffices to prove that the relation holds when evaluated on each element $a\otimes b\in H_*(M)\otimes H_*(M)$. There exists $k$ such that $a,b$ can be represented by cycles in $M_k$ from the above exhaustion. Consider its double $\wh M_k$ defined above. By Remark~\ref{rmk:TQFThomology}, the operations $\boldmu,\boldlambda,\boldeps$ on $H_{*+n}(\wh M_k)$ are part of a biunital coFrobenius algebra structure and thus satisfy the {\sc (counital infinitesimal relation)}. By Lemma~\ref{lem:doubling}, the relation continues to hold on $H_{*+n}(M_k)$, and therefore on the element $a\otimes b$.
This proves the assertion on $H_{n-*}(M)$, and the one on $H^*(M)$ follows by duality. 
\end{proof}

\subsection*{Graded open-closed 2D TQFT structure} 
In order to prove Proposition~\ref{prop:closed-manifold-pair}, we need to spell out in more detail the biunital coFrobenius bialgebra structure from Proposition~\ref{prop:closed-manifold}. Let $M$ be a closed oriented $n$-dimensional manifold.

We abbreviate $H^i = H^i(M)$ and $H_i = H_ i(M)$. 
We then have $H^i \simeq (H_i)^\vee$ and, since the cohomology groups are finite dimensional, we also have $H_i \simeq (H^i)^\vee$. Given a homology class $A$ and a cohomology class $\alpha$ of the same degree, we define $\langle A,\alpha\rangle = (-1)^{|A| |\alpha|} \langle \alpha,A\rangle$. We shall denote in the sequel by $\ev$ both the Kronecker pairing $H^i \otimes H_i \to \bk$ and its transposition $H_i\otimes H^ i\to \bk$. We denote $[M]\in H_n$ the fundamental class of $M$ and $\int_M\alpha = \langle \alpha,[M]\rangle$ if $\alpha$ has degree $n$. 

We use the following conventions for the cup product, cap product and Poincar\'e duality, mainly following~\cite{Dold}. The starting point is the \emph{homology coproduct}, given by the Alexander-Whitney diagonal $\Delta_*:H_* \to H_* \otimes H_*$~\cite[VI.12]{Dold}. It is a fundamental fact that $\Delta$ is cocommutative. 
\begin{itemize}
\item The \emph{cup product} $\cup:H^i \otimes H^j \to H^{i+j} $ is algebraically dual to $\Delta_*$, 
i.e. $\ev( \cup \otimes 1) = (\ev\otimes \ev) (1\otimes\tau\otimes 1)(1\otimes 1\otimes \Delta_*)$. See~\cite[VII.8]{Dold}.
\item The \emph{cap product} defines a bimodule structure on $H_*$ over $H^*$. The left module structure $\cap_{\mathrm{left}}:H^j\otimes H_i\to H_{i-j}$ is defined in~\cite[VII.12]{Dold} as $\cap_{\mathrm{left}}=(\ev\otimes 1)(1\otimes \tau\Delta_*)$. We define the right module structure $\cap_{\mathrm{right}}:H_i\otimes H^j\to H_{i-j}$ by transposing $\cap_{\mathrm{left}}$, i.e. $A\cap_{\mathrm{right}} \alpha = (-1)^{|A||\alpha|}\alpha\cap_{\mathrm{left}} A$, which makes sense because the cohomology ring is commutative. 

In the sequel we simply write $\cap$ for both $\cap_{\mathrm{right}}$ and $\cap_{\mathrm{left}}$, and the meaning will be clear from the context. If $A$ and $\alpha$ have the same degree then $\alpha\cap A=\langle \alpha,A\rangle[\pt]$ and 
$A\cap \alpha = \langle A,\alpha\rangle [\pt]$, where $[\pt]$ denotes the class of a point. 
\item We define the \emph{Poincar\'e duality isomorphism} $PD:H^i\stackrel\simeq\longrightarrow H_{n-i}$ as 
$$
PD(\alpha) = [M]\cap \alpha. 
$$
\end{itemize} 

We are now ready to describe the coFrobenius structure on $C=H^*$. We denote $C^\vee_i=H_{-i}$. 
\begin{itemize}
\item The product $\boldmu$, the unit $\boldeta$, the counit $\boldeps$ and the pairing $\boldp$ were described in the preamble of the proof of Proposition~\ref{prop:closed-manifold}. The degree $-n$ map $\vec\boldp:C\to C^\vee$ associated to $\boldp$ is   
$$
\vec\boldp=PD.
$$ 
\item \emph{The coproduct $\boldlambda$} is the \emph{Poincar\'e dual} of the homology coproduct $\Delta_*$, i.e. the diagram below is commutative (the degrees of the maps are indicated on the arrows) 
$$
\xymatrix
@C=50pt
{
H^k \ar[r]^-\boldlambda_-{n} \ar[d]_-{\vec\boldp=PD}^-{-n} & \oplus_{i+j=k+n} H^i\otimes H^j \ar[d]^{\vec\boldp\otimes \vec\boldp=PD\otimes PD}_{-2n} \\
H_{n-k}\ar[r]_-{\Delta_*}^-0& \oplus_{i+j=k+n} H_{n-i}\otimes H_{n-j}.
}
$$
Since $\Delta_*$ is cocommutative ($\tau\Delta_*=\Delta_*$) we obtain that $\boldlambda$ is also cocommutative ($\tau\boldlambda=(-1)^{|\boldlambda|}\boldlambda$). For the computation one uses that $\tau(\vec\boldp\otimes \vec\boldp)=(-1)^{|\vec\boldp|} (\vec\boldp\otimes \vec\boldp)\tau$, and $|\vec\boldp|=-|\boldlambda|=-n$. Also, coassociativity of $\Delta_*$ implies coassociativity of $\boldlambda$. The map $\boldeps$ is the counit for $\boldlambda$.  
\item The \emph{copairing $\boldc=\boldlambda\boldeta\in \oplus_{i+j=n} H^i\otimes H^j$} is, by definition of $\boldlambda$, the preimage via $PD\otimes PD$ of the Alexander-Whitney diagonal $\Delta_*([M])\in H_*\otimes H_*$. Viewed differently, it is the image of $1\in H^0$ by the shriek map $H^0\to H^n(M\times M)\cong \oplus_{i+j=n} H^i\otimes H^j$. 
\end{itemize}

Let now $Z\subset M$ be a closed oriented submanifold of dimension $d$. Proposition~\ref{prop:closed-manifold} provides two commutative and cocommutative biunital coFrobenius bialgebras with underlying vector spaces $C=H^*(M)$ and $A=H^*(Z)$, denoted $(C,\boldmu_C,\boldlambda_C,\boldeta_C,\boldeps_C)$ and $(A,\boldmu_A,\boldlambda_A,\boldeta_A,\boldeps_A)$. Here we have $|\boldlambda_C|=n$ and $|\boldlambda_A|=d$. 

Denote $incl:Z\hookrightarrow M$ the inclusion and $\vec\boldp_C=PD_C:C\to C^\vee$, $\vec\boldp_A=PD_A:A\to A^\vee$ the Poincar\'e duality isomorphisms for $M$ and $Z$ respectively. We define the zipper and the cozipper as follows: 
\begin{itemize}
\item The \emph{zipper} $\boldzeta:C\to A$ is defined as the map induced by $incl$ in cohomology, 
$$
\boldzeta=incl^*:H^*(M)\to H^*(Z).
$$
\item The \emph{cozipper} $\boldzeta^*:A\to C$ is defined as the shriek map induced by $incl$ in cohomology, multiplied by an extra sign $(-1)^{n-d}$, 
$$
\boldzeta^*=(-1)^{n-d} incl^!:H^*(Z)\to H^{*+n-d}(M).
$$
By definition, the shriek map $incl^!$ is determined by the identity $PD_C\circ incl^!= incl_*\circ PD_A$, so that $\boldzeta^*$ is such that the diagram below commutes up to a sign $(-1)^{n-d}$, 
$$
\xymatrix
@C=10pt
@R=10pt
{
H^*(Z)\ar[rr]^-{\boldzeta^*}\ar[dd]_-{PD_A} & & H^{*+n-d}(M) \ar[dd]^-{PD_C}\\
& (-1)^{n-d} & \\
H_{d-*}(Z) \ar[rr]_-{incl_*} & & H_{d-*}(M).
}
$$ 
The sign $(-1)^{n-d}$ is motivated by the proof of relation~(5) below. 
\end{itemize}

Proposition~\ref{prop:closed-manifold-pair} is a consequence of the following three lemmas.

\begin{lemma}
The above structure satisfies relations (1)-(5) from Definition~\ref{defi:gradedTQFT} of a graded 2D open-closed TQFT.
\end{lemma}

\begin{proof}
That $(C,\boldmu_C,\boldlambda_C,\boldeta_C,\boldeps_C)$ and $(A,\boldmu_A,\boldlambda_A,\boldeta_A,\boldeps_A)$ are biunital coFrobenius bialgebras has been already noted above. They are both commutative and cocommutative, so relation~(4) is void. That $\boldzeta=incl^*$ is an algebra homomorphism is standard, hence relation~(3) is satisfied. Finally, to check relation~(5) we compute 
$$
\boldp_A(1\otimes\boldzeta) = \ev(\vec\boldp_A\otimes incl^*) = \ev(incl_*\vec\boldp_A\otimes 1),
$$ 
$$
\boldp_C(1\otimes\boldzeta^*)=\boldp_C(\boldzeta^*\otimes 1)=\ev(\vec\boldp_C\boldzeta^*\otimes 1).
$$
Relation~(5) follows from $\vec\boldp_C\boldzeta^*=(-1)^{n-d}incl_*\vec\boldp_A$ and Lemma~\ref{lem:cozipper-dual-zipper}.
\end{proof}

\begin{lemma} \label{lem:normal-tangent}
Assume $n=2d$. Then the Cardy condition~(6) is equivalent to the equality between the Euler class of $TZ$ and the Euler class of the normal bundle of $Z$ in $M$.
\end{lemma}

The proof uses properties of the Thom class, which we single out. We follow the conventions of  Bredon~\cite{Br}.  
\begin{itemize}
\item \emph{Normal bundle}. We denote $\nu Z=TM|_Z/TZ$ the normal bundle to $Z$ in $M$. The orientations of $Z$ and $M$ determine an orientation of $\nu Z$ from the short exact sequence $0\to TZ\to TM|_Z\to \nu Z\to 0$, i.e. a positive basis of $TZ$, followed by a lift to $TM$ of a positive basis in $\nu Z$, is a positive basis of $TM$.  

\item \emph{Thom class} (cf. Bredon~\cite[VI.11]{Br}). Let $\pi:W\to B$ be an oriented $r$-disc bundle and choose a section $incl:B\to W$ which does not intersect the boundary. The Thom class $\tau\in H^r(W,\p W)$ is the unique class which restricts to the positive generator of $H^r(D^r,\p D^r)$ in the fiber. Assuming that $B$ is a closed oriented manifold, we orient $W$ so that the fiber orientation becomes the normal orientation of the section. The Thom class satisfies $\tau\cap [W]=incl_*[B]$, where $[W]$ is the fundamental class rel boundary for $W$. 

\item \emph{Euler class} (cf. Bredon~\cite[VI.12]{Br}). Given an oriented $r$-disc bundle $\pi:W\to B$ with a section $incl:B\to W$, the \emph{Euler class} is the pull-back of the Thom class, $e(W)=incl^*\tau$. For a closed oriented $n$-manifold $M$ we have $e(TM)=\chi(M)\omega$, with $\omega\in H^n(M)$ the orientation class (Bredon~\cite[VI.12.4]{Br}).    

\item \emph{Thom class and shriek map}. For a compact oriented $n$-manifold $M$ (possibly with boundary), let $\widetilde{PD}=\widetilde{PD}_M:H^*(M,\p M)\stackrel\simeq\longrightarrow H_{n-*}(M)$ be the isomorphism given by $\widetilde{PD}(\alpha)=\alpha\cap [M]$ (thus $\widetilde{PD}(\alpha)=(-1)^{n|\alpha|} PD(\alpha)$ with our previous convention for Poincar\'e duality). 

In the setup of an oriented $r$-disc bundle $\pi:W\to B$ over a closed oriented base $B$ with section $incl:B\to W$, we define ${\widetilde{incl}}^!=(\widetilde{PD}_W)^{-1} incl_* \widetilde{PD}_B:H^*(B)\to H^{*+r}(W,\p W)$. Then ${\widetilde{incl}}^!$ is an isomorphism and equals the composition $H^*(B)\stackrel{\pi^*}\longrightarrow H^*(W)\stackrel{\cup\tau}\longrightarrow H^{*+r}(W,\p W)$ (Bredon~\cite[VI.11.3]{Br}). Note that, on $H^i(B)$, we have ${\widetilde{incl}}^!=(-1)^{i \dim B + (i+r) (\dim B + r)} incl^!$ with our previous convention for $incl^!$.   
\end{itemize} 

\begin{proof}[Proof of Lemma~\ref{lem:normal-tangent}]
Since $\boldmu_A$ is commutative, the Cardy condition is equivalent to 
$$
\boldzeta \,\boldzeta^* = (-1)^{|\boldlambda_A|}\boldmu_A \boldlambda_A.
$$
Both these maps act from $H^*(Z)$ to $H^{*+d}(Z)$ and hence vanish in positive degrees. The Cardy condition is therefore equivalent to 
$$
\boldzeta \,\boldzeta^*(1) = (-1)^d\boldmu_A \boldlambda_A(1),
$$ 
with $1\in H^0(Z)$ the unit. 

On the one hand we have 
$$
\boldzeta \,\boldzeta^*(1)=(-1)^{n-d} incl^* \, incl^! (1) = (-1)^{n-d} incl^* \tau = (-1)^d e(\nu Z).
$$
The second equality uses that $incl^!={\widetilde{incl}}^!$ on $H^0$ if $n$ is even and ${\widetilde{incl}}^!(1)=\tau$, the Thom class of the normal bundle $\nu Z$ extended as a cohomology class in $M$. This was explained in the previous paragraph. The third equality uses that $n=2d$ and the definition of $e(\nu Z)$. 

On the other hand we claim that 
$$
\boldmu_A \boldlambda_A(1)= e(TZ).
$$ 
To see this we write $\boldmu_A=\Delta^*\times$, where $\Delta:Z\to Z\times Z$ is the diagonal and $\times:H^i(Z)\otimes H^j(Z)\to H^{i+j}(Z\times Z)$ is the cohomological cross-product (Bredon~\cite[VI.1-4]{Br}). These maps fit into the diagram below, where we denote $H^*=H^*(Z)$ and $H_*=H_*(Z)$ and $\times$ on the lower arrow denotes the homological cross product:
$$
\xymatrix
@C=30pt
{
H^k \ar[r]^-{\boldlambda_A} \ar[d]_{PD_Z} & \displaystyle \bigoplus_{i+j=k+d} H^i\otimes H^j \ar[d]^{PD_Z\otimes PD_Z} \ar[r]^-\times & H^{k+d}(Z\times Z) \ar[r]^-{\Delta^*}\ar[d]^{PD_{Z\times Z}} & H^{k+d} \\
H_{n-k}\ar[r]_-{\Delta_*}& \displaystyle \bigoplus_{i+j=k+d} H_{d-i}\otimes H_{d-j} \ar[r]_-\times & H_{d-k}(Z\times Z) & 
}
$$  
The first square is commutative by definition. To prove that the second square is commutative we use that $[Z]\times[Z]=[Z\times Z]$. On the upper and right sides we read $PD_{Z\times Z}(\alpha\times\beta)= [Z\times Z]\cap (\alpha\times\beta) = ([Z]\times [Z])\cap (\alpha\times\beta) = (-1)^{d|\alpha|}([Z]\cap \alpha)\times ([Z]\cap \beta)$. On the left and lower sides we read $\times((PD\otimes PD)(\alpha\otimes\beta))=(-1)^{d|\alpha|}PD(\alpha)\times PD(\beta)=(-1)^{d|\alpha|}([Z]\cap \alpha)\times ([Z]\cap \beta)$. 
We obtain 
$$
\boldmu_A\boldlambda_A(1) = \Delta^*PD_{Z\times Z}^{-1}[\Delta],
$$
where $[\Delta]\in H_d(Z\times Z)$ is the class of the diagonal. Let $\{\alpha\}$ be a basis for $H^*(Z)$ and $\{\alpha^o\}$ a dual basis, i.e. $\alpha^o\cup \beta=\delta_{\alpha,\beta}\omega$ with $\omega\in H^d(Z)$ the orientation class. Bredon~\cite[VI.12.4]{Br} shows that $\widetilde{PD}_{Z\times Z}^{-1}[\Delta]=\sum_\alpha (-1)^{|\alpha|}\alpha^o\times\alpha$. On the other hand $PD_{Z\times Z}=\widetilde{PD}_{Z\times Z}$ and we obtain $\Delta^*PD_{Z\times Z}^{-1}[\Delta] = \sum_\alpha (-1)^{|\alpha|}\alpha^o\cup \alpha = (\sum_\alpha (-1)^{|\alpha|})\omega = e(TZ)$. 

The Cardy condition is therefore equivalent to $e(\nu Z)=e(TZ)$. 
\end{proof}

\begin{lemma} \label{lem:normal-tangent-new}
(a) Assume $n<2d$. Then the Cardy condition~(6) is equivalent to the vanishing of the Euler classes of the tangent bundle $TZ$ and of the normal bundle $\nu Z$.

(b) Assume $n>2d$. Then the Cardy condition~(6) is equivalent to the vanishing of the Euler class of the tangent bundle $TZ$.
\end{lemma}

\begin{proof}
In both cases (a) and (b) the Cardy condition writes $\boldzeta\boldzeta^*=\boldmu_A\tau\boldlambda_A=0$. In view of the commutativity of $\boldmu_A$, this is equivalent to 
$$
\boldzeta\boldzeta^*=\boldmu_A\boldlambda_A=0.
$$

The condition $\boldmu_A\boldlambda_A=0$ is equivalent in both cases (a) and (b) to the vanishing of the Euler class of $TZ$. Indeed, we have seen in the proof of Lemma~\ref{lem:normal-tangent} that the only potentially nonvanishing component of $\boldmu_A\boldlambda_A:H^*(Z)\to H^{*+d}(Z)$ is $\boldmu_A\boldlambda_A(1)=e(TZ)$. 

The condition $\boldzeta\boldzeta^*=0$ is trivially satisfied in case (b) for dimensional reasons: the map acts as $\boldzeta\boldzeta^*:H^*(Z)\to H^{*+n-d}(Z)$, and the cohomology $H^*(Z)$ is supported in degrees $0,\dots,d$. 

The condition $\boldzeta\boldzeta^*=0$ is equivalent in case (a) to the vanishing of the Euler class of $\nu Z$. Indeed, by definition $(-1)^{n-d}\boldzeta\boldzeta^*=incl^*incl^!$ and we have seen in the proof of Lemma~\ref{lem:normal-tangent} that $incl^*incl^!$ acts on $H^i(Z)$ as $incl^*incl^!(\alpha)=\eps(i)\alpha\cup e(\nu Z)$, where $\eps(i)=(-1)^{id+(i+n-d)n}$. This map vanishes identically if and only if it vanishes on $1\in H^0(Z)$, where it is equal to $e(\nu Z)$. 
\end{proof}

\section{Quantum homology}\label{sec:quantum}

This section is expository. Our purpose is to illustrate the graded 2D open-closed TQFT structure arising from quantum homology for pairs $(M,L)$ consisting of a closed symplectic manifold $M$ and a closed Lagrangian submanifold $L\subset M$. 
The algebras encountered in this section will be of finite type over the ground field. 
We follow closely the exposition of Biran-Cornea~\cite{Biran-Cornea-rigidity-uniruling,Biran-Cornea-Lagrangian-topology} and McDuff-Salamon~\cite[Chapter~11]{MS04}. 

Let $(M^{2n},\omega)$ be a closed symplectic manifold and $L^n\subset M^{2n}$ a closed Lagrangian submanifold. We assume the following:
\begin{itemize}
\item $L$ is \emph{monotone}, i.e., there exists $\tau>0$ such that, denoting $\omega:\pi_2(M,L)\to \R$ the symplectic area morphism and $\mu:\pi_2(M,L)\to\Z$ the Maslov index morphism, we have $\omega=\tau \mu$. 

\item The \emph{minimal Maslov number}
$$
N_L=\min \, \{\mu(\alpha)\, : \, \alpha\in\pi_2(M,L),\quad \mu(\alpha)>0\}
$$
satisfies $N_L\ge 2$. 

\item $L$ is orientable ($w_1(TL)=0$) and spin ($w_2(TL)=0$), with a \emph{fixed orientation and spin structure}. 
\end{itemize}

Since $\mu$ changes by $2c_1(TM)$ upon modifying a relative homotopy class by an element of $\pi_2(M)$, the manifold $M$ is necessarily monotone with monotonicity constant $2\tau$, i.e., $[\omega]|_{\pi_2(M)}=2\tau c_1(TM)|_{\pi_2(M)}$. Define the \emph{minimal Chern number} by
$$
C_M=\min \, \{c_1(\alpha)\, : \, \alpha\in\pi_2(M),\quad c_1(\alpha)>0\}.
$$
Then the minimal Maslov number $N_L$ divides $2C_M$.

\begin{remark} The assumptions that $L$ is monotone and $N_L\ge 2$ ensure that quantum homology of $M$, denoted $QH_*(M)$, and quantum homology of $L$, denoted $QH_*(L)$, are defined, the first over the ground ring $\Z$ and the second over the ground ring $\Z/2$. The assumption that $L$ is oriented and spin ensures that $QH_*(L)$ is defined over $\Z$. These assumptions are not the minimal ones that guarantee these properties~\cite{FOOO1,FOOO2}, but they are technically the easiest to work with. \end{remark}

In the sequel we use coefficients in a ground \emph{field} $\bk$. Lagrangian quantum homology $QH_*(L)$ is canonically a module over the graded ring $\Lambda=\bk[t,t^{-1}]$ with $|t|=-N_L$, whereas $QH_*(M)$ is canonically a module over the graded ring $\Gamma=\bk[s,s^{-1}]$ with $|s|=-2C_M$. Using the embedding of rings $\Gamma\hookrightarrow \Lambda$, $s\mapsto t^{2C_M/N_L}$ we view $\Lambda$ as a $\Gamma$-module and subsequently define $QH_*(M;\Lambda)=QH_*(M)\otimes_\Gamma\Lambda$. We further shift gradings to define 
$$
Q\H_*(M;\Lambda)=QH_{*+2n}(M;\Lambda),\qquad Q\H_*(L)=QH_{*+n}(L).
$$

\begin{theorem}[Biran-Cornea~\cite{Biran-Cornea-rigidity-uniruling}]
The quantum homologies 
$$
C=Q\H_*(M;\Lambda) \qquad \mbox{and} \qquad A=Q\H_*(L)
$$ 
fit into a graded 2D open-closed TQFT. \qed
\end{theorem}

This theorem is not stated as such by Biran-Cornea, but it essentially follows from their work as we now explain. 

The biunital coFrobenius bialgebra structure on $C$, also called Frobenius algebra structure in the literature, is determined by the product $\boldmu_C$ and the nondegenerate pairing $\boldp_C$. To describe them, note that additively we have 
$$
Q\H_*(M;\Lambda)=\H_*(M)\otimes_\bk \Lambda.
$$
The product $\boldmu_C$ is the \emph{quantum intersection product}, which extends the intersection product on $\H_*(M)$. The canonical $\bk$-linear map $\Lambda\mapsto \bk$ which associates to a Laurent polynomial in $t$ the coefficient of $1=t^0$ gives rise to a canonical nondegenerate pairing 
$$
\boldp_C:Q\H_*(M;\Lambda)\otimes_\bk Q\H_*(M;\Lambda)\to \bk
$$
that restricts to the canonical pairing on $\H_*(M)$. The key compatibility relation $\boldp_C(\boldmu_C\otimes 1)=\boldp_C(1\otimes\boldmu_C)$ is proved in~\cite[Proposition~11.1.9]{MS04}. 

Unlike the quantum homology of the ambient manifold, the quantum homology of a Lagrangian submanifold $L$ cannot be directly expressed in terms of the homology of $L$: there is a spectral sequence that starts at $\H_*(L)\otimes_\bk \Lambda$ and converges to $Q\H_*(L)$ (Oh~\cite{Oh96}, see also~\cite[Theorem~A(iv)]{Biran-Cornea-rigidity-uniruling}). This spectral sequence arises from the ``pearl complex'' of Biran and Cornea~\cite{Biran-Cornea-rigidity-uniruling}, which describes the differential for quantum homology as a perturbation of the Morse differential whose higher order terms involve pseudoholomorphic discs with boundary on $L$. 

The Frobenius algebra structure on $A$ is determined by the product $\boldmu_A$ and the nondegenerate pairing $\boldp_A$. The product $\boldmu_A$ is the \emph{quantum intersection product on $L$}, see~\cite[Theorem~A(ii)]{Biran-Cornea-rigidity-uniruling}. It is associative, but not necessarily commutative. The nondegenerate pairing 
$$
\boldp_A:Q\H_*(L)\otimes_\bk Q\H_*(L)\to \bk
$$
arises by applying to the quantum product a canonical augmentation, which is described at the level of Morse chains as follows: it associates to each minimum of the Morse function that underlies the pearl complex the coefficient of $1=t^0$ in its quantum coefficient, and it vanishes on critical points of positive Morse index. That this pairing is nondegenerate is proved in~\cite[Proposition~4.4.1]{Biran-Cornea-rigidity-uniruling}, see also~\cite[Remark~4.4.7.b]{Biran-Cornea-rigidity-uniruling}.

In the terminology of~\cite[Theorem~A(ii)]{Biran-Cornea-rigidity-uniruling}, the zipper $\boldzeta:Q\H_*(M;\Lambda)\to Q\H_*(L)$ is defined from the two-sided algebra structure of $Q\H_*(L)$ over $Q\H_*(M;\Lambda)$ by applying an element of $Q\H_*(M;\Lambda)$ to the unit of $Q\H_*(L)$. That the zipper is an algebra homomorphism (Definition~\ref{defi:gradedTQFT}(3)) and that its image lands in the center of $Q\H_*(L)$ (Definition~\ref{defi:gradedTQFT}(4)) are then consequences of this two-sided algebra structure. 

In the terminology of~\cite[Theorem~A(iv)]{Biran-Cornea-rigidity-uniruling}, the cozipper $\boldzeta^*:Q\H_*(L)\to Q\H_{*-n}(M;\Lambda)$ is called \emph{quantum inclusion}.  That the cozipper is dual to the zipper in the sense of Definition~\ref{defi:gradedTQFT}(5) follows from~\cite[Theorem~A(iii)]{Biran-Cornea-rigidity-uniruling}. 

The above results are proved by Biran and Cornea in~\cite{Biran-Cornea-rigidity-uniruling} with $\Z/2$-coefficients, and in~\cite[Appendix~A]{Biran-Cornea-Lagrangian-topology} with arbitrary coefficients. A related study of TQFT relations with arbitrary coefficients in the context of symplectic homology was carried out by Ritter~\cite{Ritter}. 

The only axiom of a graded 2D open-closed TQFT that is not explicitly proved in the work of Biran-Cornea is the Cardy condition from Definition~\ref{defi:gradedTQFT}(6). The proof involves an interpolation between modulus zero and modulus infinity in the moduli space of annuli with one puncture on each of the two boundary components. We omit the details and refer to Abouzaid~\cite{Abouzaid2010a} for a proof of the Cardy condition in the context of symplectic homology.

\section{Symplectic homology and Rabinowitz Floer homology}\label{sec:RFH}

Our main source of examples for the algebraic structures in this paper are symplectic homology and Rabinowitz Floer homology, which we describe in this section following~\cite{CHO-PD,CHO-reducedSH}. 

Consider a Liouville domain $V$ of dimension $2n$ and an exact oriented compact Lagrangian $L\subset V$ with Legendrian boundary $\p L\subset\p V$ such that $2c_1(V,L)=0$. The associated Floer homology and cohomology groups are taken with coefficients in a fixed discrete field $\bk$ and graded by Conley--Zehnder indices. The degree shifted symplectic homology $S\H_*(V)=SH_{*+n}(V)$, symplectic cohomology $S\H^*(V)=SH^{*+n}(V)$, and Rabinowitz Floer homology $S\H_*(\p V)=SH_{*+n}(\p V)$ fit into a long exact sequence
$$
  \cdots S\H^{-2n-*}(V) \stackrel{\eps}\longrightarrow S\H_*(V) \stackrel{\iota}\longrightarrow S\H_*(\p V) \stackrel{\pi}\longrightarrow S\H^{1-2n-*}(V)\cdots
$$
It follows that reduced symplectic homology $\ol{S\H}_*(V) = \coker\,\eps$ and cohomology $\ol{S\H}^*(V) = \ker\eps$ fit into the short exact sequence
$$
  0 \longrightarrow \ol{S\H}_*(V) \stackrel{\iota}\longrightarrow S\H_*(\p V) \stackrel{\pi}\longrightarrow \ol{S\H}^{1-2n-*}(V) \longrightarrow 0.
$$
Similarly, the degree shifted symplectic homology $S\H_*(L)=SH_{*+n}(L)$, symplectic cohomology $S\H^*(L)=SH^{*+n}(L)$\footnote{
$S\H_*(L)$ and $S\H^*(L)$ are also known as wrapped Floer (co)homology.},
and Rabinowitz Floer homology $S\H_*(\p L)=SH_{*+n}(\p L)$ fit into a long exact sequence
$$
  \cdots S\H^{-n-*}(L) \stackrel{\eps}\longrightarrow S\H_*(L) \stackrel{\iota}\longrightarrow S\H_*(\p L) \stackrel{\pi}\longrightarrow S\H^{1-n-*}(L)\cdots
$$
It follows that reduced symplectic homology $\ol{S\H}_*(L) = \coker\,\eps$ and cohomology $\ol{S\H}^*(L) = \ker\eps$ fit into the short exact sequence
$$
  0 \stackrel{\eps}\longrightarrow \ol{S\H}_*(L) \stackrel{\iota}\longrightarrow S\H_*(\p L) \stackrel{\pi}\longrightarrow \ol{S\H}^{1-n-*}(L) \longrightarrow 0.
$$
The symplectic homologies $S\H_*(V)$ and $S\H_*(L)$ are colimits of finite dimensional vector spaces and thus discrete, so the symplectic cohomologies $S\H^*(V)$ and $S\H^*(L)$ are linearly compact. 
It is proved in~\cite{CO-Tate} that the Rabinowitz Floer homologies $S\H_*(\p V)$ and $S\H_*(\p L)$ are Tate vector spaces.

\begin{theorem}[{\cite{CHO-PD}}] \label{thm:open-closed-RFH}
The Rabinowitz Floer homologies $C=S\H_*(\p V)$ and $A=S\H_*(\p L)$ fit into a graded 2D open-closed TQFT. \qed
\end{theorem}

In order to get a good algebraic structure on reduced symplectic homology, we introduced in~\cite{CHO-reducedSH} the notions of a strongly essential Weinstein domain $V$ and a strongly essential Lagrangian $L\subset V$. Rather than repeating the definition here, let us just mention that it includes the disk cotangent bundles and disk cotangent fibres discussed below. 

\begin{theorem}[\cite{CHO-reducedSH}] \label{thm:ccuias}
(a) Let $V$ be a strongly essential Weinstein domain of dimension $2n\geq 6$. 
Then $\ol{S\H}_*(V)$ is a commutative cocommutative unital infinitesimal anti-symmetric bialgebra. 

(b) Let $L$ be a strongly essential Lagrangian of dimension $n\ge 2$. Assume in addition the following: $L$ is diffeomorphic to a disc if $n=2$, and $L$ has no homology in dimension $2$ if $n=4$. 
Then $\ol{S\H}_*(L)$ is a unital infinitesimal anti-symmetric bialgebra. \qed
\end{theorem}

By duality, it follows that under the hypotheses of the theorem $\ol{S\H}^*(V)$ and $\ol{S\H}^*(L)$ are counital infinitesimal anti-symmetric bialgebras. 

{\bf Loop homology. }
Consider now the particular case $V=D^*Q$ and $L=D^*_qQ$ where $Q$ is a closed  oriented manifold, $D^*Q$ its disc cotangent bundle, and $D^*_qQ\subset D^*Q$ its disc cotangent fibre at a fixed point $q\in Q$. Denote by $\Lambda Q$ the free loop space, and by $\Om Q$ the based loop space at $q$. The we have isomorphisms~\cite{CHO-MorseFloerGH}\footnote{On the loop space side we need to twist by a suitable local system if $Q$ is not spin, see~\cite{CHO-MorseFloerGH}.}
\begin{gather*}
  S\H_*(\p D^*Q)=\wh \H_*(\Lambda Q), \qquad S\H_*(\p D^*_qQ)=\wh H_*(\Omega Q), \cr
  \ol{S\H}_*(D^*Q)=\ol{\H}_*(\Lambda Q), \qquad S\H_*(D^*_qQ)=H_*(\Omega Q)
\end{gather*}
where $\wh\H_*(\Lambda Q)=\wh H_{*+n}(\Lambda Q)$ is degree shifted Rabinowitz loop homology, $\wh H_*(\Omega Q)$ is based Rabinowitz loop homology, and $\ol{\H}_*(\Lambda Q)=H_{*+n}(\Lambda Q)/\chi(Q)[{\rm pt}]$ is degree shifted loop homology modulo Euler characteristic times the point class. Therefore, the preceding theorems have the following corollary.

\begin{corollary}\label{cor:loop}
Let $Q$ be a closed oriented $n$-dimensional manifold. 

(a) Reduced loop homology $\ol{\H}_*(\Lambda Q)$ is a commutative cocommutative unital infinitesimal anti-symmetric bialgebra if $n\ge 3$. 

(b) Based loop homology $H_*(\Om Q)$ is a unital infinitesimal anti-symmetric bialgebra. 

(c) The Rabinowitz loop homologies $\wh\H_*(\Lambda Q)$ and $\wh H_*(\Omega Q)$ fit into a graded 2D open-closed TQFT such that $\boldzeta\boldzeta^*=0$. 
\qed 
\end{corollary}

We refer to~\S\ref{sec:spheres} for explicit formulas of the relevant operations in the case of odd-dimensional spheres. Corollary~\ref{cor:loop}(b) does not follow in dimension $n=1$ from Theorem~\ref{thm:ccuias}(b), but rather from those explicit computations in the case $Q=S^1$.

Following~\cite{CHO-reducedSH}, parts (b) and (c) of Corollary~\ref{cor:loop} can be generalized by replacing $D^*_qQ$ by $D^*_ZQ$, the unit conormal bundle of a closed oriented submanifold $Z\subset Q$, and $\Omega Q$ by $\Om_ZQ$, the space of paths in $Q$ with endpoints on $Z$. 

\begin{corollary}\label{cor:loop-Z}
Let $Q$ be a closed oriented $n$-dimensional manifold and $Z\subset Q$ a closed oriented $d$-dimensional submanifold. 

(a) Assume $d\leq n/2$ and, in addition, $Z$ is a point if $n=2$ and $Z$ is a point or a circle if $n=4$. Then $H_*(\Om_ZQ)$ is a unital infinitesimal anti-symmetric bialgebra. 

(b) The Rabinowitz homologies $\wh\H_*(\Lambda Q)$ and $\wh H_*(\Omega_ZQ)$ fit into a graded 2D open-closed TQFT such that $\boldzeta\boldzeta^*=0$. \qed 
\end{corollary}

\begin{proof}
By~\cite{Abbondandolo-Portaluri-Schwarz} we have $S\H_*(D^*_ZQ)=H_{*+d}(\Omega_Z Q)$. The Lagrangian $D^*_ZQ$ is strongly essential if and only if $d\le n/2$, and part~(a) then follows from Theorem~\ref{thm:ccuias}(b). Part~(b) follows from Theorem~\ref{thm:open-closed-RFH}. 
\end{proof}

\begin{remark}
The graded 2D open-closed TQFT structure in Corollary~\ref{cor:loop-Z}(b) does not necessarily restrict to such a structure on the energy zero (constant path) sector. The reason is that on the energy zero sector all spaces are finite dimensional, so the Cardy condition also contains the requirement $\boldmu_A\boldlambda_A=0$, which is equivalent to the unit sphere conormal bundle $S^*_ZQ$ having vanishing Euler characteristic. In view of the product formula $\chi(S^*_ZQ)=\chi(S^{n-d-1})\chi(Z)$, this is equivalent to $n-d$ even or $\chi(Z)=0$. The same product formula shows that $\boldmu_A\boldlambda_A=0$ with coefficients in a field of characteristic $2$.
\end{remark}

\section{Loop space homology of odd-dimensional spheres}\label{sec:spheres}

In this section we illustrate the algebraic structures of this paper arising from loop spaces of odd-dimensional spheres $S^n$. 
Here we present only the results, referring to~\cite{CHO-PD,CHO-MorseFloerGH} for their derivation. 
For the free loop space $\Lambda S^n$, we denote by 
$$
   \H_*\Lambda S^n = H_{n+*}\Lambda S^n\quad\text{and}\quad
   \wh{\H}_*\Lambda S^n = \wh{H}_{n+*}\Lambda S^n
$$
the degree shifted loop homology and Rabinowitz loop homology, respectively. 
For the based loop space $\Om S^n$, we denote by
$$
   H_*\Om S^n \quad\text{and}\quad
   \wh{H}_*\Om S^n 
$$
the based loop homology and based Rabinowitz loop homology, respectively. 
We need to distinguish the cases $n\geq 3$ and $n=1$. 
\medskip

{\bf The case of odd $n\geq 3$. } 

{\it Rabinowitz loop homology $\wh{\H}_*\Lambda S^n$. }
As a ring with respect to $\boldmu$, the Rabinowitz loop homology is given by
$$
   \wh{\H}_*\Lambda S^n = \Lambda[A,U,U^{-1}],\qquad |U|=n-1,\ |A|=-n.
$$
Here $\Lambda$ denotes the exterior algebra and we denote generators in homology by capital letters. 
Thus $\boldmu$ has degree $0$, it is associative, commutative, and unital with unit $\boldeta=\bold1$.  
(We write $\bold1$ in boldface to distinguish it from the identity map $1$.)
Note that $\wh{\H}_*\Lambda S^n$ is finite dimensional in each degree and, as such, it is discrete. 
The coproduct is given by
\begin{align*}
   \boldlambda(AU^k) &= \sum_{i+j=k-1} AU^i\otimes AU^j,\cr
   \boldlambda(U^k) &= \sum_{i+j=k-1} (AU^i\otimes U^j-U^i\otimes AU^j).
\end{align*}
Note that $\boldlambda$ takes values in $\wh{\H}_*\Lambda S^n\otimes^! \wh{\H}_*\Lambda S^n$, see also Remark~\ref{rmk:discrete-infinite-degrees}. 
The coproduct $\boldlambda$ has odd degree $1-2n$, it is
coassociative, cocommuta\-tive, 
and counital with counit 
$$
  \boldeps(U^k)=0,\qquad \boldeps(AU^k)=\begin{cases}
  1, & k=-1,\cr 0, & \text{else}. \end{cases}
$$
One can verify by direct computation that $(\wh{\H}_*\Lambda S^n,\boldmu,\boldlambda,\boldeta,\boldeps)$ is a biunital coFrobenius bialgebra. 
The copairing is
$$
   \boldc = \boldlambda(\bold1) = \sum_{i+j=-1} (AU^i\otimes U^j-U^i\otimes AU^j),
$$
and the pairing $\boldp=-\boldeps\boldmu$ is given by
\begin{align*}
  \boldp(U^i\otimes U^j) = 0, \qquad \boldp (AU^i\otimes&  AU^j) = 0,\cr
  \boldp(AU^i\otimes U^j) = \boldp(U^i\otimes AU^j) &= \begin{cases}
  -1, & i+j=-1,\cr 0, & \text{else}. \end{cases}
\end{align*}
Denoting by $\{(U^k)^\vee, (AU^k)^\vee\, : \, k\in \Z\}$ the basis dual to the basis $\{U^k,AU^k \, : \, k\in\Z\}$, the Poincar\'e duality isomorphism is given by$$
  \vec\boldp(U^i) = -(AU^{-i-1})^\vee,\qquad
  \vec\boldp(AU^i) = -(U^{-i-1})^\vee.
$$

{\it Loop homology $\H_*\Lambda S^n$. }
As a ring with respect to the loop product $\boldmu$, ordinary loop homology is given by
$$
   \H_*\Lambda S^n = \Lambda[A,U],\qquad |U|=n-1,\ |A|=-n.
$$
Thus $\boldmu$ has degree $0$, it is associative, commutative, and unital with unit $\boldeta=\bold1$.  
The loop coproduct is given by
\begin{align*}
   \boldlambda(AU^k) &= \sum_{i+j=k-1\atop i,j\geq 0} AU^i\otimes AU^j,\cr
   \boldlambda(U^k) &= \sum_{i+j=k-1\atop i,j\geq 0} (AU^i\otimes U^j-U^i\otimes AU^j).
\end{align*}
Note that $\boldlambda$ takes values in the algebraic tensor product. 
The coproduct $\boldlambda$ has odd degree $1-2n$, it is coassociative and cocommutative, but it has no counit. 
One can verify by direct computation that $(\H_*\Lambda S^n,\boldmu,\boldlambda,\boldeta)$ is a unital infinitesimal anti-symmetric bialgebra.
Since
$$
   \boldlambda\boldeta = \boldlambda(\bold1) = 0,
$$
the {\sc (unital infinitesimal relation)} simplifies to Sullivan's relation
$$
  \boldlambda\boldmu = (1\otimes\boldmu)(\boldlambda\otimes 1) + (\boldmu\otimes 1)(1\otimes\boldlambda).
$$

{\it Based Rabinowitz loop homology $\wh{H}_*\Om S^n$. }
As a ring with respect to $\boldmu$, the based Rabinowitz loop homology is given by
$$
   \wh{H}_*\Om S^n = \Lambda[U,U^{-1}],\qquad |U|=n-1.
$$
Thus $\boldmu$ has degree $0$, it is associative, commutative, and unital with unit $\boldeta=\bold1$.  
Note that $\wh{H}_*\Om S^n$ is of finite type. 
The coproduct is given by
\begin{align*}
   \boldlambda(U^k) &= \sum_{i+j=k-1} U^i\otimes U^j.
\end{align*}
Note that $\boldlambda$ takes values in $\wh{H}_*\Om S^n\otimes^! \wh{H}_*\Om S^n$.
The coproduct $\boldlambda$ has even degree $1-n$, it is coassociative, cocommutative, and counital with counit 
$$
  \boldeps(U^k) = \begin{cases}
  1, & k=-1,\cr 0, & \text{else}. \end{cases}
$$
One can verify by direct computation that $(\wh{H}_*\Om S^n,\boldmu,\boldlambda,\boldeta,\boldeps)$ is a biunital coFrobenius bialgebra. 
The copairing is
$$
   \boldc = \boldlambda(\bold1) = \sum_{i+j=-1} U^i\otimes U^j,
$$
and the pairing $\boldp=\boldeps\boldmu$ is given by
\begin{align*}
  \boldp(U^i\otimes U^j) &= \begin{cases}
  1, & i+j=-1,\cr 0, & \text{else}. \end{cases}
\end{align*}
Denoting by $\{(U^k)^\vee\, : \, k\in\Z\}$ the basis dual to the basis $\{U^k\, : \, k\in\Z\}$, the Poincar\'e duality isomorphism is given by
$$
  \vec\boldp(U^i) = (U^{-i-1})^\vee.
$$

{\it Based loop homology $H_*\Om S^n$. }
As a ring with respect to the Pontrjagin product $\boldmu$, ordinary based loop homology is given by
$$
   H_*\Om S^n = \Lambda[U],\qquad |U|=n-1.
$$
Thus $\boldmu$ has degree $0$, it is associative, commutative, and unital with unit $\boldeta=\bold1$.  
The based loop coproduct is given by
\begin{align*}
   \boldlambda(U^k) &= \sum_{i+j=k-1\atop i,j\geq 0} U^i\otimes U^j.
\end{align*}
Here $\boldlambda$ takes values in the 
algebraic tensor product.
The coproduct $\boldlambda$ has even degree $1-n$, it is coassociative and cocommutative, but it has no counit. 
One can verify by direct computation that $(H_*\Om S^n,\boldmu,\boldlambda,\boldeta)$ is a unital infinitesimal anti-symmetric bialgebra. 
Since
$$
   \boldlambda\boldeta = \boldlambda(\bold1) = 0,
$$
the {\sc (unital infinitesimal relation)} simplifies to Sullivan's relation
$$
  \boldlambda\boldmu = (1\otimes\boldmu)(\boldlambda\otimes 1) + (\boldmu\otimes 1)(1\otimes\boldlambda).
$$

{\it Open-closed TQFT structure on Rabinowitz (based) loop homology}.
We illustrate Theorem~\ref{thm:open-closed-RFH} with closed sector $\cC=\wh{\H}_*\Lambda S^n$ and open sector $\cA=\wh{H}_*\Om S^n$. We label the previous generators $U$ as $U_\cC$ in $\cC$, and $U_\cA$ in $\cA$. 
The zipper 
$$
\boldzeta: \wh{\H}_*\Lambda S^n\to \wh{H}_*\Om S^n
$$ 
acts by 
$$
\boldzeta(AU_\cC^k)=0,\qquad \boldzeta(U_\cC^k)=U_\cA^k, \ k\in\Z.
$$
This map has degree $0$ and is an algebra map. It can be understood geometrically as an extension to Rabinowitz loop homologies of the topological shriek map $\H_*\Lambda S^n\to H_*\Omega S^n$ induced by the inclusion $\Omega S^n\hookrightarrow \Lambda S^n$, see also~\cite{CHO-PD}. 
The cozipper 
$$
\boldzeta^*: \wh{H}_*\Om S^n\to \wh{\H}_{*-n}\Lambda S^n
$$
acts by
$$
\boldzeta^*U_\cA^k=AU_\cC^k,\ k\in\Z.
$$
This map has degree $-n$ and is a coalgebra map. In this particular case we have $|\boldzeta^*|=-n$ and $|\boldlambda_\cA|=1-n$, so the compatibility with the coproducts writes $(\boldzeta^*\otimes\boldzeta^*)\boldlambda_\cA=\boldlambda_\cC\boldzeta^*$. The cozipper can be understood geometrically as an extension to Rabinowitz loop homology of the pushforward map $H_*\Omega S^n\to H_*\Lambda S^n$ induced in homology by the inclusion $\Omega S^n\hookrightarrow \Lambda S^n$, see also~\cite{CHO-PD}. 
The verification of the compatibility conditions in Definition~\ref{defi:gradedTQFT} is straightforward. 

\bigskip 

{\bf The case $n=1$. } 

{\it Rabinowitz loop homology $\wh{\H}_*\Lambda S^1$. }
As a biunital coFrobenius bialgebra, the Rabinowitz loop homology of $S^1$ is a direct sum (see Remark~\ref{rem:direct-sum})
$$
   \wh{\H}_*\Lambda S^1 = \Lambda[A_+,U_+,U_+^{-1}]]\oplus \Lambda[A_-,U_-^{-1},U_-]],\quad |U_\pm|=0,\ |A_\pm|=-1.
$$
Here the two summands correspond to the two connected components of the unit sphere cotangent bundle $S^*S^1=\{+1,-1\}\times S^1$, and our convention is chosen such that a term $U_\pm^k$ has winding number $k\in\Z$ around the circle. Since both summands are identical under the transformation $U_+\to U_-^{-1}$, let us describe one of them, dropping the subscript $\pm$. The graded Tate vector space
$$
   \Lambda[A,U,U^{-1}]],\qquad |U|=0,\ |A|=-1
$$
consists of Laurent series
$$
   \sum_{i=-\infty}^N(a_iU^i+b_iAU^i),\qquad a_i,b_i\in \bk,\ N\in\N
$$
with product given by the usual multiplication. Thus $\boldmu$ has degree $0$, it is associative, commutative, and unital with unit $\boldeta=\bold1$.  
Note that this graded Tate vector space is supported in degrees $0$ and $-1$ and is infinite dimensional.  
The coproduct is given by
\begin{align*}
   \boldlambda(AU^k) &= \sum_{i+j=k} AU^i\otimes AU^j,\cr
   \boldlambda(U^k) &= \sum_{i+j=k} (AU^i\otimes U^j-U^i\otimes AU^j).
\end{align*}
Note that $\boldlambda$ takes values in the ! tensor product.  
Note also the similarity to the coproduct on $\wh{\H}_*\Lambda S^n$ for $n\geq 3$ odd, with the difference that the condition $i+j=k-1$ in the sums now becomes $i+j=k$. 
The coproduct $\boldlambda$ has odd degree $-1$, it is coassociative, cocommutative, and counital with counit 
$$
  \boldeps(U^k)=0,\qquad \boldeps(AU^k)=\begin{cases}
  1, & k=0,\cr 0, & \text{else}. \end{cases}
$$
One can verify by direct computation that $(\wh{\H}_*\Lambda S^1,\boldmu,\boldlambda,\boldeta,\boldeps)$ is a biunital coFrobenius bialgebra. 
The copairing is
$$
   \boldc = \boldlambda(\bold1) = \sum_{i+j=0} (AU^i\otimes U^j-U^i\otimes AU^j),
$$
and the pairing $\boldp=-\boldeps\boldmu$ is given by
\begin{align*}
  \boldp(U^i\otimes U^j) = 0,\qquad \boldp(AU^i\otimes & AU^j) = 0,\cr
  \boldp(AU^i\otimes U^j) = \boldp(U^i\otimes AU^j) &= \begin{cases}
  -1, & i+j=0,\cr 0, & \text{else}. \end{cases}
\end{align*}
Denoting by $\{(U^k)^\vee, (AU^k)^\vee \, : \, k\in\Z\}$ the basis dual to the basis $\{U^k,AU^k\, : \, k\in\Z\}$, 
the Poincar\'e duality isomorphism is given by
$$
  \vec\boldp(U^i) = -(AU^{-i})^\vee,\qquad
  \vec\boldp(AU^i) = -(U^{-i})^\vee.
$$

{\it Loop homology $\H_*\Lambda S^1$. }
As a ring with respect to the loop product $\boldmu$, ordinary loop homology is given by
$$
   \H_*\Lambda S^1 = \Lambda[A,U,U^{-1}],\qquad |U|=0,\ |A|=-1.
$$
Thus $\boldmu$ has degree $0$, it is associative, commutative, and unital with unit $\boldeta=\bold1$.  

The loop coproduct in this case depends on the choice of a nowhere vanishing vector field on $S^1$. 
Up to homotopy there are two such choices $v^\pm(x) = \pm 1$, giving rise to two coproducts 
\begin{align*}
  \boldlambda_+(AU^k) &= \begin{cases}
     \sum_{i=0}^{k}AU^i\otimes AU^{k-i} & k\geq 0, \\
     -\sum_{i=k+1}^{-1}AU^i\otimes AU^{k-i} & k<0
  \end{cases} \cr 
  \boldlambda_+(U^k) &= \begin{cases}
     \sum_{i=0}^{k}(AU^i\otimes U^{k-i} - U^i\otimes AU^{k-i}) & k\geq 0, \\
     -\sum_{i=k+1}^{-1}(AU^i\otimes U^{k-i} - U^i\otimes AU^{k-i}) & k<0
  \end{cases}  
\end{align*}
\begin{align*}
  \boldlambda_-(AU^k) &= \begin{cases}
     \sum_{i=1}^{k-1}AU^i\otimes AU^{k-i} & k > 0, \\
     -\sum_{i=k}^{0}AU^i\otimes AU^{k-i} & k\leq 0,
  \end{cases} \cr 
  \boldlambda_-(U^k) &= \begin{cases}
     \sum_{i=1}^{k-1}(AU^i\otimes U^{k-i} - U^i\otimes AU^{k-i}) & k> 0, \\
     -\sum_{i=k}^{0}(AU^i\otimes U^{k-i} - U^i\otimes AU^{k-i}) & k\leq 0.
  \end{cases}  
\end{align*}
Here $\boldlambda_\pm$ takes values in the algebraic tensor product.
The coproduct $\boldlambda_\pm$ has odd degree $-1$, it is coassociative and also cocommutative, but it has no counit. 
One can verify by direct computation that $(\H_*\Lambda S^1,\boldmu,\boldlambda_\pm,\boldeta)$ is a unital infinitesimal anti-symmetric bialgebra. 
Since
$$
   \boldlambda_\pm\boldeta = \boldlambda_\pm(\bold1) = \pm(A\otimes \bold1-\bold1\otimes A),
$$
the {\sc (unital infinitesimal relation)} and {\sc (unital anti-symm\-etry)} contain nontrivial terms involving $\boldlambda_\pm(\bold1)$. 

{\it Based Rabinowitz loop homology $\wh{H}_*\Om S^1$. }
As a biunital coFrobenius bialgebra, the based Rabinowitz loop homology of $S^1$ is a direct sum
$$
   \wh{H}_*\Om S^1 = \Lambda[U_+,U_+^{-1}]]\oplus \Lambda[U_-^{-1},U_-]],\qquad |U_\pm|=0.
$$
Let us again describe one summand, dropping the subscript $\pm$, 
$$
   \Lambda[U,U^{-1}]],\qquad |U|=0
$$
with product given by the usual multiplication. Thus $\boldmu$ has degree $0$, it is associative, commutative, and unital with unit $\boldeta=\bold1$.  
This graded Tate vector space is supported in degree 0 and coincides with the vector space of Laurent power series in $t=U^{-1}$ from Example~\ref{ex:Tate}. 
The coproduct is given by
\begin{align*}
   \boldlambda(U^k) &= \sum_{i+j=k} U^i\otimes U^j,
\end{align*}
taking values in $\wh{H}_*\Om S^1\otimes^! \wh{H}_*\Om S^1$. 
The coproduct $\boldlambda$ has even degree $0$, it is coassociative, cocommutative, and counital with counit
$$
  \boldeps(U^k)=\begin{cases}
  1, & k=0,\cr 0, & \text{else}. \end{cases}
$$
One can verify by direct computation that $(\wh{H}_*\Om S^1,\boldmu,\boldlambda,\boldeta,\boldeps)$ is a biunital coFrobenius bialgebra. 
The copairing is
$$
   \boldc = \boldlambda(\bold1) = \sum_{i+j=0} (U^i\otimes U^j),
$$
and the pairing $\boldp=\boldeps\boldmu$ is given by
\begin{align*}
  \boldp(U^i\otimes U^j) &= \begin{cases}
  1, & i+j=0,\cr 0, & \text{else}. \end{cases}
\end{align*}
Denoting by $\{(U^k)^\vee\, : \, k\in\Z\}$ the basis dual to the basis $\{U^k\, : \, k\in\Z\}$, 
the Poincar\'e duality isomorphism is given by
$$
  \vec\boldp(U^i) = (U^{-i})^\vee.
$$

{\it Based loop homology $H_*\Om S^1$. }
As a ring with respect to the Pontrjagin product $\boldmu$, ordinary based loop homology of $S^1$ is given by
$$
  H_*\Om S^1 = \Lambda[U,U^{-1}],\qquad |U|=0.
$$
Thus $\boldmu$ has degree $0$, it is associative, commutative, and unital with unit $\boldeta=\bold1$.  

Again, the two choices of nowhere vanishing vector fields $v^\pm(x) = \pm 1$ give rise to two coproducts 
\begin{align*}
  \boldlambda_+(U^k) &= \begin{cases}
     \sum_{i=0}^{k}U^i\otimes U^{k-i} & k\geq 0, \\
     -\sum_{i=k+1}^{-1}U^i\otimes U^{k-i} & k<0,
  \end{cases} \cr 
  \boldlambda_-(U^k) &= \begin{cases}
     \sum_{i=1}^{k-1}U^i\otimes U^{k-i} & k > 0, \\
     -\sum_{i=k}^{0}U^i\otimes U^{k-i} & k\leq 0
  \end{cases} 
\end{align*}
taking values in the algebraic tensor product. 
The coproduct $\boldlambda_\pm$ has even degree $0$, it is coassociative and cocommutative, but it has no counit. 
One can verify by direct computation that $(H_*\Om S^1,\boldmu,\boldlambda_\pm,\boldeta)$ is a unital infinitesimal anti-symmetric bialgebra. 
Since
$$
   \boldlambda_\pm\boldeta = \boldlambda_\pm(\bold1) = \pm(\bold1\otimes \bold1),
$$
the {\sc (unital infinitesimal relation)} simplifies to
$$
  \boldlambda_\pm\boldmu = (1\otimes\boldmu)(\boldlambda_\pm\otimes 1) + (\boldmu\otimes 1)(1\otimes\boldlambda_\pm) \mp \bold1\otimes \bold1.
$$
Thus $(H_*\Om S^1,\boldmu,\boldlambda_+,\boldeta)$ 
and $(H_*\Om S^1,\boldmu,-\boldlambda_-,\boldeta)$
are ``unital infinitesimal bialgebras'' in the sense of Loday--Ronco~\cite{Loday-Ronco}.

{\it Open-closed TQFT structure on Rabinowitz (based) loop homology of the circle}. We indicate the zipper and cozipper maps which turn the pair $\cC=\wh{\H}_*\Lambda S^1$ and $\cA=\wh{H}_*\Om S^1$ into an open-closed TQFT. 
We label the previous generators $U_\pm$ as $U_{\pm,\cC}$ in $\cC$, and $U_{\pm,\cA}$ in $\cA$. The zipper 
$$
\boldzeta: \wh{\H}_*\Lambda S^1\to \wh{H}_*\Om S^1
$$ 
acts by 
$$
\boldzeta(A_\pm U_{\pm,\cC}^k)=0,\qquad \boldzeta(U_{\pm,\cC}^k)=U_{\pm,\cA}^k, \ k\in\Z.
$$
%
%
%
The cozipper 
$$
\boldzeta^*: \wh{H}_*\Om S^1\to \wh{\H}_{*-1}\Lambda S^1
$$
acts by
$$
\boldzeta^*U_{\pm,\cA}^k=A_\pm U_{\pm,\cC}^k,\ k\in\Z.
$$

As in the case of spheres of odd dimension $n\ge 3$, these maps can be understood geometrically as extensions to Rabinowitz loop homology of the shriek map, resp.~the pushforward map, induced in homology by the inclusion $\Omega S^1\hookrightarrow \Lambda S^1$. The verification of the compatibility conditions in Definition~\ref{defi:gradedTQFT} is straightforward.

\begin{remark}\label{rem:involutivity-spheres}
(i) The structures on loop homology $\H_*\Lambda S^n$ and based loop homology $H_*\Om S^n$ do {\em not} satisfy the {\sc (unital coFrobenius)} relation for any odd $n\geq 1$. For example, for $n\geq 3$ we have $\boldc=\boldlambda(\bold1) = 0$ but $\boldlambda\neq 0$.

(ii) In this section all products are commutative and all coproducts are cocommutative. On 
loop homology $\H_*\Lambda S^n$, the product and coproduct have opposite parity, in particular the structure is involutive (cf.~Remark~\ref{rem:involutivity}). On 
based loop homology $H_*\Om S^n$, the product and coproduct have the same parity and the structure is not involutive.  
\end{remark}

\appendix

\section{Multilinear operations and gradings}\label{app:gradings}

Fix a discrete field $\bk$. 
Consider $\Z_2$-graded linearly topologized $\bk$-vector spaces $A,B,C$ with multilinear operations
$$
  m_A:A^{\otimes^* k}\to A^{\otimes^!\ell},\qquad m_B:B^{\otimes^* k}\to B^{\otimes^!\ell},\qquad m_C:C^{\otimes^* k}\to C^{\otimes^!\ell}
$$
and linear maps
$$
  (A,m_A) \stackrel{\phi}\longrightarrow (B,m_B) \stackrel{\psi}\longrightarrow (C,m_C).
$$
We wish to define a notion of compatibility of maps with operations of the form
$$
  \phi^{\otimes\ell}m_A = (-1)^{\eps(|\phi|,|m_A|)}m_B\phi^{\otimes k},
$$
for some map $\eps:\Z_2\oplus\Z_2\to\Z_2$, which is preserved under composition. This means that we must also have
$$
  \psi^{\otimes\ell}m_B = (-1)^{\eps(|\psi|,|m_B|)}m_C\psi^{\otimes k}
$$
and
$$
  (\psi\phi)^{\otimes\ell}m_A = (-1)^{\eps(|\phi|+|\psi|,|m_A|)}m_C(\psi\phi)^{\otimes k}.
$$
The left had side of this equation is
$$
  (\psi\phi)^{\otimes\ell}m_A = (-1)^{\frac{\ell(\ell-1)}{2}|\phi||\psi|}\psi^{\otimes\ell}\phi^{\otimes\ell}m_A,
$$
and using the first two equations the term $m_C(\psi\phi)^{\otimes k}$ on the right hand side becomes
\begin{align*}
  m_C(\psi\phi)^{\otimes k}
  &= (-1)^{\frac{k(k-1)}{2}|\phi||\psi|}m_C\psi^{\otimes k}\phi^{\otimes k} \cr
  &= (-1)^{\frac{k(k-1)}{2}|\phi||\psi|+\eps(|\psi|,|m_B|)}\psi^{\otimes k}m_B\phi^{\otimes k} \cr
  &= (-1)^{\frac{k(k-1)}{2}|\phi||\psi|+\eps(|\psi|,|m_B|)+\eps(|\phi|,|m_A|)}\psi^{\otimes k}\phi^{\otimes k}m_A.
\end{align*}
So both sides are equal iff (mod $2$) we have
$$
  \eps(|\phi|+|\psi|,|m_A|) \equiv \eps(|\phi|,|m_A|) + \eps(|\psi|,|m_B|) + \frac{k(k-1)+\ell(\ell-1)}{2}|\phi||\psi|.
$$
Using $|m_B|\equiv |m_A|+(k+\ell)|\phi|$ and setting $x=|\phi|$, $y=|\psi|$, $z=|m_A|$, $a=k+\ell$, $b=\frac{k(k-1)+\ell(\ell-1)}{2}$ this becomes
$$
  \eps(x+y,z) \equiv \eps(x,z) + \eps(y,ax+z) + bxy.
$$
For $(x,y)=(0,0)$ this implies $\eps(0,z)=0$. With this, the equations for $(x,y)=(1,0)$ and for $(x,y)=(0,1)$ automatically hold, whereas the equation for $(x,y)=(1,1)$ becomes
$$
   \eps(1,z+a) \equiv \eps(1,z)+b.
$$
If $a\equiv b\equiv 0$ we can choose $\eps(1,z)$ arbitrarily, e.g.~$\eps(x,z)=0$ does the job. If $a\equiv 1$ the equation becomes $\eps(1,z+1)\equiv\eps(1,z)+b$, so we must have $\eps(1,z)=bz$ and $\eps(x,z)=bxz$. If $a\equiv 0$ and $b\equiv 1$ we do not get a solution; this occurs whenever $k+\ell\equiv 2$ mod $4$, e.g.~in the case $k=2$, $\ell=0$ of a pairing. 

{\bf Conclusion. }
For products $\mu$ or coproducts $\lambda$ we have $a\equiv b\equiv 1$, so we could define compatibility as in Definition~\ref{defi:compatibility_with_products} by
$$
  \phi\mu_A = (-1)^{|\phi||\mu_A|}\mu_B\phi^{\otimes 2},\qquad
  \phi^{\otimes 2}\lambda_A = (-1)^{|\phi||\lambda_A|}\lambda_B\phi.
$$
Applying this to the shift map $s:A\to A[1]$ would lead to
$$
   \ol\mu_A = (-1)^{|\mu_A|}s\mu_A(s\otimes s)^{-1},\qquad
   \ol\lambda_A = (-1)^{|\lambda_A|}(s\otimes s)\lambda_As^{-1},
$$
making compatibility with products/coproducts invariant under shifts.
For pairings or copairings there is no notion of compatibility which is invariant under composition.

\bibliographystyle{abbrv}
\bibliography{000_SHpair}

\end{document}